\documentclass[a4paper,11pt]{amsart}

\usepackage{amsfonts}
\usepackage{amsthm}
\usepackage{amsmath}
\usepackage{amssymb}
\usepackage[mathscr]{euscript}
\usepackage{enumerate}
\usepackage{verbatim} 
\usepackage{mathtools}
\usepackage{url}
\usepackage{tikz-cd}
\usepackage{xypic}
\usepackage{todonotes}
\usepackage[margin=20mm]{geometry}
\usepackage[pagebackref, colorlinks=true, linkcolor=blue, citecolor = red]{hyperref}

\usepackage[nameinlink,capitalise,noabbrev]{cleveref}
\crefname{subsection}{Subsection}{Subsection}
\crefname{equation}{Diagram}{Diagram}

\renewcommand{\eqref}[1]{Equation \ref{#1}}

 \renewcommand*{\backref}[1]{}
 \renewcommand*{\backrefalt}[4]{({%
     \ifcase #1 Not cited.%
           \or On p.~#2%
           \else On pp.~#2%
     \fi%
     })}

\theoremstyle{definition} 
\newtheorem{defn}[equation]{Definition}
\newtheorem{notn}[equation]{Notation}
\newtheorem{prop-def}[equation]{Proposition-Definition}

\theoremstyle{plain} 
\newtheorem{thm}[equation]{Theorem}
\newtheorem{lemma}[equation]{Lemma}
\newtheorem{prop}[equation]{Proposition}
\newtheorem{cor}[equation]{Corollary}
\newtheorem*{thm*}{Theorem}

\theoremstyle{remark} 
\newtheorem{rmk}[equation]{Remark}
\newtheorem{ex}[equation]{Example}

\newtheorem{conj}[equation]{Conjecture}

\numberwithin{equation}{section}

\newtheoremstyle{TheoremNum}
{}{}              
{\itshape}                      
{}                              
{\bfseries}                     
{.}                             
{ }                             
{\thmname{#1}\thmnote{ \bfseries #3}}
\theoremstyle{TheoremNum}
\newtheorem{thmn}{Theorem}
\newtheorem{corn}{Corollary}

\DeclareMathAlphabet{\mathbbe}{U}{bbold}{m}{n}

\def\DDelta{{\mathbbe{\Delta}}}
\newcommand{\DD}{\DDelta}

\newcommand{\A}{\mathscr{A}}
\newcommand{\B}{\mathscr{B}}
\newcommand{\C}{\mathscr{C}}
\newcommand{\D}{\mathscr{D}}
\renewcommand{\cD}{\mathcal{D}}
\newcommand{\F}{\mathscr{F}}

\newcommand{\G}{\mathscr{G}}
\newcommand{\cG}{\mathcal{G}}
\renewcommand{\O}{\mathscr{O}}
\renewcommand{\S}{\mathscr{S}}
\renewcommand{\P}{\mathcal{P}}
\newcommand{\V}{\mathscr{V}}
\newcommand{\bN}{\mathbb{N}}
\newcommand{\bZ}{\mathbb{Z}}
\newcommand{\Map}{\mathrm{Map}}
\newcommand{\Hom}{\mathrm{Hom}}
\newcommand{\Aut}{\mathrm{Aut}}

\newcommand{\Obj}{\mathrm{Obj}}
\newcommand{\Mor}{\mathrm{Mor}}
\newcommand{\Dom}{\mathrm{Dom}}
\newcommand{\Cod}{\mathrm{Cod}}

\newcommand{\Fun}{\mathrm{Fun}}
\newcommand{\Sha}{\mathscr{S}\mathrm{ha}}
\newcommand{\HH}{\mathrm{HH}}
\newcommand{\THH}{\mathrm{THH}}
\newcommand{\HHinfty}{\mathrm{HH}_\infty}
\newcommand{\biN}{\mathrm{biN}}
\newcommand{\biHH}{\mathrm{biHH}}
\newcommand{\biHHinfty}{\mathrm{biHH}_\infty}

\newcommand{\M}{\mathscr{M}}
\newcommand{\N}{\mathscr{N}}

\newcommand{\set}{\mathscr{S}\text{et}}
\newcommand{\cat}{\mathscr{C}\mathrm{at}}
\newcommand{\grph}{\mathscr{G}\mathrm{rph}}
\newcommand{\Grpd}{\mathscr{G}\mathrm{rpd}}

\newcommand{\sSet}{\text{s}\set} 
\newcommand{\Sp}{\mathrm{Sp}}
\newcommand{\Ho}{\mathrm{Ho}}
\newcommand{\Mod}{\mathscr{M}\mathrm{od}}
\newcommand{\Bypass}{\mathscr{B}\mathrm{ypass}}
\newcommand{\Comp}{\mathrm{Comp}}
\newcommand{\ds}{\displaystyle}
\newcommand{\shadow}[1]{\langle\langle #1 \rangle \rangle}
\newcommand{\Cocone}{\mathscr{C}\mathrm{ocone}}
\newcommand{\St}{\mathscr{S}\mathrm{t}}
\newcommand{\Un}{\mathscr{U}\mathrm{n}}
\newcommand{\colim}{\mathrm{colim}}
\newcommand{\coTHH}{\mathrm{co}\THH}
\newcommand{\CSS}{\mathscr{C}\mathscr{S}\mathscr{S}}
\newcommand{\id}{\mathrm{U}}
\newcommand{\rid}{\mathrm{id}}
\newcommand{\tr}{\mathrm{tr}}
\newcommand{\Adj}{\mathscr{A}\mathrm{dj}}
\newcommand{\AdjEq}{\mathscr{A}\mathrm{dj}\mathscr{E}\mathrm{q}}
\newcommand{\End}{\mathscr{E}\mathrm{nd}}
\newcommand{\AdjEnd}{\mathscr{A}\mathrm{dj}\mathscr{E}\mathrm{nd}}
\newcommand{\AdjEqEnd}{\mathscr{A}\mathrm{dj}\mathscr{E}\mathrm{q}\mathscr{E}\mathrm{nd}}
\newcommand{\Mon}{\mathscr{M}\mathrm{on}}
\newcommand{\CoMon}{\mathscr{C}\mathrm{o}\mathscr{M}\mathrm{on}}
\newcommand{\Arr}{\mathscr{A}\mathrm{rr}}
\newcommand{\Iso}{\mathscr{I}\mathrm{so}}
\newcommand{\Fin}{\mathscr{F}\mathrm{in}}
\newcommand{\rcong}{\rotatebox[origin=c]{45}{$\cong$}}

\title{Shadows are Bicategorical Traces}
\date{June 2025}

\author{Kathryn Hess}
\address{{\'E}cole Polytechnique F{\'e}d{\'e}rale de Lausanne, SV BMI UPHESS, Station 8, CH-1015 Lausanne, Switzerland}
\email{kathryn.hess@epfl.ch}

\author{Nima Rasekh}
\address{Institut f{\"u}r Mathematik und Informatik, Universit{\"a}t Greifswald, Greifswald, Germany}
\email{nima.rasekh@uni-greifswald.de}

\subjclass[2020]{18N10, 18N60, 16D90, 19D55, 18D20, 18N65}

\keywords{Hochschild homology, shadows, traces, 2-categories, Morita invariance, $(\infty,2)$-categories}

\begin{document}

\begin{abstract}
  Hochschild homology has proved to be an important invariant in algebra and homotopy theory, in particular due to its relevance in algebraic $K$-theory and fixed point theory, leading to the development of numerous variants of the original construction. Ponto introduced a bicategorical axiomatization of Hochschild homology-type invariants, called a \emph{shadow}, which captures the essential common properties of all known variants of Hochschild homology, such as \emph{Morita invariance}.
  
  In this paper we clarify the relationship between shadows and Hochschild homology. After extending the notion of Hochschild homology to bicategories in a natural manner, we prove the existence of a universal shadow on any bicategory $\B$, taking values in the Hochschild homology of $\B$, through which all other shadows on $\B$ factor. Shadows are thus co-represented by a bicategorical version of Hochschild homology. Using the universal shadow on the free adjunction bicategory, we can then establish a universal Morita invariance theorem, of which all known cases are immediate corollaries. 
  
  Building on this understanding of shadows on bicategories, we propose an $\infty$-categorical generalization of shadows as functors out of Hochschild homology of an $(\infty,2)$-category in the sense of Berman. As a first step towards constructing relevant examples of $\infty$-categorical shadows, we define the Hochschild homology of enriched $\infty$-categorical bimodules and prove that they assemble into a shadow.

  As part of this work we compute the Hochschild homology of several important $2$-categories (such as the free adjunction), which can be of independent interest. 
\end{abstract}

\maketitle

\addtocontents{toc}{\protect\setcounter{tocdepth}{1}}
\tableofcontents

\section{Introduction}
\subsection{From Hochschild homology to shadows}
{Hochschild homology} of associative rings, first studied by Hochschild \cite{hochschild1945hh} and Cartan-Eilenberg \cite{cartaneilenberg1956homologicalalgebra}, generalizes the notion of {K{\"a}hler differentials} from the commutative  to the merely associative setting. Hochschild homology is an important tool for non-commutative geometry \cite{connesmarcolli2008noncommgeometry},  which satisfies interesting properties such as {Morita invariance} \cite{loday1998cyclic}  and which provides a useful approximation to {algebraic $K$-theory} via the {Dennis trace} \cite{dennis1976ktheory,waldhausen1979ktheoryii}. The homotopical analogue of classical Hochschild homology, {topological Hochschild homology} ($\THH$) of ring spectra, admits an analogous Dennis trace map from algebraic $K$-theory \cite{bokstedt1985thh,ekmm1997stablehomotopy}. 

Since the definition of algebraic $K$-theory has been extended to many types of structured categories \cite{waldhausen1985ktheory}, the important connection between $\THH$ and algebraic $K$-theory of ring spectra inspired the definition of similar extensions of (topological) Hochschild homology: to exact categories \cite{mccarthy1994cyclic}, to dg-categories \cite{keller1999cyclichomology}, and to spectral categories \cite{blumbergmandell2012thh}, as a special case of topological Hochschild homology of bimodules over spectral categories. It can be challenging to establish properties, such as Morita invariance, of certain of these variants of Hochschild homology. 

An axiomatic framework that encompasses all of these different settings is therefore worth developing. Ponto provided one such framework, when she introduced the notion of a {\it shadow of bicategories}, as a tool to study fixed-point phenomena \cite{ponto2010shadow}. In \cite{campbellponto2019thh} she and Campbell proved that  topological Hochschild homology of bimodules over spectral categories indeed provides an example of a shadow. They  showed moreover that  Morita invariance of $\THH$ of ring spectra is simply a special case of a more general, abstract Morita invariance of shadows. This observation, combined with the possibility of performing concrete computations with shadows via string diagrams \cite{pontoshulman2013shadow}, provides strong incentive to determine whether the extensions of $\THH$ to other types of structured categories are also shadows, and, if so, how homotopy-coherent they are. The issue of homotopy coherence is the focus of our attention in this article.

\subsection{The rise of homotopy coherence}
Over the past two decades, a rich theory of $\infty$-categories has been developed \cite{joyal2008notes,joyal2008theory,lurie2009htt}, in which, instead of focusing only on homotopy categories, one studies objects of interest in a homotopy-coherent fashion.  The focus on $\infty$-categories has had a significant influence on the study of algebraic $K$-theory and $\THH$. In this framework, algebraic $K$-theory can be described as a functor associating spaces to stable $\infty$-categories of a certain type, which satisfies certain universal properties, giving rise to a new definition of  the Dennis trace to $\THH$ \cite{blumberggepnertabuada2013ktheory,blumberggepnertabuada2014ktheory,bgmn2021ktheory}. The $\infty$-categorical perspective provides new insight into $\THH$ of ring spectra and its cyclotomic structure \cite{nikolausscholze2018tc,nikolausscholze2019tccorrection}. 

As sketched above, the development of various notions of Hochschild homology eventually inspired the notion of a shadow on a bicategory, which captures the essential structure of those diverse constructions. Given that many of these constructions have now been generalized to the $\infty$-categorical framework, it is natural to seek an analogous definition of an $\infty$-categorical shadow. Since shadows were originally defined on bicategories, the $\infty$-categorical analogue should be defined on $(\infty,2)$-categories, which are the higher-categorical analogue of bicategories \cite{bergner2020surveyn}.

\subsection{From shadows back to Hochschild homology} 
To determine how best to generalize shadows from bicategories to $(\infty,2)$-categories, it is helpful to examine more carefully the relationship between shadows and Hochschild homology. In particular, given that shadows are defined in the abstract setting of bicategories, what is the underlying reason why they satisfy Morita invariance, which is often thought of as a concrete property of rings and their categories of modules? At this point, a more tangible description of shadows can be helpful. Concretely, a shadow $\shadow{-}$ on a bicategory $\B$ taking values in a category $\D$ associates to every $1$-endomorphism in $F\colon X \to X$ in $\B$ an object $\shadow{F}$ in $\D$, such that for two $1$-morphisms $F\colon X \to Y$ and $G\colon Y \to X$, there is an isomorphism $\shadow{FG} \cong \shadow{GF}$ in $\D$, along with sufficient functoriality and coherence conditions. See \cref{defn:shadow} for a detailed definition. 

In this article, we clarify the striking and somewhat mysterious relationship between shadows and Hochschild homology, via a careful analysis of the coherence conditions of shadows. We extend the notion of Hochschild homology to bicategories, to an invariant that we denote $\biHH$ and think of as a ``categorification'' of Hochschild homology, and prove the following result relating Hochschild homology of bicategories and shadows. 

\begin{thmn}[\ref{thm:main theorem}]
For any bicategory $\B$ and category $\D$, there is a natural equivalence of categories
 $$\Fun\big(\biHH(\B),\D\big) \xrightarrow{ \ \simeq \ } \Sha(\B,\D).$$
\end{thmn}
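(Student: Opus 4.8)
The plan is to compute both sides of the claimed equivalence in terms of low-dimensional combinatorial data and then to match that data axiom by axiom.

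First, recall that, by Berman's construction, $\biHH(\B)$ is the colimit (formed in the $\infty$-category of categories) of the cyclic bar construction $N^{\mathrm{cyc}}_\bullet\B$ of $\B$, whose category of $n$-simplices is the coproduct, over tuples $a_0,\dots,a_n$ of objects of $\B$, of the product of hom-categories $\B(a_0,a_1)\times\cdots\times\B(a_n,a_0)$, with inner faces given by horizontal composition, the outer face composing the first and last $1$-cells cyclically, and degeneracies inserting identity $1$-cells. Because horizontal composition in the bicategory $\B$ is only weakly associative and unital, $N^{\mathrm{cyc}}_\bullet\B$ is not a strict functor but a pseudofunctor, and $\biHH(\B)$ is correspondingly a pseudocolimit. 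By the universal property of the (pseudo)colimit, $\Fun(\biHH(\B),\D)$ is then naturally equivalent to the category whose objects are cocones under $N^{\mathrm{cyc}}_\bullet\B$ with vertex $\D$ --- that is, pseudonatural transformations from $N^{\mathrm{cyc}}_\bullet\B$ to the constant pseudofunctor at $\D$ --- and whose morphisms are the corresponding modifications.

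Next, I would feed this through the computational description of pseudofunctors out of the truncated simplex category $\DD$ and its variants established earlier, to show that such a cocone is freely determined by a finite packet of data living in degrees $\le 2$: a functor $\shadow{-}\colon\coprod_{a}\B(a,a)=N^{\mathrm{cyc}}_0\B\to\D$; a natural isomorphism $\theta$ identifying the two composites $\shadow{-}\circ d_0$ and $\shadow{-}\circ d_1$ attached to the two faces $d_0,d_1\colon N^{\mathrm{cyc}}_1\B\to N^{\mathrm{cyc}}_0\B$, i.e.\ an isomorphism $\theta_{M,N}\colon\shadow{MN}\xrightarrow{\ \cong\ }\shadow{NM}$ natural in composable $1$-cells $M\colon a\to b$, $N\colon b\to a$; a compatibility of $\theta$ with the degeneracy $N^{\mathrm{cyc}}_0\B\to N^{\mathrm{cyc}}_1\B$; and the threefold coherence constraint extracted from the faces of $N^{\mathrm{cyc}}_2\B$. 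The force of the computational description is precisely that nothing further is imposed in higher degrees.

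The final step is to identify this packet with the definition of a shadow on $\B$ valued in $\D$: $\shadow{-}$ is the underlying functor, $\theta$ is the cyclicity isomorphism, the degeneracy compatibility unwinds --- via the unitors of $\B$, which enter through the pseudofunctor structure --- to the unit axiom, and the degree-$2$ constraint unwinds --- via the associators of $\B$ --- to the associativity/transitivity (hexagon) axiom. This yields a bijection on objects, and running the same comparison on modifications of cocones versus morphisms of shadows promotes it to the asserted equivalence of categories; naturality in $\B$ and in $\D$ is then formal. I expect the main obstacle to be the second step: establishing that a pseudococone under $N^{\mathrm{cyc}}_\bullet\B$ carries exactly the degree-$\le 2$ data and exactly the shadow relations, neither more nor fewer. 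This is where the analysis of pseudofunctors out of the truncated simplex category does the real work, and where the weak associators and unitors of $\B$ must be threaded through the comparison with care.
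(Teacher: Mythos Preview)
Your proposal is correct and follows essentially the same route as the paper: reduce $\Fun(\biHH(\B),\D)$ to the category $\Cocone(\B,\D)$ of pseudococones under the truncated cyclic bar diagram (Proposition~\ref{lemma:cocones are traces}), then use the computad/cofibrant-replacement analysis of pseudofunctors out of $((\DD_{\leq 2})^{op})^{\rhd}$ (Lemma~\ref{lemma:trunc simpl lift}) to show that such a cocone is determined by exactly $(\shadow{-},\theta)$ subject to the unit triangle and the hexagon, i.e., a shadow. The paper packages this identification as an explicit pair of mutually inverse functors $\St$ and $\Un$ and checks the morphism level separately, but the content is the same as what you outline.
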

Here $\biHH(\B)$, which is a category, is the Hochschild homology of the bicategory $\B$ (\cref{def:biHH}), $\Fun (-,-)$ denotes the functor category, and $\Sha$ is the category of shadows on $\B$ taking values in $\D$ (\cref{def:category of shadows}).

Thanks to this theorem, we know why shadows resemble Hochschild homology: they are co-represented by a bicategorical version of Hochschild homology. One important consequence of this co-representation is the existence of a \emph{universal shadow}, which is a key input for many formal properties of shadows.

\begin{corn}[\ref{cor:universal}]
  Let $\B$ be a bicategory. There is a {\it universal shadow} $\shadow{-}_u$ on $\B$ taking values in $\biHH(\B)$ such that for every other shadow  $\shadow{-}$ on $\B$ taking values in a category $\D$, there is a unique functor $F: \biHH(\B) \to \D$ such that $F_*\shadow{-}_u = \shadow{-}$.
 \end{corn}

\subsection{Categorical obstruction theory} \label{subsec:obstruction}
Before we consider the implications of \cref{thm:main theorem}, it is instructive to make explicit the intuition underlying the theorem, which we call \emph{categorical obstruction theory}. Obstruction theory is an important technique in algebraic topology for reducing the proof of the existence of desired maps of topological spaces or spectra (often lifts) to establishing specific properties of certain homotopy or cohomology classes (often their vanishing). There are many prominent examples of such obstruction theory results, such as those exploiting characteristic classes of bundles \cite{milnorstasheff1974charclasses} or Goerss-Hopkins obstruction theory \cite{goersshopkins2004modulispace}. 

Since $\biHH(\B)$ is defined as a pseudo-colimit of categories \cref{eq:thh bicat}, a functor out of $\biHH(\B)$ is by definition a suitably coherent cocone, which in particular is a coherent lift. In \cref{sec:pseudo diag}, we reduce the existence of such lifts to the vanishing of several categorical obstructions, culminating in \cref{thm:trunc simpl lift}. From this perspective, a shadow can be understood as representing the vanishing of a minimal set of obstructions to the existence of a certain  pseudo-cocone. 

\subsection{Morita invariance and the universal Euler characteristic}
Beyond the theoretical implications, our characterization of shadows via functors out of $\biHH(\B)$ has many practical applications, in particular with regard to Morita invariance. In \cite{campbellponto2019thh}, Campbell and Ponto show that for any shadow $\shadow{-}$ on a bicategory $\B$ with values in a category $\D$ and any adjunction diagram $\sigma$
\[ 
  \begin{tikzcd}[row sep=0.5in, column sep=0.5in]
  C \arrow[r, bend left = 40, "F"] \arrow[loop, in=130, out=225, looseness=10, ""{name=D, below}, "GF"]  & 
  D \arrow[l, bend left =40, "G"] \arrow[loop, in=45, out=315, looseness=10, ""{name=U}, "FG"']
  \arrow[from=1-1, to=D, Rightarrow, "u" description]
  \arrow[from=U, to=1-2, Rightarrow, "c" description]
  \end{tikzcd}
\]
in $\B$, the associated \emph{Euler characteristic} $$\chi(\sigma)\colon \shadow{U_C} \xrightarrow{ \shadow{u} } \shadow{GF} \cong \shadow{FG} \xrightarrow{ \shadow{c} } \shadow{U_D}$$ in $\D$, where $\id_C$ and $\id_D$ are the respective identity $1$-cells, is an isomorphism whenever the adjunction is an adjoint equivalence. Applying this result to the bicategory of ring spectra and bimodule spectra and the shadow corresponding to $\THH$, one recovers the classical Morita equivalence, namely if the categories of modules over two ring spectra are equivalent, then their topological Hochschild homology spectra are equivalent. 

We construct here the \emph{universal Euler characteristic}, for which we establish an analogous universal Morita invariance result, which implies all known particular cases. More generally, we introduce the \emph{universal invariance method} (\cref{rem:universal}), which applies not only to the Euler characteristic, but also to other invariants introduced in \cite{campbellponto2019thh}, such as traces, and would even apply more broadly. 

Focusing on the particular case of the Euler characteristic, we recall the existence of the \emph{free adjunction $2$-category} $\Adj$ that co-represents adjunctions in a bicategory $\B$, i.e., every adjunction in $\B$ corresponds to a unique $2$-functor $\Adj \to \B$ \cite{schanuelstreet1986freeadj}. Combining these insights, we define the universal Euler characteristic as a certain morphism $\widehat{\chi}_U$ in $\biHH(\Adj)$ and prove that every other Euler characteristic can be derived from $\widehat{\chi}_U$. 

\begin{thmn}[\ref{thm:chi functor}]
    Let $\B$ be a bicategory. The functor 
     \[ 
         \widehat\chi\colon \Adj(\B)  = \Fun(\Adj,\B) \xrightarrow{ \ \biHH \ } \Fun\big(\biHH(\Adj),\biHH(\B)\big) \xrightarrow{ \ (\widehat \chi_U)^* } \Fun\big([1],\biHH(\B)\big) = \Arr\big(\biHH(\B)\big)
     \]   
    takes an adjunction $\sigma=(C,D,F,G,u,c)$ in $\B$ to its Euler characteristic with respect to the universal shadow $\shadow{-}_u$ on $\B$, 
    $$\widehat \chi(\sigma) \colon \shadow{\id_C}_u \to \shadow{GF}_u \cong \shadow{FG}_u \to \shadow{\id_D}_u$$
    in $\biHH(\B)$. 
   
    In particular, for any shadow $\shadow{-}$ on $\B$ with values in $\D$, the Euler characteristic $\chi(\sigma)$ is equal to $T \circ \widehat\chi(\sigma)$, where $T\colon\biHH(\B) \to \D$ corresponds to $\shadow{-}$ via \cref{thm:main theorem}. 
 \end{thmn}

Having established this universal perspective, we can directly deduce Morita invariance of shadows (\cref{prop:shadow morita invariant}). Beyond recovering known results, the universal Euler characteristic enables us to prove new results that were not approachable with previous methods. After showing that $\biHH(\Adj)$ is equivalent to a slight variant $(\Lambda_\infty)^{\lhd\rhd}$ of the paracyclic category $\Lambda_\infty$, we obtain the following uniqueness result for the universal Euler characteristic. 

\begin{corn}[\ref{cor:euler canonical}]
    The universal Euler characteristic $\widehat{\chi}_U$ is the \emph{unique} morphism from the initial to the terminal object in $\biHH(\Adj) \simeq (\Lambda_\infty)^{\lhd\rhd}$. Hence, the definition of the Euler characteristic is canonical.
\end{corn}

\subsection{A theory of $(\infty,2)$-shadows}
Finally, we use our characterization of shadows on bicategories via Hochschild homology to propose a generalization of shadows to $\infty$-categories. Concretely, in \cref{def:shadow infty} we define a shadow of $\infty$-categories as a functor out of the $\infty$-categorical Hochschild homology of an $(\infty,2)$-category (\cref{ex:infty two cats}). This definition relies fundamentally on the work of Berman, who extended the notion of Hochschild homology to enriched $\infty$-categories \cite{berman2022thh}. 

Building on work of Haugseng on enriched $\infty$-bimodules \cite{haugseng2016bimodules}, we generalize Berman's constructions and define Hochschild homology of enriched bimodules (\cref{def:thh bimod}). We prove that for a suitable enriching $\infty$-category, these assemble into a shadow, as formulated below. 

\begin{thmn}[\ref{thm:thh shadow}] 
  Let $\V$ be a presentably symmetric monoidal $(\infty,1)$-category and $\Mod_\V$ the $(\infty,2)$-category of $\V$-enriched categories and their bimodules (\cref{thm:vbimodules}). There is a functor of $\infty$-categories
  $$\widehat {\HH}_\V\colon \biHHinfty(\Mod_\V) \to \Ho\V$$
  such that the corresponding shadow on the homotopy category of the $(\infty, 2)$-category $\Mod_\V$  (cf. \cref{prop:shadow on infty bicat}) sends any $\C$-bimodule $\M$ to $\HH_\V(\C,\M)$, the $\V$-enriched Hochschild homology of $\C$ with coefficients in $\M$ (\cref{def:thh bimod}).
\end{thmn}

We conjecture that this construction in fact lifts to a proper shadow of $\infty$-categories, if certain technical challenges can be overcome, as further discussed in \cref{conj:main} and \cref{rmk:challenges}. 

\subsection{Future directions}
The results proven here suggest several interesting and natural  next steps.
\begin{enumerate}
    \item \textbf{Tricategorical Shadows:} As discussed in \cref{subsec:obstruction}, a key step in the proof of \cref{thm:main theorem} is an argument in categorical obstruction theory that characterizes pseudo-categorical $2$-truncated simplicial cocones (cf. \cref{sec:pseudo diag}). We venture that categorical obstruction theory can similarly be applied to define and study tricategorical shadows, via a characterization of the obstructions of $3$-truncated simplicial pseudo cocones.
    \item \textbf{coHochschild homology via shadows:} There is a natural definition of coHochschild homology (e.g., of coalgebra spectra \cite{hessshipley2021cothh, bohmann2018cothh,bgs2022cothh}), dual to that of Hochschild homology. Given this duality, there should be an axiomatic, shadow-type approach to coHochschild homology. However, since essentially the only examples of coalgebras in point-set models of spectra are suspension spectra \cite{perouxshipley2019coalgebras},  a strict bicategorical approach to studying coHochschild homology would be of limited interest for spectra, though there is an interesting strictly bicategorical approach in the differential graded context \cite{hessparentscott2009}, \cite{hess2016twisting}. The results here create an opportunity to define $\coTHH$ of a coalgebra spectrum as $\THH$ of its spectral category of comodules. 
    \item \textbf{Functoriality of Hochschild homology:} One current challenge when studying Hochschild homology of enriched $\infty$-categories is that the definition given in \cite{berman2022thh} is not known to be functorial. Having a working functorial construction would, for example, permit us to generalize the Morita invariance of shadows beyond bicategories (\cref{rem:functoriality thh v}). 
    \item \textbf{Hochschild homology as a trace:} A key result in \cite{campbellponto2019thh} is that $\THH$  for bimodules over spectral categories itself is a shadow. In \cref{thm:thh shadow} we generalize this result to a functor of $\infty$-categories valued in $\Ho(\V)$, however, the expectation is that it should lift to an $\infty$-categorical functor
    $$\biHH_\infty(\Mod_V) \to \V,$$
    which should, under the right circumstances, even be $\V$-enriched, as discussed in \cref{conj:main}, if some theoretical challenges regarding enriched $(\infty,2)$-categories can be resolved (\cref{rmk:challenges}).
\end{enumerate}

\subsection{Notation} \label{subsec:notation}
 We use several types of categories throughout this article and thus need to distinguish between them carefully.
 
 A {\it category} is a $(1,1)$-category. A crucial example of a category is the simplex category $\DD$, which has as objects finite linear ordinals $[n]$ (which we also view as categories in the usual way) and as morphisms order-preserving maps (which are also functors). We denote by $\cat$ the large $(2,1)$-category of small categories, i.e., $\cat$ has 
 \begin{itemize}
     \item small categories as objects,
     \item functors as morphisms, and
     \item natural isomorphisms as $2$-morphisms.
 \end{itemize}
In particular, the existence of a commuting diagram of functors in $\cat$
 \begin{center}
     \begin{tikzcd}[row sep=0.5in, column sep=0.5in]
      \C_1 \arrow[r, "F_1"] \arrow[d, "F_2"'] & \C_2 \arrow[d, "F_3"] \\
      \C_3 \arrow[r, "F_4"'] & \C_4
     \arrow[from=1-2, to=2-1, Rightarrow, "\rcong" description]
    \end{tikzcd}
 \end{center}
means that there is a natural isomorphism 
 $$\alpha\colon F_3 \circ F_1 \xrightarrow{ \ \cong \ } F_4 \circ F_2.$$
 
 For any two categories $\C,\D$, we let  $\Fun(\C,\D)$ denote the {\it functor category}, whose objects are functors from $\C$ to $\D$, and whose morphisms are natural transformations.
 
 In this article a {\it bicategory} $\B$ is a $(2,2)$-category, composed of
  \begin{itemize}
      \item a class of objects $X,Y,Z,...$,
      \item  a category of morphisms $\B(X,Y)$ for every pair of objects $X,Y$, where the unit object in $\B(X,X)$ is denoted $U_X$, and
      \item natural isomorphisms $a$, $r$ and $l$ that witness associativity, right unitality, and left unitality, respectively and that satisfy certain axioms that the reader can find, for example, in \cite{benabou1967bicat, leinster1998basic}.
  \end{itemize}  
 
Note that $\cat$ is an example of a bicategory. Finally, there is a fully faithful embedding from $(1,1)$-categories into bicategories, which takes a category $\C$ to the bicategory with only identity $2$-morphisms. Throughout, whenever needed, we will abuse notation and not provide explicit notation for this embedding.  

In \cref{sec:traces of infty two categories}, we also use (enriched) $\infty$-categorical and particularly $(\infty,2)$-categorical formalism, which we review at the beginning of that section. We note here that we work with various notions of the ``category of spectra''.  In \cref{sec:shadows}, we consider a monoidal model category of spectra, denoted $\mathcal Sp$, e.g., symmetric spectra or orthogonal spectra \cite{mmss2001spectra}. We denote the monoidal product on $\mathcal Sp$ by $\wedge$. We let $\Sp$ denote the $\infty$-category of spectra and $\Ho(\Sp)$ its homotopy category, which is equivalent to the homotopy category of the model category $\mathcal Sp$.
 
 \subsection{Background}
 We assume only basic familiarity with category theory and a healthy curiosity about $\THH$.
 We review in \cref{sec:shadows} the relevant definitions of  $\THH$ of spectral categories and shadows. Moreover, we review relevant material regarding enriched $\infty$-categories, and the $\infty$-categorical definition of $\THH$ of enriched $\infty$-categories in \cref{subsec:thh of enriched categories}, when we generalize to the $(\infty,2)$-categorical setting. 
 
\subsection{Acknowledgments}
 We would like to thank John Berman for helpful conversations and for clarifying several points in his paper \cite{berman2022thh}. We thank Rune Haugseng as well for helpful discussions regarding his paper \cite{haugseng2016bimodules} and Thomas Nikolaus for helpful discussions and suggestions.  We also express our appreciation to the referee for excellent suggestions for restructuring this article. Finally, the second author is grateful to the Max Planck Institute for Mathematics in Bonn for its hospitality and financial support.
 
\section{Shadows of bicategories}
\label{sec:shadows}
We review here the notion of a shadow, which can be viewed  as an axiomatization of topological Hochschild homology ($\THH$), which was originally defined for ring spectra \cite{bokstedt1985thh}, but then generalized to spectral categories, i.e., categories enriched over a monoidal model category of spectra $\mathcal Sp$ \cite{blumbergmandell2012thh}. Shadows first appeared in \cite{ponto2010shadow}, though \cite{campbellponto2019thh} is our main reference.

 \begin{defn}
   Let $\C$ and $\D$ be spectral categories. 
   \begin{enumerate}
       \item A {\it $\C$-module} is a spectral functor 
       $$\C \to \mathcal Sp.$$
       \item A {\it $(\C,\D)$-bimodule} is a spectral functor $$ \C^{op} \wedge \D \to \mathcal Sp,$$
       where $ \C^{op} \wedge \D$ is the spectral category with as objects ordered pairs $(c,d)$, where $c \in \C$ and $d \in \D$, and with mapping spectrum 
       $$\Map_{\C^{op} \wedge \D}\big((c_1,d_1),(c_2,d_2)\big) = \Map_\C(c_2,c_1) \wedge \Map_\D(d_1,d_2).$$
   \end{enumerate}
 \end{defn}
 
 \begin{defn}[{\cite[Definition 2.5]{campbellponto2019thh}}]
 \label{Def:thh of spectral cat}
  Let $\C$ be a be a pointwise-cofibrant spectral category (i.e., $\C(c,d)$ is a cofibrant spectrum for all objects $c,d$) and $\D$ a $(\C,\C)$-bimodule that is pointwise-cofibrant as a spectral category. The {\it topological Hochschild homology of $\C$ with coefficients in $\D$}, denoted $\THH(\C;\D)$, is  the geometric realization of the cyclic bar construction $N^{cy}_\bullet(\C, \D)$, i.e., the simplicial spectrum that at level $n$ is  
  $$N^{cy}_n(\C, \D) = \bigvee_{c_0,...,c_n} \C(c_0,c_1) \wedge \C(c_1,c_2) \wedge... \wedge \D(c_n,c_0),$$
  with faces given by composition in $\C$ or the action of $\C$ on $\D$ (together with a cyclic permutation for the last face) and degeneracies given by the unit,
i.e.,
  $$\THH(\C;\D) = |N^{cy}_\bullet(\C, \D)|.$$
\end{defn} 

 Ponto's key insight was that this definition could be axiomatized. Concretely, let $\Ho(\Mod)$ be the bicategory with pointwise-cofibrant small spectral categories as objects  and with the morphism category from $\C$ to $\C'$ equal to the homotopy category of $(\C,\C')$-bimodules, $\Ho(\Mod_{(\C,\C')})$.
For every pointwise-cofibrant spectral category $\C$, topological Hochschild homology gives rise to a collection of functors 
 $$\THH\colon \Ho(\Mod)(\C,\C) = \Ho(\Mod_{(\C,\C)}) \to \Ho(\Sp)\colon \D \mapsto \THH (\C; \D),$$
satisfying certain properties, which become the axioms in the definition of a shadow, which we recall now.
 
 \begin{defn}[{\cite[Definition 2.16]{campbellponto2019thh}}]
 \label{defn:shadow}
  Let $\B$ be a bicategory with associator $a$, left unit $l$, right unit $r$, and identity $1$-cells $U_X$, and let  $\D$ be a category. 
  A {\it shadow} on $\B$ with values in $\D$  is a functor 
  $$\shadow{-} \colon \coprod_{X \in \mathrm{Obj}(\B)} \B(X,X) \to \D$$
  that satisfies the following conditions. 
  
  For every pair of 1-morphisms $F\colon X \to Y$ and $G\colon Y \to X$ in $\B$ that are composable in either order, there is a natural isomorphism 
  $$\theta\colon \shadow{FG} \xrightarrow{ \ \cong \ } \shadow{GF}$$
  such that for all $F\colon X \to Y$, $G\colon Y \to Z$, $H\colon Z \to X$, and $K\colon X\to X$ the following diagrams in $\D$ commute up to natural isomorphism.
  
 \begin{equation} \label{eq:shadow cond one}
     \begin{tikzcd}[row sep=0.5in, column sep=0.5in]
     \shadow{H(GF)} \arrow[r, "\theta"] \arrow[d, "\shadow{a}"'] & \shadow{(GF)H} \arrow[r, "\shadow{a}"] & \shadow{G(FH)} \\
     \shadow{(HG)F} \arrow[r, "\theta"] & \shadow{F(HG)} \arrow[r, "\shadow{a}"] & \shadow{(FH)G} \arrow[u, "\theta"'] 
     \end{tikzcd}
 \end{equation}
 
 \begin{equation} \label{eq:shadow cond two}
     \begin{tikzcd}[row sep=0.5in, column sep=0.5in]
      \shadow{KU_X} \arrow[r, "\theta"] \arrow[dr, "\shadow{r}"'] & \shadow{U_XK} \arrow[d, "\shadow{l}"] \arrow[r, "\theta"] & \shadow{KU_X} \arrow[dl, "\shadow{r}"] \\
      & \shadow{K} & 
     \end{tikzcd}
 \end{equation}
 \end{defn}
 
The primary example of a shadow is $\THH$ of spectral categories.
 
 \begin{thm}[{\cite[Theorem 2.17]{campbellponto2019thh}}]\label{thm:cp}
  Topological Hochschild homology of spectral categories is a shadow, i.e., the family of functors
  $$\Big\{ \THH (\C;-)\colon \Ho (\Mod_{(\C,\C)}) \to \Ho(\Sp)\colon \D \mapsto \THH(\C; \D)\mid \C \in \mathrm{Ob}\,\Ho (\Mod)\Big\}$$
  is such that for all pointwise-cofibrant spectral categories $\C$ and $\C'$, $(\C, \C')$-bimodules $\D$, and $(\C', \C)$-bimodules $\D'$, there is a natural isomorphism 
$$\theta\colon \THH(\C, \D\wedge_{\C'} \D')\xrightarrow{\cong}\THH(\C', \D'\wedge_{\C} \D) $$ 
satisfying the conditions of \cref{defn:shadow}.
 \end{thm}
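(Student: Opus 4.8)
The plan is to pass everything through the two-sided bar construction, so that both $\THH(\C;\D\wedge_{\C'}\D')$ and $\THH(\C';\D'\wedge_\C\D)$ become iterated realizations of a single bisimplicial spectrum; then $\theta$ is a Fubini comparison and the shadow axioms become instances of the coherence of iterated geometric realization. Concretely, since all the inputs are pointwise cofibrant, the derived relative smash product representing composition in $\Ho(\Mod)$ is computed by the two-sided bar construction $B_\bullet(\D,\C',\D')$, a simplicial $(\C,\C)$-bimodule, and substituting its levels into \cref{Def:thh of spectral cat} presents $\THH(\C;\D\wedge_{\C'}\D')$ as the realization of the diagonal of the bisimplicial spectrum $X_{\bullet,\bullet}$ whose $(p,q)$-spectrum is
$$
\bigvee \C(c_0,c_1)\wedge\cdots\wedge\C(c_{p-1},c_p)\wedge\D(c_p,d_0)\wedge\C'(d_0,d_1)\wedge\cdots\wedge\C'(d_{q-1},d_q)\wedge\D'(d_q,c_0),
$$
the wedge taken over all tuples $(c_0,\dots,c_p)$ in $\C$ and $(d_0,\dots,d_q)$ in $\C'$, with the $p$-direction the cyclic bar direction over $\C$ and the $q$-direction the two-sided bar direction over $\C'$. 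Read around a circle, $X_{p,q}$ is a ``necklace'' $c_0\to\cdots\to c_p\to d_0\to\cdots\to d_q\to c_0$ whose two distinguished beads $\D$ and $\D'$ separate a $\C$-arc from a $\C'$-arc.

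Next I would extract $\theta$. Realizing $X_{\bullet,\bullet}$ first in the $q$-direction recovers $N^{cy}_\bullet(\C;\D\wedge_{\C'}\D')$, hence, after realizing in $p$, exactly $\THH(\C;\D\wedge_{\C'}\D')$. Realizing first in the $p$-direction instead: for fixed $q$ the spectrum $X_{\bullet,q}$ is the cyclic bar construction of $\C$ with coefficients in the $(\C,\C)$-bimodule $B_q(\D,\C',\D')$, and computing $\THH(\C;-)$ of that bimodule as a derived coend --- colimits commute with coends, and $\int^{c}\D(c,-)\wedge\D'(-,c)$ is by definition the derived composition $\D'\wedge_\C\D$ --- identifies its realization with $N^{cy}_\bullet(\C';\D'\wedge_\C\D)$, so realizing then in $q$ gives $\THH(\C';\D'\wedge_\C\D)$. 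By pointwise cofibrancy $X_{\bullet,\bullet}$ is Reedy cofibrant in each variable, so both iterated realizations compute the realization of the diagonal up to natural weak equivalence; the resulting zig-zag is the sought natural isomorphism $\theta\colon\THH(\C;\D\wedge_{\C'}\D')\xrightarrow{\ \cong\ }\THH(\C';\D'\wedge_\C\D)$ in $\Ho(\Sp)$, which coincides with the Morita-type invariance equivalence for $\THH$ of spectral categories of \cite{blumbergmandell2012thh}.

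It remains to check the conditions of \cref{Def:shadow}. For \cref{eq:shadow cond one} I would run the same construction with three composable bimodules, obtaining a trisimplicial spectrum whose necklace carries three $\C$-arcs separated by the three module beads; each of the six vertices of the hexagon is an iterated realization of this trisimplicial spectrum in one of the six orders, each $\shadow{a}$-edge transposing two adjacent internal realizations (the derived associativity of the iterated bar constructions) and each $\theta$-edge transposing the cyclic realization with an adjacent internal one, so the hexagon commutes because all the comparisons involved are instances of the coherent Fubini isomorphisms for iterated realization. For \cref{eq:shadow cond two} I would specialize to the case where one of the two bimodules is the unit bimodule $U_X=\C$; then the relevant bar constructions $B_\bullet(\D,\C,\C)$ and $B_\bullet(\C,\C,\D)$ are the two bar resolutions of $\D$ over $\C$, each split-augmented onto $\D$ by an extra degeneracy, and both composites in the triangle collapse the corresponding contractible bar direction --- the unitors $l$ and $r$ being precisely these augmentations --- so commutativity reduces to a short diagram chase with the simplicial identities.

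The main obstacle is not any single idea but the coherence bookkeeping. One must pin down a sufficiently rigid point-set model --- the two-sided bar construction with its explicit faces, degeneracies, and extra degeneracies is the natural choice --- verify that with it the Fubini and associativity comparisons are represented by honest maps of spectra and that the associator and unitors of $\Ho(\Mod)$ are carried precisely onto the associativity and augmentation data of the bar constructions, and only then do the verifications of \cref{eq:shadow cond one} and \cref{eq:shadow cond two} reduce to routine (if lengthy) manipulations of simplicial identities and of the standard shuffle maps; this is the step I expect to demand the most care.
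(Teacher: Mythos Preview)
The paper does not prove this statement; it is quoted from \cite[Theorem 2.17]{campbellponto2019thh} as background, and the only place the paper revisits the argument is in the proof of \cref{thm:thh shadow}, where it explicitly says it is adapting the Dennis--Morita--Waldhausen argument of \cite[Proposition 6.2]{blumbergmandell2012thh} used by Campbell and Ponto. Your proposal \emph{is} that argument: the bisimplicial spectrum $X_{\bullet,\bullet}$ you write down is the Dennis--Morita--Waldhausen bisimplicial object, $\theta$ is obtained by realizing in the two orders, the hexagon comes from the trisimplicial version, and the unit triangle from the extra degeneracies of the bar resolution. So your approach matches both the original source and the paper's own adaptation of it in \cref{thm:thh shadow}, and your assessment of where the work lies (the coherence bookkeeping for \cref{eq:shadow cond one} and \cref{eq:shadow cond two}) is accurate --- Campbell and Ponto likewise declare this step ``straightforward'' and the present paper simply refers back to them for it.
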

 
It is common to simplify notation and to write
$$\THH(\C)=\THH(\C;\C),$$
where we consider $\C$ as a bimodule over itself in the obvious way. Note that computing THH of a spectral category yields a spectrum, i.e., an object in the enriching category.
 
The axioms of a shadow suffice to prove interesting properties that are known to hold for $\THH$. For example, Ponto and Campbell prove a general form of Morita equivalence for shadows \cite[Proposition 4.6, Proposition 4.8]{campbellponto2019thh} that implies the classical Morita equivalence for ring spectra \cite[Example 5.10]{campbellponto2019thh}. In \cref{thm:main theorem} we show that these properties stem from a deep connection between shadows and Hochschild homology. This is the content of the next section, after we introduce Hochschild homology of bicategories.
 
\section{Shadows vs. Traces} \label{sec:shadows vs traces}
In the last section we introduced shadows on bicategories, with $\THH$ of spectral categories a primary example thereof. In this section, we demonstrate that this example is not coincidental and, in fact, captures the essence of shadows. To do so, we extend Hochschild homology to bicategories in \cref{subsection:hh of bicategories} and then apply this notion in \cref{subsec:the main result}, to formulate the correspondence between shadows and functors out of bicategorical Hochschild homology, which we call \emph{traces}.  
  
\subsection{Hochschild homology of bicategories} \label{subsection:hh of bicategories}
In this subsection we introduce Hochschild homology of bicategories. As we saw in \cref{Def:thh of spectral cat}, the Hochschild homology of a spectral category is defined as the geometric realization (i.e., a colimit) of a certain simplicial object, namely the cyclic bar construction. Our goal is to define an analogous construction for bicategories, extracting a suitably defined cyclic bar construction out of every bicategory. As a first step, we define the relevant maps. 

\begin{rmk} \label{rmk:product notation}
	In order to simplify notation throughout, for a given bicategory $\B$, we use the following notational convention:
			$$\B(X_0,X_1,...,X_n,X_0) = \B(X_0,X_1) \times \B(X_1,X_2) \times ... \times \B(X_n,X_0).$$
\end{rmk}
	
\begin{notn}
	For any bicategory $\B$ and triple $X_0,X_1,X_2$ of objects of $\B$, the following functors play an important role in the definition of bicategorical Hochschild homology. 
 \begin{enumerate}
		\item $d_0,d_1\colon \B(X_0,X_1,X_0) \to \B(X_i,X_i)$ (where $i=0,1$), given by
\[
d_0(F,G) = FG, \quad
d_1(F,G) = GF.
\]
 \item $d_0,d_1,d_2\colon \B(X_0,X_1,X_2,X_0) \to \B(X_i,X_j)$ (where $ 0 \leq i < j \leq 2$), given by 
\[
d_0(F,G,H)=(FG,H), \quad 
d_1(F,G,H)=(F,GH), \quad
d_2(F,G,H)=(HF,G). 
\]
 \item $s_0\colon \B(X_0,X_0) \to \B(X_0,X_0,X_0)$, given by
\[
s_0(F) = (F,\id_{X_0}).
\]
\item $s_0, s_1\colon  \B(X_0,X_1,X_0) \to \B(X_0,X_1, X_i, X_0)$ (where $i=0,1$), given by 
\[s_0(F,G) = (F,\id_{X_1},G), \quad 
  s_1(F,G) = (F,G,\id_{X_0}).\] 
	\end{enumerate}
\end{notn}

Ideally, we would like these functors to give us a (truncated) simplicial object, which would be the intended cyclic bar construction. Due to the coherences inherent in the definition of a bicategory, however, we obtain instead a diagram in the $(2,1)$-category $\cat$ (\cref{subsec:notation}), as specified in the following lemma. Recall from \cref{subsec:notation} that here we consider $\DD_{\leq 2}$, which is the full subcategory of $\DD$ with objects $[0], [1],$ and $[2]$, as a $2$-category with only identity $2$-morphisms.

\begin{lemma} \label{lemma:bicategorical cyclic bar construction}
 Let $\B$ be a bicategory. There exists a $(2,1)$-functor $\biN^{cy}_\bullet(\B)\colon \DD_{\leq 2}^{op} \to \cat$ of the following form, where the rightwards-pointing arrows are appropriate choices of $s_i$. 
 {\begin{equation} \label{eq:thh bicat}
		\begin{tikzcd}[column sep=0.3in]
			\ds\coprod_{X_0} \B(X_0,X_0) 
			\arrow[r, shorten >=1ex,shorten <=1ex] 
			&
			\ds\coprod_{X_0,X_1} \B(X_0,X_1,X_0) 
			\arrow[l, shift left=1.2, "d_1"] \arrow[l, shift right=1.2, "d_0"'] 
					\arrow[r, shift right, shorten >=1ex,shorten <=1ex ] \arrow[r, shift left, shorten >=1ex,shorten <=1ex] 
			& 
			\ds\coprod_{X_0,X_1,X_2} \B(X_0,X_1,X_2,X_0)
				\arrow[l] \arrow[l, shift left=2, "d_2"] \arrow[l, shift right=2, "d_0"'] 
			\end{tikzcd}
			\end{equation}}			
\end{lemma}

\begin{proof}
 We can directly construct the functor, relying on \cref{lemma:trunc simp diag}. Here the objects and $1$-morphisms are already provided in the statement, the $\alpha_i$ are given by the associators in $\B$, the $\beta_i$ and $\gamma_i$ are given by the unitors in $\B$, and $\delta_0$ is the identity. The required coherence among the 2-cells is provided by the axioms of a bicategory.
	
	Alternatively, we can observe that the construction is a special case of Berman's construction for enriched $\infty$-categories, when the enrichment is given by $\cat$. See \cref{prop:bicat} for more details. 
\end{proof}

We can now define the Hochschild homology of a bicategory.

\begin{defn}\label{def:biHH}
 The \emph{Hochschild homology} of a bicategory $\B$, denoted $\biHH(\B)$, is defined as the colimit of	the functor $\biN^{cy}_\bullet(\B)$ in $\cat$.
\end{defn}
  
 \begin{rmk} \label{rem:thh bicat functorial}
			The construction of $\biN^{cy}_\bullet(\B)$ is evidently functorial in $\B$, and hence so is the construction of $\biHH(\B)$ as its colimit.
\end{rmk}
  
\begin{notn}\label{not:simplicial maps}
  Let $\B$ be a bicategory. For $B_1, B_2$ chosen among the three categories $$\coprod_{X_0}\B(X_0,X_0),\coprod_{X_0,X_1}\B(X_0,X_1,X_0),\coprod_{X_0,X_1,X_2}\B(X_0,X_1,X_2,X_0)$$ we use   $\Fun^\Delta(B_1,B_2)$ to denote the subcategory of $\Fun(B_1,B_2)$ consisting of functors and natural isomorphisms generated by those in \cref{eq:thh bicat}. 
\end{notn}
  
It is important to note that the colimit of  \cref{eq:thh bicat} is computed in the $(2,1)$-category $\cat$, which differs from the $1$-categorical colimit in the $1$-category of categories, but rather corresponds to the \emph{pseudo-colimit} \cite{kelly1989twolimits}. Computing general pseudo-colimits can be quite challenging, although there are certain helpful methods via weighted colimits of enriched categories \cite{kelly2005enriched} and  homotopy colimits of model categories \cite{gambino2008homotopylimits}. 

In general there are very few situations where one can compute $\biHH(\B)$ using formal arguments. We will see one example, when $\B$ is a \emph{bigroupoid} (i.e., all $1$-morphisms and $2$-morphisms are invertible) using abstract homotopy theory, see \cref{ex:thh groupoids}. The example below illustrates that such formal computations cannot generalize even to bicategories with one object.

\begin{ex} \label{ex:thh monoids}
  Let $(\C,\otimes,I)$ be a monoidal category, and let $B\C$ be the bicategory with one object and morphism category given by $\C$, with composition given by the monoidal product.  Interpreting \cref{eq:thh bicat} in this particular case, we see that $\biHH(B\C)$ is the colimit of the diagram 
  \begin{center} 
  \begin{tikzcd}[column sep=0.3in]
 \ds \C 
 \arrow[r, shorten >=1ex,shorten <=1ex] 
 &
 \ds \C \times \C 
 \arrow[l, shift left=1.2, "d_1"] \arrow[l, shift right=1.2, "d_0"'] 
   \arrow[r, shift right, shorten >=1ex,shorten <=1ex ] \arrow[r, shift left, shorten >=1ex,shorten <=1ex] 
 & 
 \ds \C \times \C \times \C
  \arrow[l] \arrow[l, shift left=2, "d_2"] \arrow[l, shift right=2, "d_0"'] 
 \end{tikzcd}.
 \end{center}
Although this diagram is easy to describe explicitly, there is no easy way to compute its colimit, as illustrated in \cref{sec:thh two cat}. In general the only description available of  the category $\biHH(B\C)$ is formulated in terms of  generators and relations \cite[Section II.8]{maclane1998categories}, which is generally not useful for computations.
\end{ex}

In certain cases we can give a more precise description of $\biHH(B\C)$ using very explicit computational arguments, which is the content of \cref{sec:thh two cat}, culminating in \cref{cor:thh two cat on obj}.

\begin{rmk}
 It is straightforward to check that any functor $T\colon\biHH(\B) \to \D$ satisfies a trace-like property, i.e., for any two $1$-morphisms $A\colon X \to Y$ and $B\colon Y \to X$ in $\B$, there is an isomorphism $T(AB)\cong T(BA)$ in $\D$. See \cref{rem:trace} for a more detailed discussion. 
\end{rmk}

The remark above motivates the following definition. 

\begin{defn} \label{defn:trace bicat}
 Let $\B$ be a bicategory and $\D$ a category. A \emph{bicategory trace} on $\B$ taking values in $\D$ is a functor $\biHH(\B) \to \D$. 
\end{defn}

As we observed above, there is usually no easy way to describe $\biHH(\B)$ and hence, in particular, functors out of $\biHH(\B)$. We therefore make use of the colimit presentation of $\biHH(\B)$ (\cref{eq:thh bicat}) to describe the category $\Fun\big(\biHH(\B),\D\big)$ of traces on a bicategory $\B$ with values in a category $\D$. 

Let $\big((\DD_{\leq 2})^{op}\big)^{\rhd}$ be the join of $(\DD_{\leq 2})^{op}$ with the terminal category. Intuitively $\big((\DD_{\leq 2})^{op}\big)^{\rhd}$ is the category $(\DD_{\leq 2})^{op}$ equipped with a new terminal object. Note there are evident inclusion functors $(\DD_{\leq 2})^{op} \to \big((\DD_{\leq 2})^{op}\big)^{\rhd}$ and $[0] \to \big((\DD_{\leq 2})^{op}\big)^{\rhd}$, sending the unique object $0$ to the terminal object, which we denote by $f$.

 \begin{defn} \label{def:cocone}
  Let $\B$ be a bicategory and $\D$ a category.
  
  The category $\Cocone(\B,\D)$ of \emph{$\B$-cocones taking values in $\D$}  has as objects the pseudofunctors $((\DD_{\leq 2})^{op})^{\rhd} \to \cat$ that fit into the diagram 
  \begin{center}
      \begin{tikzcd}
        (\DD_{\leq 2})^{op} \arrow[d, hookrightarrow] \arrow[dr,  "\biN^{cy}"] \\
        ((\DD_{\leq 2})^{op})^{\rhd} \arrow[r, dashed] & \cat \\
        \{f\} \arrow[ur, "\D"'] \arrow[u, hookrightarrow]
      \end{tikzcd}
  \end{center}
 and pseudonatural transformations as morphisms.
 \end{defn}
 
 Concretely we can depict an object in $\Cocone(\B,\D)$ as a diagram in $\cat$ of the form 
      \begin{equation} \label{eq:cocone}
       \begin{tikzcd}[row sep=0.3in, column sep=0.3in]
        \ds\coprod_{X_0} \B(X_0,X_0) 
        \arrow[r, shorten >=1ex,shorten <=1ex] \arrow[dr, "C_0"']
        &
       \ds\coprod_{X_0,X_1} \B(X_0,X_1,X_0) 
        \arrow[l, shift left=1.2, "d_1"] \arrow[l, shift right=1.2, "d_0"'] 
       \arrow[r, shift right, shorten >=1ex,shorten <=1ex ] \arrow[r, shift left, shorten >=1ex,shorten <=1ex] \arrow[d, "C_1"] 
      & 
      \ds\coprod_{X_0,X_1,X_2} \B(X_0,X_1,X_2,X_0)
  \arrow[l] \arrow[l, shift left=2, "d_2"] \arrow[l, shift right=2, "d_0"'] \arrow[dl, "C_2"]\\
  & \D & 
 \end{tikzcd},
\end{equation} 
and a morphism is a commutative diagram of natural transformations of the form
      \begin{equation} \label{eq:cocone morphism}
       \begin{tikzcd}[row sep=0.8in, column sep=0.3in]
         \ds\coprod_{X_0} \B(X_0,X_0) 
        \arrow[r, shorten >=1ex,shorten <=1ex] \arrow[dr, "C_0'"', bend right=20, ""{name=A}] \arrow[dr, "C_0" ""{name=U, below}, bend left=20] 
        &
       \ds\coprod_{X_0,X_1} \B(X_0,X_1,X_0) 
        \arrow[l, shift left=1.2, "d_1"] \arrow[l, shift right=1.2, "d_0"'] 
       \arrow[r, shift right, shorten >=1ex,shorten <=1ex ] \arrow[r, shift left, shorten >=1ex,shorten <=1ex] \arrow[d, "C_1'"' very near start, bend right=20, ""{name=B}] \arrow[d, "C_1" very near start, bend left=20, ""{name=V, below}] 
      & 
      \ds\coprod_{X_0,X_1,X_2} \B(X_0,X_1,X_2,X_0).
  \arrow[l] \arrow[l, shift left=2, "d_2"] \arrow[l, shift right=2, "d_0"'] \arrow[dl, "C_2'"' near start, bend right=20, ""{name=C}] \arrow[dl, "C_2", bend left=20, ""{name=W, below}]\\
  & \D & 
  \arrow[from=U, to=A, Rightarrow, "\alpha_0"', shorten <= 0.1in, shorten >= 0in]
  \arrow[from=V, to=B, Rightarrow, "\alpha_1"' near end, shorten <= 0.05in, shorten >= 0.0in]
  \arrow[from=W, to=C, Rightarrow, "\alpha_2"', shorten <= 0.1in, shorten >= 0.1in]
 \end{tikzcd}.
\end{equation}
 
Since the objects in $\Cocone(\B,\D)$ are pseudofunctors,  the commutativity in \cref{eq:cocone} and \cref{eq:cocone morphism} holds up to appropriate choices of natural isomorphisms. For example, the cocone condition in \cref{eq:cocone} implies that $C_1$ is naturally isomorphic to  $C_0 \circ d_0$ and $C_0 \circ d_1$, and $C_2$ is naturally isomorphic to $C_1 \circ d_0$, $C_1 \circ d_1$, and $C_1 \circ d_2$, such that the choices of natural isomorphisms are themselves unique, meaning there is essentially a unique coherence in each cocone. Understanding the precise data of a cocone and the various coherences and unique properties thereof is the main step of the proof of \cref{thm:main theorem}. As this description is quite technical, it has been relegated to \cref{sec:pseudo diag}.
 
 \begin{rmk} For every bicategory $\B$, there is a $\B$-cocone taking values in $\biHH(\B)$, given by the maps into the colimit.  As we observe below, this $\B$-cocone is universal, in the sense that every other $\B$-cocone factors through it.
 \end{rmk}

The identification below follows immediately from the colimit description of $\biHH(\B)$ (\cref{eq:thh bicat}) and the universal property of colimits. 
 
 \begin{prop} \label{lemma:cocones are traces}
 The functor 
 $$\Comp\colon\Fun\big(\biHH(\B),\D\big) \xrightarrow{\simeq}\Cocone(\B,\D) ,$$
which composes a functor $\biHH(\B) \to \D$ with the universal $\B$-cocone taking values in $\biHH(\B)$, is an equivalence of categories, which is natural in $\B$ and $\D$. 
 \end{prop}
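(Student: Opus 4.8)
The plan is to deduce the statement directly from the universal property of the pseudocolimit presentation of $\biHH(\B)$ established in \cref{prop:bicat}, after first recognizing $\Cocone(\B,\D)$ as a category of pseudococones. So the first step is to unwind the join in \cref{def:cocone}: a pseudofunctor $((\DD_{\leq 2})^{op})^{\rhd}\to\cat$ that restricts to the diagram $\B_*\circ\O_{bihh}$ (i.e.\ \cref{eq:thh bicat}) on $(\DD_{\leq 2})^{op}$ and to $\D$ on the cone point $f$ is precisely a pseudococone under $\B_*\circ\O_{bihh}$ with vertex $\D$: the functors $C_0,C_1,C_2$ of \cref{eq:cocone}, together with coherent natural isomorphisms $C_i\circ d_j\cong C_{i-1}$ and $C_i\circ s_j\cong C_{i+1}$ witnessing commutativity of the triangles, subject to the cocycle/coherence conditions packaged in the definition of a pseudofunctor. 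Likewise a pseudonatural transformation between two such pseudofunctors that is the identity on the pinned data is exactly a family of natural transformations $\alpha_i\colon C_i\Rightarrow C_i'$ as in \cref{eq:cocone morphism}, compatible with the coherence isomorphisms. Thus $\Cocone(\B,\D)$ is the category of pseudococones under $\B_*\circ\O_{bihh}$ with fixed vertex $\D$.

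Next I would invoke the universal property of $\biHH(\B)$. By \cref{prop:bicat}, $\biHH(\B)$ is the colimit of $\B_*\circ\O_{bihh}$ computed in the $(2,1)$-category $\cat$; since the indexing category $(\DD_{\leq 2})^{op}$ is an ordinary $1$-category, this colimit coincides with the pseudocolimit of $\B_*\circ\O_{bihh}$ in the $2$-category $\cat$. Because $\cat$ (like $\cat_\infty$) is cartesian closed, $\Fun(-,\D)$ sends this colimit to the corresponding (pseudo)limit of functor categories; concretely, whiskering with the structure maps $\lambda_i\colon \B(X_0,\dots,X_i,X_0)\to\biHH(\B)$ of the universal cocone yields a functor from $\Fun(\biHH(\B),\D)$ to the category of pseudococones under $\B_*\circ\O_{bihh}$ with vertex $\D$, sending a functor $G\colon\biHH(\B)\to\D$ to the pseudococone $(G\lambda_i)_i$ — this is precisely $\Comp$ — and a natural transformation $\eta\colon G\Rightarrow G'$ to its whiskerings $(\eta\lambda_i)_i$. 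The universal property of the pseudocolimit says exactly that this functor is an equivalence. Combined with the identification of the previous paragraph, this is the asserted equivalence $\Comp\colon \Fun(\biHH(\B),\D)\xrightarrow{\simeq}\Cocone(\B,\D)$.

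Finally, naturality in $\B$ and $\D$ is formal bookkeeping: naturality in $\D$ is immediate from functoriality of whiskering, and naturality in $\B$ uses that $\biHH(-)$ is functorial (\cref{rem:thh bicat functorial}), so that the universal cocones $\lambda_i$ and hence $\Comp$ vary functorially in $\B$.

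The one point that genuinely requires care — and so the main obstacle — is the passage from the space-level universal property of the colimit in the $(2,1)$-category $\cat$, which by itself only controls functors and natural \emph{isomorphisms} out of $\biHH(\B)$, to the full hom-category universal property needed to match \emph{all} natural transformations (the $\alpha_i$ of \cref{eq:cocone morphism}) on both sides. This is exactly where one must use that a colimit of a $1$-category-indexed diagram in $\cat$ is a genuine pseudocolimit, equivalently a weighted $2$-colimit, and invoke (or spell out via the Grothendieck construction / cartesian closedness of $\cat$) that such colimits enjoy the enhanced $2$-categorical universal property, so that $\Fun(\biHH(\B),\D)\simeq\lim_i\Fun(\B(X_0,\dots,X_i,X_0),\D)$ as categories and the latter limit is, since $\cat$ is $(2,1)$-categorical, precisely the pseudococone category $\Cocone(\B,\D)$.
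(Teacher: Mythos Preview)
Your proposal is correct and follows the same approach as the paper, which simply records that the statement ``follows immediately from the colimit description of $\biHH(\B)$ (\cref{eq:thh bicat}) and the universal property of colimits.'' You have unpacked that one-liner, including the genuine but standard point---which the paper leaves implicit---that the $(2,1)$-categorical colimit of a $1$-category-indexed diagram in $\cat$ is a pseudocolimit and hence enjoys the full $2$-categorical universal property needed to match noninvertible natural transformations on both sides.
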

 
 In other words, traces out of $\B$ into $\D$ (\cref{defn:trace bicat}) are equivalent to  $\B$-cocones  taking values in $\D$.
\subsection{The equivalence between shadows and traces}
\label{subsec:the main result}
We are finally ready to establish the main result of this section, that shadows (\cref{defn:shadow}) are equivalent to traces on bicategories (\cref{defn:trace bicat}).  To state the result precisely, we first introduce a notion of morphisms between shadows.

\begin{defn} \label{def:category of shadows}
 Let $\B$ be a bicategory and $\D$ a category. The \emph{category of shadows on $\B$ taking values in $\D$}, denoted 
 $\Sha(\B,\D)$, is specified as follows.
 \begin{enumerate}
     \item The objects are shadows, i.e., pairs $\big(\shadow{-},\theta\big)$ satisfying the conditions of \cref{defn:shadow}.
     \item Given two shadows $\big(\shadow{-}_1, \theta_1\big)$, $\big(\shadow{-}_2, \theta_2\big)$, a morphism between them consists of a natural transformation 
     $$\alpha_{-}\colon \shadow{-}_1 \to \shadow{-}_2$$
     such that:
     \begin{itemize}
         \item For any pair of 1-morphisms $F\colon X_0 \to X_1$, $G\colon X_1 \to X_0$ in $\B$, the diagram 
     \begin{equation} \label{eq:shadow cat cond one}
      \begin{tikzcd}[row sep=0.5in, column sep=0.5in]
        \shadow{FG}_1 \arrow[r, "\theta_1"] \arrow[d, "\alpha_{FG}"] & \shadow{GF}_1 \arrow[d, "\alpha_{GF}"] \\
        \shadow{FG}_2 \arrow[r, "\theta_2"] & \shadow{GF}_2,
      \end{tikzcd}
    \end{equation}
    in $\D$ commutes.
        \item For any three 1-morphisms $F\colon X_0 \to X_1$, $G\colon X_1 \to X_2$, $H\colon X_2 \to X_0$ in $\B$, the following diagram commutes
        \begin{equation} \label{eq:shadow cat cond two}
         \begin{tikzcd}[row sep=0.5in, column sep=0.5in]
         \shadow{(GF)H}_1 \arrow[r, "\shadow{a}_1"] \arrow[d, "\alpha_{(GF)H}"] & \shadow{G(FH)}_1 \arrow[d, "\alpha_{G(FH)}"] \\
         \shadow{(GF)H}_2 \arrow[r, "\shadow{a}_2"] & \shadow{G(FH)}_2
         \end{tikzcd}
        \end{equation}
        \item For any $1$-morphism $F\colon X_0 \to X_0$, the following diagram commutes 
        \begin{equation} \label{eq:shadow cat cond three}
         \begin{tikzcd}[row sep=0.5in, column sep=0.5in]
             \shadow{FU_{X_0}}_1 \arrow[r, "\shadow{r}_1"] \arrow[d, "\alpha_{FU_{X_0}}"] & \shadow{F}_1 \arrow[d, "\alpha_{F}"] & \shadow{U_{X_0}F}_1 \arrow[l, "\shadow{l}_1"'] \arrow[d, "\alpha_{U_{X_0}F}"]\\
             \shadow{FU_{X_0}}_2 \arrow[r, "\shadow{r}_2"] & \shadow{F}_2 & \shadow{U_{X_0}F}_2 \arrow[l, "\shadow{l}_2"']
         \end{tikzcd}
        \end{equation}
     \end{itemize}  
 \end{enumerate}
\end{defn}

\begin{thm}\label{thm:main theorem}
For any bicategory $\B$ and category $\D$, there is an equivalence of categories 
 $$\Fun\big(\biHH(\B),\D\big) \xrightarrow{ \ \simeq \ } \Sha(\B,\D)$$
 that factors via the functor $\Comp$ (\cref{lemma:cocones are traces}) through the category $\Cocone(\B,\D)$ (\cref{def:cocone}) and that is natural in $\B$ and $\D$. 
\end{thm}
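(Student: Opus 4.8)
The plan is to exhibit the desired equivalence as a composite $\Phi\circ\Comp$, where $\Comp\colon\Fun\big(\biHH(\B),\D\big)\xrightarrow{\simeq}\Cocone(\B,\D)$ is the equivalence of \cref{lemma:cocones are traces}, and $\Phi\colon\Cocone(\B,\D)\xrightarrow{\simeq}\Sha(\B,\D)$ is an equivalence, natural in $\B$ and $\D$, that we build explicitly. Since \cref{prop:bicat} already identifies $\biHH(\B)$ with the (pseudo)colimit of the truncated diagram \cref{eq:thh bicat}, the entire content of the theorem is the comparison of $\B$-cocones with shadows; the factorization through $\Cocone(\B,\D)$ is then automatic.

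First I would unpack the data of an object of $\Cocone(\B,\D)$, that is, of a pseudofunctor $\big((\DD_{\leq 2})^{op}\big)^{\rhd}\to\cat$ extending $\B_*\circ\O_{bihh}$ and restricting to the constant functor $\D$ on the cone point. Spelling out the generating face and degeneracy maps of \cref{eq:thh bicat}, such a pseudofunctor is the same thing as a triple of functors $C_0\colon\coprod_{X_0}\B(X_0,X_0)\to\D$, $C_1\colon\coprod_{X_0,X_1}\B(X_0,X_1,X_0)\to\D$, $C_2\colon\coprod_{X_0,X_1,X_2}\B(X_0,X_1,X_2,X_0)\to\D$, together with a specified family of natural isomorphisms $C_1\Rightarrow C_0\circ d_i$ and $C_2\Rightarrow C_1\circ d_j$ compatible with the degeneracies, subject to the coherence constraints imposed by the simplicial identities among $d_0,d_1,d_2,s_0,s_1$ and the cocone structure. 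Making this dictionary precise and usable is exactly the purpose of \cref{sec:pseudo diag}; it is the combinatorial heart of the whole argument.

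Given such data, define $\Phi$ of the cocone to be the pair $\big(\shadow{-},\theta\big)$ with $\shadow{-}=C_0$ --- whose domain $\coprod_X\B(X,X)$ is precisely that of a shadow --- and with $\theta_{F,G}\colon\shadow{FG}\xrightarrow{\cong}\shadow{GF}$ obtained, for $F\colon X_0\to X_1$ and $G\colon X_1\to X_0$, by composing the two comparison isomorphisms $C_0\circ d_0\overset{\cong}{\Longleftarrow}C_1\overset{\cong}{\Longrightarrow}C_0\circ d_1$ evaluated at $(F,G)$, using $d_0(F,G)=FG$ and $d_1(F,G)=GF$. The crucial point is that the two shadow axioms are then forced: \cref{eq:shadow cond one}, the hexagon involving the associator, is extracted from the three face maps out of the level-$2$ term $\B(X_0,X_1,X_2,X_0)$ --- recall $d_0(F,G,H)=(FG,H)$, $d_1(F,G,H)=(F,GH)$, $d_2(F,G,H)=(HF,G)$ --- together with the coherence isomorphisms relating $C_2$ to $C_1$ and the associativity constraint $a$ of $\B$ implicit in these composition maps; and \cref{eq:shadow cond two}, the unit triangle, is extracted from the degeneracy $s_0$ and the unitors $l,r$ of $\B$. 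Conversely, from a shadow $\big(\shadow{-},\theta\big)$ one builds a cocone with $C_0=\shadow{-}$, $C_1(F,G)=\shadow{FG}$, $C_2(F,G,H)=\shadow{FGH}$, and comparison cells assembled from $\theta$ and $\shadow{a}$; the two shadow axioms are precisely what is needed for these cells to satisfy the pseudofunctor coherences. One then checks that a morphism of cocones --- a pseudonatural transformation, which over the ordinary categories in play reduces to a natural transformation $\alpha_0\colon C_0\Rightarrow C_0'$ compatible with the comparison cells --- corresponds under $\Phi$ to a morphism of shadows in the sense of \cref{def:category of shadows}, i.e.\ to the commutativity of \cref{eq:shadow cat cond one} together with compatibility with associators and unitors, and that the two assignments are mutually inverse up to natural isomorphism. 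Naturality in $\B$ and $\D$ is immediate, as every construction is built from the manifestly functorial data of \cref{eq:thh bicat}.

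The step I expect to be the main obstacle is the first one: producing a genuinely tractable description of a pseudofunctor out of $\big((\DD_{\leq 2})^{op}\big)^{\rhd}$ and then matching its coherence constraints, one by one, with the two shadow axioms, while keeping careful track of the fact that $\biHH(\B)$ is a \emph{pseudo}colimit, so that all squares and triangles in sight are required to commute only up to specified coherent natural isomorphism rather than on the nose. Once that dictionary has been set up in \cref{sec:pseudo diag}, the passage back and forth between cocones and shadows, and the verification that it is an equivalence of categories natural in both variables, is essentially formal.
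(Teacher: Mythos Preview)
Your proposal is correct and follows essentially the same approach as the paper: both factor through $\Cocone(\B,\D)$ via $\Comp$, define the functor to $\Sha(\B,\D)$ by sending a cocone $(C_0,C_1,C_2)$ to $(C_0,\theta)$ with $\theta$ the composite $C_0d_0\cong C_1\cong C_0d_1$, build the inverse by setting $C_i=\shadow{-}\circ d_0^{\,i}$, and reduce the verification that the shadow axioms match the pseudofunctor coherences to the analysis in \cref{sec:pseudo diag} (in the paper this is packaged as \cref{lemma:trunc simpl lift}). Your identification of that combinatorial dictionary as the main obstacle is exactly right; the paper's proof is simply a more detailed execution of the outline you have given.
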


The proof is quite technical and hence delegated to \cref{sec:the main result proof}, however, the following remark provides some general intuition.

\begin{rmk}\label{rmk:strictification}
 The proof consists of a sort of ``strictification argument". We can think of a cocone in $\Cocone(\B,\D)$ as a general diagram that we can strictify to the data of a shadow $\Sha(\B,\D)$ via the strictification functor
	\[ \St\colon \Cocone(\B,\D) \to \Sha(\B,\D),\]
	defined in \cref{prop:st functor}. On the other hand, every shadow can be ``unstrictified'' to a cocone via the unstrictification functor
	\[ \Un\colon \Sha(\B,\D) \to \Cocone(\B,\D),\]
	defined in \cref{prop:un functor}. These two operations are indeed inverses, as we establish in \cref{prop:st and un inverses}, giving us the desired equivalence. 
	Hence, the proof demonstrates that no data is lost during the strictification process. A shadow can thus be characterized as the minimal amount of data required to describe a cocone. 
\end{rmk}

We end this section with several formal, but valuable, consequences of \cref{thm:main theorem}. 

\begin{cor} \label{cor:universal}
 Let $\B$ be a bicategory. There is a \emph{universal shadow} $\shadow{-}_u$ on $\B$ taking values in $\biHH(\B)$ such that for every other shadow $\shadow{-}$ on $\B$ taking values in a category $\D$, there is a unique functor $F\colon \biHH(\B) \to \D$ satisfying $F_*\shadow{-}_u = \shadow{-}$, where $F_*$ denotes postcomposition with $F$.
\end{cor}

\begin{proof}
 Let $\shadow{-}_u$ be the shadow that corresponds to the identity functor $\biHH(\B) \to \biHH(\B)$ under the equivalence in \cref{thm:main theorem}. The desired equality now follows from the fact that the following square commutes by naturality
 \begin{center}
     \begin{tikzcd}[row sep=0.5in]
       \Fun(\biHH(\B),\biHH(\B)) \arrow[r, "\simeq"] \arrow[d, "F_*"] & \Sha(\B,\biHH(\B)) \arrow[d, "F_*"]  \\
       \Fun(\biHH(\B),\D) \arrow[r, "\simeq"] & \Sha(\B,\D) 
     \end{tikzcd},
 \end{center}
for any functor $F\colon \biHH(\B) \to \D$.
\end{proof}

\begin{cor}
 For any bicategory $\B$, the functor $\Sha(\B,-)\colon \cat \to \cat$ is corepresentable and hence preserves limits. 
\end{cor}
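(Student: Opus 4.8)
The plan is to reduce the statement entirely to \cref{thm:main theorem}. That theorem provides, for every category $\D$, an equivalence of categories $\Fun\big(\biHH(\B),\D\big) \simeq \Sha(\B,\D)$ that is natural in $\D$; unwinding what ``natural'' means in the $(2,1)$-category $\cat$, this is a pseudonatural equivalence and hence exhibits an equivalence of (pseudo)functors $\Sha(\B,-) \simeq \Fun\big(\biHH(\B),-\big) \colon \cat \to \cat$. So it suffices to establish the two claims for the functor $\Fun\big(\biHH(\B),-\big)$ and transport them along this equivalence.

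First I would observe that $\Fun\big(\biHH(\B),-\big)$ is corepresentable essentially by definition: by \cref{ex:two cats} the object $\biHH(\B)$ is a genuine small category, and $\Fun\big(\biHH(\B),-\big)$ is precisely the functor $\cat\big(\biHH(\B),-\big)$ corepresented by $\biHH(\B) \in \cat$. Pulling back along the pseudonatural equivalence above then shows that $\Sha(\B,-)$ is corepresented, up to equivalence, by $\biHH(\B)$.

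For the limit-preservation claim I would use that $\cat$ is cartesian closed in the appropriate $2$-categorical sense: for any small category $C$ the functor $(-)\times C$ admits $\Fun(C,-)$ as a right (bi)adjoint, and right adjoints preserve limits; here ``limit'' is of course to be read, as befits a $(2,1)$-category, as a pseudo/bilimit. Specializing to $C = \biHH(\B)$ and transporting along $\Sha(\B,-)\simeq\Fun\big(\biHH(\B),-\big)$ gives the result. Alternatively, one argues directly that the corepresentable $\cat\big(\biHH(\B),-\big)$ sends a (pseudo)limit $\lim_i \D_i$ to $\lim_i \cat\big(\biHH(\B),\D_i\big)$ by the universal property of the limit.

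I expect no serious obstacle: the statement is a formal consequence of \cref{thm:main theorem}. The only point needing (routine) care is the $2$-categorical bookkeeping --- verifying that ``naturality in $\D$'' genuinely assembles into an equivalence of pseudofunctors, that corepresentability is meant in the enriched sense $\cat(C,-)$, and that ``right adjoint'' and ``preserves limits'' are interpreted bicategorically --- none of which introduces real difficulty.
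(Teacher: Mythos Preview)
Your proposal is correct and matches the paper's intended argument: the corollary is stated without proof in the paper, as it is an immediate consequence of \cref{thm:main theorem}, and your unpacking of why corepresentability and limit preservation follow from the natural equivalence $\Sha(\B,-)\simeq\Fun(\biHH(\B),-)$ is exactly the reasoning the reader is expected to supply.
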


\begin{rmk} \label{rmk:thh trace}
Applying \cref{thm:main theorem} to the shadow on the bicategory $\Ho (\Mod)$ of \cref{thm:cp} gives rise  to a trace (\cref{defn:trace bicat}) out of $\Ho (\Mod)$ into $\Ho(\Sp)$,
\[\THH\colon\biHH\big(\Ho (\Mod)\big) \to \Ho(\Sp),\]
as defined by Campbell and Ponto in \cite{campbellponto2019thh}.
We generalize this result to $\infty$-categorical bimodules in \cref{thm:thh shadow}.
\end{rmk}

\section{Morita Invariance of Shadows and the Universal Euler Characteristic} \label{sec:morita}
An important property satisfied by $\THH$ of ring spectra is {\it Morita invariance}: if two ring spectra $R$ and $R'$ are Morita equivalent (i.e., their model categories of modules are Quillen equivalent), then $\THH(R) \simeq \THH(R')$ \cite{blumbergmandell2012thh}. In line with the idea that a shadow is an axiomatic, bicategorical generalization of $\THH$, Campbell and Ponto proved that shadows satisfied a natural generalization of Morita invariance \cite[Proposition 4.6]{campbellponto2019thh}. Their result suggests the following natural question: Can we leverage our alternative characterization of shadows via traces to recover and possibly even strengthen Morita invariance of shadows? 

We formulate precise versions of this question and possible answers in \cref{subsec:morita invariance traces}. Doing so requires a precise understanding of the work in \cite{campbellponto2019thh}, which we hence first review in \cref{subsec:morita invariance shadows}.

\subsection{Morita invariance	of shadows} \label{subsec:morita invariance shadows}
In this section we review Campbell and Ponto's approach to Morita invariance via shadows \cite{campbellponto2019thh}. Let $\B$ be a bicategory equipped with a shadow functor $\shadow{-}$ taking values in a category $\D$. Given an adjunction in $\B$, i.e., a diagram 
\begin{equation} \label{eq:adj in C}
    \begin{tikzcd}[row sep=0.5in, column sep=0.5in]
    C \arrow[r, bend left = 40, "F"] \arrow[loop, in=130, out=225, looseness=10, ""{name=D, below}, "GF"]  & 
    D \arrow[l, bend left =40, "G"] \arrow[loop, in=45, out=315, looseness=10, ""{name=U}, "FG"']
    \arrow[from=1-1, to=D, Rightarrow, "u" description]
    \arrow[from=U, to=1-2, Rightarrow, "c" description]
    \end{tikzcd}
\end{equation}
of 0-, 1-, and 2-cells in $\B$ satisfying the triangle identities,  the {\it Euler characteristic} $\chi(F)$ of the  $1$-cell $F$ is defined to be the composite morphism 
\begin{equation} \label{eq:euler}
	\shadow{U_C} \xrightarrow{ \shadow{u} } \shadow{GF} \cong \shadow{FG} \xrightarrow{ \shadow{c} } \shadow{U_D}
\end{equation}
in $\D$.

Morita invariance of shadows is formulated as follows in terms of the Euler characteristic.

\begin{prop}[{\cite[Proposition 4.6]{campbellponto2019thh}}]\label{prop:morita}
 Shadows satisfy Morita invariance. Concretely, if the adjunction in \eqref{eq:adj in C} is an equivalence, i.e., if the 2-cells $c$ and $u$ are invertible, then $\chi(F)$ is an isomorphism in $\D$ with inverse given by $\chi(G)$.
\end{prop}

This result can be generalized to arbitrary endomorphisms. For a given endomorphism $Q\colon C \to C$, there is a chain of morphisms 
\begin{equation} \label{eq:trace}
 \shadow{Q} \xrightarrow{ \ \shadow{u\rid_Q}  \ } \shadow{GFQ} \xrightarrow{\shadow{\rid_{GFQ}u}} \shadow{GFQGF} \cong \shadow{FQGFG} \xrightarrow{ \shadow{\rid_{FQG}c} } \shadow{FQG},	
\end{equation}
which is known as the {\it trace of $Q$} (which is different from the trace defined in \cref{defn:trace bicat}) and denoted $\tr(u_Q)$. Now, we have the following result due to Campbell and Ponto.

\begin{prop}[{\cite[Proposition 4.8]{campbellponto2019thh}}]\label{prop:morita end}
   If the adjunction in \eqref{eq:adj in C} is an equivalence, i.e., if the 2-cells $c$ and $u$ are invertible, then $\tr(u_Q)$ is an isomorphism in $\D$ with inverse given by $\tr(c_Q)$.
\end{prop}

\begin{rmk} \label{rem:morita shadows}
 We can summarize the contents of \cref{prop:morita} and \cref{prop:morita end} as follows: Given a bicategory $\B$, a category $\D$, a shadow on $\B$ taking values in $\D$, and a certain diagram in $\B$, we can extract a morphism in $\D$ (the Euler characteristic or trace), which under suitable conditions on the diagram in $\B$, is an isomorphism. 
\end{rmk}

\subsection{Morita invariance and universality} \label{subsec:morita invariance traces}
In the previous subsection we reviewed the Morita invariance results due to Campbell and Ponto \cite{campbellponto2019thh}. In particular, we observed in \cref{rem:morita shadows} that Morita invariance of a shadow on $\B$ in $\D$ involves constructing (iso)morphisms in $\D$ based on given bicategorical data $D$ in $\B$. Given our new ability to articulate shadows as functors $\biHH(\B) \to \D$, we can naturally generalize Morita invariance using what we call the \emph{universal invariance method}.

\begin{rmk} \label{rem:universal}
 Let $T\colon \biHH(\B) \to \D$ correspond under the equivalence of \cref{thm:main theorem} to a shadow on a bicategory $\B$ taking values in a category $\D$. The \emph{universal invariance method} consists of the following steps, which can be carried out in a number of interesting cases. 
 \begin{enumerate}
	\item Find a $2$-category $\A$ such that the data of interest $D$ in $\B$ can be encoded as a $2$-functor $D\colon\A \to \B$, meaning $\A$ is the $2$-category freely generated by the data of interest. For example, if the data of interest $D$ is an adjunction, then $\A$ is the free adjunction $2$-category.
  \item Exploit an explicit description of $\biHH(\A)$ to extract a \emph{universal morphism} $\alpha_U\colon [1] \to \biHH(\A)$.
	\item Compose the universal morphism $\alpha_U\colon [1] \to \biHH(\A)$ with the functor $T\colon\biHH(\A) \to \D$ to obtain a morphism in $\alpha= T\circ \alpha_U$ in $\D$.
	\item \emph{$\A$-invariance} then simply corresponds to requiring that under suitable conditions on $D\colon\A \to \B$, the morphisms $\alpha $ is an isomorphism, which by functoriality reduces to whether $\alpha_U$ is an isomorphism in $\biHH(\A)$. For example, if the data of interest $D$ is an adjunction, then $\A$-invariance is Morita invariance, as formulated above.
 \end{enumerate}
\end{rmk}

We now apply the universal invariance method to the Euler characteristic and trace. In addition to reducing arguments about the Euler characteristic or trace to analyzing universal choices in $\biHH(\A)$ for a suitable choice of $\A$, we can ask whether our universal morphisms are indeed unique or depend on choices, by analyzing the categories $\biHH(\A)$.

\begin{rmk}
  Beyond the Euler characteristic and trace, we could use this method to look at a broad class of diagram shapes $\A$ and thus extract interesting formal properties of shadows via the computation of $\biHH(\A)$, for relevant choices of $\A$.   
\end{rmk}

 We commence with the Euler characteristic, which necessitates focusing on adjunctions. 
  
\begin{defn} \label{def:adj} 
 Let $\Adj$ be the free strict bicategory generated by the two objects, two $1$-morphisms and two $2$-morphisms given in \eqref{eq:adj in C} and satisfying relations given by the triangle equalities.
\end{defn}

 For more details regarding the $2$-category $\Adj$, see the original description in \cite{schanuelstreet1986freeadj}.
 
\begin{defn} \label{def:adjeq} 
 Let $\AdjEq$ be the localization of $\Adj$ that is a strict bicategory with the same objects, $1$-cells and $2$-cells as $\Adj$, but where all $2$-morphisms are invertible.
\end{defn}

As in \cref{eq:adj in C}, a bifunctor out of $\AdjEq$ can be depicted as a diagram 
 \begin{center}
     \begin{tikzcd}[row sep=0.5in, column sep=0.5in]
    0 \arrow[r, bend left = 40, "f"] \arrow[loop, in=130, out=225, looseness=10, ""{name=D, below}, "g \circ f"]  & 1 \arrow[l, bend left =40, "g"] \arrow[loop, in=45, out=315, looseness=10, ""{name=U}, "f \circ g"']
    \arrow[from=1-1, to=D, Rightarrow, "u" description, "\cong"' near end]
    \arrow[from=U, to=1-2, Rightarrow, "c" description, "\cong"' near start]
    \end{tikzcd}
 \end{center}

\begin{defn} \label{def: adjb adjeqb}
 For any bicategory $\B$, let
 $$ \Adj(\B) = \Fun(\Adj, \B)$$
 and
 $$ \AdjEq(\B) = \Fun(\AdjEq,\B),$$
 the categories of {\it adjunctions in $\B$} and of {\it adjoint equivalences in $\B$}, respectively.    
\end{defn}

There is an evident localization functor $\Adj \to \AdjEq$ that induces a restriction functor 
 \begin{equation} \label{eq:restriction}
   \AdjEq(\B) \to \Adj(\B).
    \end{equation}
    
To complete our set-up, we introduce the following notation.

\begin{notn}
 For any category $\D$, let
 $$ \Arr(\D) = \Fun([1],\D),$$
 and let $\Iso(\D)$ denote the full subcategory of $\Arr(\D)$ consisting of isomorphisms.
\end{notn}

The explicit colimit description of $\biHH(\Adj)$ as a pseudo-colimit in $\cat$ (\cref{def:biHH}) implies in particular that $\biHH(\Adj)$  has as objects the  endomorphisms $\id_0,\id_1$, $fg$, and $gf$ and as $1$-morphisms $u\colon \id_0 \to gf$, $c\colon fg \to \id_1$, and an isomorphism $fg \cong gf$. These observations imply that the following definition makes sense.

\begin{defn} \label{def:universal Euler characteristic}
  The \emph{universal Euler characteristic} is the morphism 
  $$\widehat \chi_U \colon \id_0 \xrightarrow{u} gf \xrightarrow{\cong} fg \xrightarrow{c} \id_1$$ 
  in $\biHH(\Adj)$. 
\end{defn}

The universal Euler characteristic is indeed an Euler characteristic in the sense of \cite{campbellponto2019thh}.

\begin{ex}
  If $\shadow{-}_U$ be the universal shadow functor from $\Adj$ to $\biHH(\Adj)$ (\cref{cor:universal}), then the Euler characteristic of the adjunction data in $\Adj$ is precisely the universal Euler characteristic $\widehat \chi_U$. 
\end{ex}

We can now leverage the universal Euler characteristic to establish the following immediate, yet important, result. Here we recall also that $\biHH$ is indeed functorial (\cref{rem:thh bicat functorial}).

\begin{thm} \label{thm:chi functor}
 Let $\B$ be a bicategory. The functor 
  \[ 
      \widehat\chi\colon \Adj(\B)  = \Fun(\Adj,\B) \xrightarrow{ \ \biHH \ } \Fun\big(\biHH(\Adj),\biHH(\B)\big) \xrightarrow{ \ (\widehat \chi_U)^* } \Fun\big([1],\biHH(\B)\big) = \Arr\big(\biHH(\B)\big)
  \]   
takes an adjunction $\sigma=(C,D,F,G,u,c)$ in $\B$ to its Euler characteristic with respect to the universal shadow $\shadow{-}_u$ on $\B$, 
    $$\widehat \chi(\sigma) \colon \shadow{\id_C}_u \to \shadow{GF}_u \cong \shadow{FG}_u \to \shadow{\id_D}_u$$
    in $\biHH(\B)$. 
   
    In particular, for any shadow $\shadow{-}$ on $\B$ with values in $\D$, the Euler characteristic $\chi(\sigma)$ is equal to $T \circ \widehat\chi(\sigma)$, where $T\colon\biHH(\B) \to \D$ corresponds to $\shadow{-}$ via \cref{thm:main theorem}.
\end{thm}

Note that it is essential here that the codomain of this functor is the category of morphisms in $\biHH(\B)$, which ensures the existence of the isomorphism $GF \xrightarrow{\cong} FG$.

\begin{proof}
  This identification follows directly from functoriality and the computation that for a given shadow $\shadow{-}$, with associated functor $T\colon \biHH(\B) \to \D$, the Euler characteristic $\sigma$ is obtained via the composition
  \[
    [1] \xrightarrow{ \ \widehat{\chi}_U \ } \biHH(\Adj) \xrightarrow{ \ \biHH \sigma \ } \biHH(\B) \xrightarrow{T} \D
  \]
  and hence the result follows.
\end{proof}

The corollary below is an immediate consequence of the fact that every functor preserves isomorphisms.

\begin{cor} \label{cor:chi functor on iso}
 Precomposing the functor $ \widehat\chi\colon \Adj(\B) \to \Arr\big(\biHH(\B)\big)$ with the restriction functor $\AdjEq(\B) \to \Adj(\B)$ given in \eqref{eq:restriction} gives rise to a functor 
 $$\widehat\chi\colon \AdjEq(\B) \to \Iso\big(\biHH(\B)\big).$$
\end{cor}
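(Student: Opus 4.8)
The plan is to deduce the statement from the construction of $\widehat\chi$ carried out in the proof of \cref{thm:chi functor}, together with the functoriality of $\biHH$ for $\cat$-enrichments (\cref{rem:thh bicat functorial}) and the elementary fact that functors preserve isomorphisms.

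First I would examine the category $\biHH(\AdjEq)$. Applying $\biHH$ to the localization functor $\Adj\to\AdjEq$ (\cref{def:adjeq}) yields a functor $\biHH(\Adj)\to\biHH(\AdjEq)$. Recall from the proof of \cref{thm:chi functor} that $\biHH(\Adj)$ contains the objects $\id_0,\id_1$, the morphisms $u\colon\id_0\to gf$ and $c\colon fg\to\id_1$, and the cyclic isomorphism $fg\cong gf$, and that $\alpha\colon[1]\to\biHH(\Adj)$ picks out the composite
\[
\id_0\xrightarrow{\ u\ }gf\xrightarrow{\ \cong\ }fg\xrightarrow{\ c\ }\id_1 .
\]
In $\AdjEq$ the $2$-cells $u$ and $c$ are invertible, and the canonical map $\coprod_{X_0}\AdjEq(X_0,X_0)\to\biHH(\AdjEq)$ is a functor; hence the images of $u$ and $c$ in $\biHH(\AdjEq)$ are isomorphisms, as is the cyclic map $fg\cong gf$. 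Thus the composite $\alpha^{\mathrm{eq}}\colon[1]\to\biHH(\Adj)\to\biHH(\AdjEq)$ picks out an \emph{isomorphism} of $\biHH(\AdjEq)$.

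Next I would unwind the definition of $\widehat\chi$ on $\AdjEq(\B)$: by construction it is the composite $\AdjEq(\B)\to\Adj(\B)\xrightarrow{\widehat\chi}\Arr\big(\biHH(\B)\big)$, the first arrow being restriction along $\Adj\to\AdjEq$ as in \ref{eq:restriction}. Given $\sigma\in\AdjEq(\B)=\Fun(\AdjEq,\B)$, functoriality of $\biHH$ produces $\biHH(\sigma)\colon\biHH(\AdjEq)\to\biHH(\B)$, and, combined with the explicit description of $\widehat\chi$ from the proof of \cref{thm:chi functor}, it identifies $\widehat\chi(\sigma)$ with $\biHH(\sigma)\circ\alpha^{\mathrm{eq}}\colon[1]\to\biHH(\B)$. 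Since $\biHH(\sigma)$ is a functor and $\alpha^{\mathrm{eq}}$ picks out an isomorphism, $\widehat\chi(\sigma)$ picks out an isomorphism of $\biHH(\B)$.

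Finally I would conclude formally: by the previous step every object in the image of the composite functor $\AdjEq(\B)\to\Arr\big(\biHH(\B)\big)$ lies in the full subcategory $\Iso\big(\biHH(\B)\big)$, which then also contains every morphism between such objects; hence the composite corestricts to a functor $\widehat\chi\colon\AdjEq(\B)\to\Iso\big(\biHH(\B)\big)$. As signalled by the remark preceding the statement, there is essentially no obstacle here; the only point deserving any care is the identification in the previous paragraph, namely that the morphism produced by \cref{thm:chi functor} is genuinely $\biHH(\sigma)$ applied to a composite of isomorphisms once $u$ and $c$ are assumed invertible, which follows by tracing the naturality of the construction of $\widehat\chi$ through the localization $\Adj\to\AdjEq$.
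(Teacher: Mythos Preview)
Your proof is correct and rests on exactly the principle the paper invokes, namely that functors preserve isomorphisms; the paper's entire argument is the single sentence preceding the statement. The only difference is cosmetic: you route the argument through $\biHH(\AdjEq)$ and show $\alpha^{\mathrm{eq}}$ picks out an isomorphism there before pushing forward along $\biHH(\sigma)$, whereas the paper implicitly works directly in $\biHH(\B)$, observing that when $\sigma$ is an adjoint equivalence the $2$-cells $u$ and $c$ are invertible in $\B(C,C)$ and $\B(D,D)$, hence their images under the canonical functor $\coprod_X\B(X,X)\to\biHH(\B)$ are isomorphisms, and so the composite defining $\widehat\chi(\sigma)$ is an isomorphism.
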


From \cref{cor:chi functor on iso} we can recover the Morita invariance of shadows, proven originally in \cite[Proposition 4.6]{campbellponto2019thh}.

\begin{prop} \label{prop:shadow morita invariant}
 Shadows satisfy Morita invariance, in the sense of \cref{prop:morita}.
\end{prop}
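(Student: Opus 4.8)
The plan is to deduce Morita invariance formally from \cref{cor:chi functor on iso} together with \cref{thm:main theorem}, by running the abstract Euler characteristic construction inside the representing category $\biHH(\B)$ and then transporting it along an arbitrary shadow. First I would fix a bicategory $\B$ with a shadow $\shadow{-}$ valued in a category $\D$ and let $F\colon\biHH(\B)\to\D$ be the functor corresponding to $\shadow{-}$ under the equivalence of \cref{thm:main theorem}. Unwinding Step~(II) of the proof of \cref{thm:main theorem}, this $F$ sends the universal shadow $\shadow{-}_u\colon\coprod_{X}\B(X,X)\to\biHH(\B)$ to $\shadow{-}$, sends the canonical ``trace'' isomorphisms of $\biHH(\B)$ to the structural isomorphism $\theta$ of $\shadow{-}$, and sends the morphism of $\biHH(\B)$ induced by a $2$-cell $w$ of $\B$ to $\shadow{w}$.

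Next I would take an adjoint equivalence in $\B$, i.e.\ a diagram as in \cref{eq:adj in C} with the $2$-cells $u$ and $c$ invertible, and regard it as an object $\sigma$ of $\AdjEq(\B)=\Fun(\AdjEq,\B)$. By \cref{cor:chi functor on iso} the morphism $\widehat\chi(\sigma)$ is an isomorphism in $\biHH(\B)$; tracing $\sigma$ through $\biHH(\sigma)\colon\biHH(\Adj)\to\biHH(\B)$ and then along the edge $\alpha$ used in the proof of \cref{thm:chi functor}, this isomorphism is the composite $\id_C\xrightarrow{u}GF\xrightarrow{\ \cong\ }FG\xrightarrow{c}\id_D$ in $\biHH(\B)$, with the middle arrow the canonical trace isomorphism. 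Applying $F$ and invoking the three identifications of the previous paragraph, $F\big(\widehat\chi(\sigma)\big)$ becomes precisely the Euler characteristic $\chi(F)\colon\shadow{U_C}\xrightarrow{\ \shadow{u}\ }\shadow{GF}\xrightarrow{\ \cong\ }\shadow{FG}\xrightarrow{\ \shadow{c}\ }\shadow{U_D}$ of Campbell and Ponto. Since a functor preserves isomorphisms, $\chi(F)$ is an isomorphism in $\D$, which is exactly the assertion of \cref{prop:morita}.

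To pin down the inverse I would rerun the argument with the ``opposite'' adjoint equivalence $\sigma^\vee=(D,C,G,F,c^{-1},u^{-1})$: in $\biHH(\B)$ the isomorphism $\widehat\chi(\sigma^\vee)$ is the composite $\id_D\xrightarrow{c^{-1}}FG\xrightarrow{\ \cong\ }GF\xrightarrow{u^{-1}}\id_C$, which is inverse to $\widehat\chi(\sigma)$ since the two trace isomorphisms $GF\cong FG$ and $FG\cong GF$ are mutually inverse there; applying $F$ then gives $\chi(G)=\chi(F)^{-1}$. The generalization of Morita invariance to an arbitrary endomorphism $Q$ would follow in the same manner, replacing the edge $\alpha$ by the longer zig-zag in $\biHH(\B)$ that represents $\tr(u_Q)$ in the construction of \cref{thm:chi functor}.

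The step I expect to be the main obstacle is the identification, in the second paragraph, of $F\big(\widehat\chi(\sigma)\big)$ with $\chi(F)$: one has to follow the edge $\alpha\in\biHH(\Adj)$ through $\biHH(\sigma)$ and then through $F$, and match its three constituent arrows ($u$, the trace isomorphism, and $c$) edge by edge with the defining composite for $\chi$, using that $F$ carries the universal shadow and its trace isomorphism to $\shadow{-}$ and its $\theta$. Once this bookkeeping is in place, the remainder of the argument is purely formal.
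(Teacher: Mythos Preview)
Your proposal is correct and follows essentially the same approach as the paper's proof: both use \cref{thm:main theorem} to replace the shadow by a functor $F\colon\biHH(\B)\to\D$, apply $\widehat\chi$ from \cref{thm:chi functor} to land in $\Arr(\biHH(\B))$, identify the resulting arrow as the Euler characteristic, and invoke \cref{cor:chi functor on iso} to conclude that it is an isomorphism when the adjunction is an equivalence. Your write-up is somewhat more explicit about the bookkeeping in the identification $F(\widehat\chi(\sigma))=\chi(F)$ and additionally pins down the inverse via the opposite adjoint equivalence $\sigma^\vee$, which the paper's proof of \cref{prop:shadow morita invariant} does not spell out (though it is implicit in the statement of \cref{prop:morita}); otherwise the arguments coincide.
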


\begin{proof}
 Let $\shadow{-}$ be a shadow functor on $\B$ taking values in $\D$. By \cref{cor:universal}, there exists a functor $F\colon \biHH(\B) \to \D$, such that $\shadow{-} = T \circ \shadow{-}_u$, where $\shadow{-}_u$ is the universal shadow. Now, by \cref{thm:chi functor}, for a given adjunction $\sigma = (C,D,F,G,u,c)$ in $\B$, we have $\chi(\sigma) = T \circ \widehat\chi(\sigma)$ and so the result follows from \cref{cor:chi functor on iso}.
\end{proof}

We now move on to the more general case and study traces of endomorphisms $\tr(u_Q)$, which requires a better understanding of the $2$-category with one adjunction and one endomorphism. 
\begin{defn} \label{def:adjend}
 Let $\AdjEnd$ be the free category with the same objects, $1$-morphisms and $2$-morphisms as $\Adj$, along with one additional free endomorphism $[0] \to [0]$. Define $\AdjEqEnd$ as its localization such that all 2-cells are invertible, with localization map $\AdjEnd \to \AdjEqEnd$.
\end{defn}

\begin{rmk} \label{rmk:morphism in adjbn} 
If we denote the free endomorphism by $q\colon0 \to 0$, an arbitrary endomorphism $0 \to 0$ in $\AdjEnd$ is a word on the two letters $q$ and $gf$, whereas a free endomorphism on $1$ is a word on $fg$ and $fqg$.
\end{rmk}

\begin{defn} \label{def:adjendb}
 For a bicategory $\B$, let 
 \[\AdjEnd(\B) = \Fun(\AdjEnd,\B) \text{ and } \AdjEqEnd(\B) = \Fun(\AdjEqEnd,\B).\]  
\end{defn}

As in the case of $\Adj$ and $\AdjEq$, there is an inclusion functor $\AdjEqEnd(\B) \to \AdjEnd(\B)$. Moreover, since there is a pushout square 
\begin{center}
    \begin{tikzcd}
      {[0]} \arrow[r] \arrow[d] & \Adj \arrow[d] \\
      B\bN \arrow[r] & \AdjEnd 
    \end{tikzcd}
\end{center}
and a similar one for $\AdjEqEnd$, there are equivalences
$$\AdjEnd(\B) \simeq \Adj(\B) \times_\B \End(\B),$$
$$\AdjEqEnd(\B) \simeq \AdjEq(\B) \times_\B \End(\B).$$

The explicit descriptions of $\biHH(\AdjEnd)$ (\cref{rem:thh bicat functorial}) and of endomorphisms in $\AdjEnd$ (\cref{rmk:morphism in adjbn}) together imply that $\biHH(\AdjEnd)$ includes $1$-morphisms $uqu\colon q \to gfqgf$ and $fqgc\colon fqgfg \to fqg$ and that there exists an isomorphism $gfqgf \cong fqgfg$.

\begin{defn} \label{def:universal trace characteristic}
  The \emph{universal trace} is the morphism 
  $$\widehat \tr_U \colon q \xrightarrow{uqu} gfqgf \xrightarrow{\cong} fqgfg \xrightarrow{fqgc} fqg$$ 
  in $\biHH(\AdjEnd)$. 
\end{defn}

The universal trace is indeed a trace in the sense of \cite{campbellponto2019thh}.

\begin{ex}
  If $\shadow{-}_U$ be the universal shadow functor from $\AdjEnd$ to $\biHH(\AdjEnd)$ (\cref{cor:universal}), then the trace of the data in $\AdjEnd$ is precisely the universal trace $\widehat \tr_U$. 
\end{ex}

Functoriality of  $\biHH$  also implies the following generalization of \cref{thm:chi functor}. 

\begin{thm} \label{thm:tr functor}
  Let $\B$ be a bicategory. The functor 
  {\smaller{\[ 
      \widehat\tr\colon \AdjEnd(\B)  = \Fun(\AdjEnd,\B) \xrightarrow{ \biHH } \Fun\big(\biHH(\AdjEnd),\biHH(\B)\big) \xrightarrow{ (\widehat \tr_U)^* } \Fun([1],\biHH\B) = \Arr\big(\biHH(\B)\big)
  \]}}   
 \noindent takes an adjunction $\sigma=(C,D,F,G,u,c)$ in $\B$ and endomorphism $Q$ on $\B$ to the trace of $(\sigma,Q)$ via the universal shadow $\shadow{-}_u$ on $\B$, 
 \[\widehat \tr(\sigma,Q) \colon \shadow{Q}_u \xrightarrow{\shadow{uQu}_u} \shadow{GFQGF}_u \xrightarrow{\cong} \shadow{FQGFG}_u \xrightarrow{\shadow{FQGc}_u} \shadow{FQG}_u\]
 in $\biHH(\B)$.

 In particular, for any shadow $\shadow{-}$ on $\B$ with values in $\D$, the trace $tr(\sigma,Q)$ is equal to $T \circ \widehat\tr(\sigma,Q)$, where $T\colon\biHH(\B) \to \D$ corresponds to $\shadow{-}$ via \cref{thm:main theorem}.
\end{thm}

\begin{proof}
  The result follows immediately from the functoriality of $\biHH$ and of $ (\widehat \tr_U)^*$.
\end{proof}

We thus have a result analogous to \cref{cor:chi functor on iso}.

\begin{cor} \label{cor:tr functor on iso}
 Precomposing the functor $ \widehat\tr\colon \AdjEnd(\B) \to \Arr\big(\biHH(\B)\big)$ with the restriction functor $\AdjEqEnd(\B) \to \AdjEnd(\B)$ given in \cref{def:adjendb} gives rise to a functor 
 $$\widehat\chi\colon \AdjEqEnd(\B) \to \Iso\big(\biHH(\B)\big).$$
\end{cor}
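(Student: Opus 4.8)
The plan is to follow verbatim the argument used for \cref{cor:chi functor on iso}, with the single‑adjunction data replaced by its whiskering by the endomorphism $q$. First I would unwind the definitions: the restriction functor $\AdjEqEnd(\B)\to\AdjEnd(\B)$ is precomposition with the localization $L\colon\AdjEnd\to\AdjEqEnd$ of \cref{def:adjend}, and $\widehat\tr$ is, by the proof of \cref{thm:tr functor}, precomposition with the functor $\beta\colon[1]\to\biHH(\AdjEnd)$. Using the functoriality of $\biHH$ for $\cat$-enrichments (\cref{rem:thh bicat functorial}), the restriction of $\widehat\tr$ along \ref{eq:restriction} therefore sends an object $\Psi\colon\AdjEqEnd\to\B$ to the morphism
$$\biHH(\Psi)\circ\biHH(L)\circ\beta\colon[1]\longrightarrow\biHH(\B).$$

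The heart of the proof is to check that $\biHH(L)\circ\beta$ already selects an \emph{isomorphism} in $\biHH(\AdjEqEnd)$; the conclusion then follows because $\biHH(\Psi)$, being a functor, preserves that isomorphism, so the whole composite factors through the full subcategory $\Iso(\biHH(\B))\subseteq\Arr(\biHH(\B))$. To verify this, recall that $\beta$ picks out the composite of the $1$-morphisms $uqu\colon q\to gfqfg$ and $fqgc\colon fqgfg\to fqg$ together with the canonical isomorphism $gfqgf\cong fqgfg$ in $\biHH(\AdjEnd)$. Naturality of the colimit cocone legs $\coprod_X(-)(X,X)\to\biHH(-)$ with respect to $L$ identifies $\biHH(L)(uqu)$ and $\biHH(L)(fqgc)$ with the images, under the cocone leg of $\biHH(\AdjEqEnd)$, of the corresponding whiskerings of the $2$-cells $u$ and $c$ in $\AdjEqEnd$. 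Since $u$ and $c$ are invertible in $\AdjEqEnd$ and whiskering by a fixed $1$-cell is a functor on hom-categories, those whiskerings are invertible; since the cocone leg is a functor, their images are isomorphisms in $\biHH(\AdjEqEnd)$. The middle component is invertible by construction, so $\biHH(L)\circ\beta$ is an isomorphism.

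The only step calling for any care — and it is purely bookkeeping — is the identification in the previous paragraph: tracing through the explicit colimit presentation of $\biHH(\AdjEnd)$ (\cref{eq:thh bicat}, \cref{rmk:morphism in adjbn}) to recognise $uqu$ and $fqgc$ as images of whiskered copies of $u$ and $c$, keeping track of the cyclic identifications in the bar construction, and confirming that these identifications are compatible with $\biHH(L)$. Once this is done there is nothing left to prove: the argument is literally the one used for \cref{cor:chi functor on iso}, so I would present it as such.
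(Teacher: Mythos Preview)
Your proposal is correct and matches the paper's approach: the paper gives no explicit proof for this corollary, simply declaring it ``analogous to \cref{cor:chi functor on iso}'', which in turn is justified by the one-line remark that every functor preserves isomorphisms. Your argument unwinds exactly this, routing through $\biHH(\AdjEqEnd)$ and observing that the whiskered copies of $u$ and $c$ are invertible there; the paper's implicit version is marginally more direct (one can argue in $\biHH(\B)$ itself, since $u$ and $c$ are already invertible in $\B$ and the cocone leg $\coprod_X\B(X,X)\to\biHH(\B)$ is a functor), but the content is identical.
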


Finally, we can recover \cite[Proposition 4.8]{campbellponto2019thh}, via the same argument used in \cref{prop:shadow morita invariant}, based on \cref{thm:main theorem}.

\begin{prop} 
 Let $\shadow{-}$ be a shadow on a bicategory $\B$ taking values in a category $\D$. For a given Morita equivalence $\sigma=(C,D,F,G,u,c)$ and endomorphism $Q\colon C \to C$, the map $\tr(u_Q)\colon \shadow{Q} \to \shadow{FQG}$ is an isomorphism in $\D$ with inverse $\tr(c_Q)$.
\end{prop}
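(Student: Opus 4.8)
The argument is formally identical to that of \cref{prop:shadow morita invariant}, with $\widehat\tr$ replacing $\widehat\chi$. First, apply \cref{thm:main theorem} to the shadow $\shadow{-}$ to obtain its unstrictification $\Un\big(\shadow{-}\big): \biHH(\B) \to \D$, a genuine functor of categories, and hence an induced functor $\Arr\big(\Un(\shadow{-})\big): \Arr\big(\biHH(\B)\big) \to \Arr(\D)$. Precomposing with the functor $\widehat\tr: \AdjEnd(\B) \to \Arr\big(\biHH(\B)\big)$ of \cref{thm:tr functor} yields a functor
$$\Arr\big(\Un(\shadow{-})\big) \circ \widehat\tr: \AdjEnd(\B) \to \Arr(\D).$$
The plan is to show that this functor sends a pair $(\sigma, Q)$, with $\sigma=(C,D,F,G,u,c)$, to the morphism $\tr(u_Q): \shadow{Q} \to \shadow{FQG}$ defined in \cref{sec:morita}, and then to invoke \cref{cor:tr functor on iso}.

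To identify the image morphism, recall that $\widehat\tr(\sigma,Q)$ is by construction the composite in $\biHH(\B)$ of the images of the $1$-morphisms $uqu: q \to gfqfg$ and $fqgc: fqgfg \to fqg$ together with the canonical isomorphism $gfqgf \cong fqgfg$ coming from the colimit description of $\biHH(\AdjEnd)$. A functor out of $\biHH(\B)$ carries these to the images under $\shadow{-}$ of the $2$-cells $u$ and $c$, the shadow isomorphism $\theta$, and the associators and unitors built into $\biHH(\B)$; comparing with the explicit chain
$$\shadow{Q} \cong \shadow{Q\id_C} \xrightarrow{\shadow{u\rid_Q}} \shadow{GFQ} \xrightarrow{\shadow{\rid_{GFQ}u}} \shadow{GFQGF} \cong \shadow{FQGFG} \xrightarrow{\shadow{c}} \shadow{FQG}$$
that defines $\tr(u_Q)$, one sees that the two composites agree, since the relabellings by unitors and associators distinguishing them are precisely the coherence isomorphisms already present in $\biHH(\B)$, hence preserved by $\Un(\shadow{-})$. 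This bookkeeping comparison is the only step requiring genuine verification; everything else is formal.

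Finally, suppose $\sigma$ is a Morita equivalence, i.e.\ lies in the image of the restriction functor $\AdjEqEnd(\B) \to \AdjEnd(\B)$. Then \cref{cor:tr functor on iso} shows that $\widehat\tr(\sigma,Q)$ already lies in $\Iso\big(\biHH(\B)\big)$; unwinding the construction of $\widehat\tr$ with the roles of $u$ and $c$ exchanged via the localization identifies its inverse with $\tr(c_Q)$. Applying $\Arr\big(\Un(\shadow{-})\big)$, which preserves isomorphisms and their inverses, we conclude that $\tr(u_Q): \shadow{Q} \to \shadow{FQG}$ is an isomorphism in $\D$ with inverse $\tr(c_Q)$.
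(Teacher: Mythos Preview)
Your proof is correct and follows precisely the approach the paper indicates: the paper simply states that the result follows ``using the same argument we used in \cref{prop:shadow morita invariant} relying on \cref{thm:main theorem},'' and you have faithfully spelled out that argument with $\widehat\tr$ and \cref{cor:tr functor on iso} in place of $\widehat\chi$ and \cref{cor:chi functor on iso}. Your added bookkeeping remarks (identifying the composite with the explicit chain defining $\tr(u_Q)$, and identifying the inverse as $\tr(c_Q)$) go slightly beyond what the paper writes down but are in the same spirit.
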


It is thus possible to recover the Euler characteristic, the trace, and their Morita invariance properties from a careful analysis of $\biHH(\Adj)$ and $\biHH(\AdjEnd)$ and the universal cases. As a next step we would also like to understand how canonical these constructions are. Unfortunately, doing so involves explicitly computing these categories, which is quite technical and hence relegated to \cref{sec:thh two cat}. 

Let $\Lambda_\infty$ denote the \emph{paracyclic category} (see \cref{defn:paracyclic} for further details). We prove in \cref{thm:thh adj} that $\biHH(\Adj)$ is equivalent to the category $(\Lambda_\infty)^{\lhd\rhd}$ and deduce the following canonical strengthening of \cref{thm:chi functor}.

\begin{cor} \label{cor:euler canonical}
  The universal Euler characteristic $\widehat{\chi}_U$ is the \emph{unique} morphism from the initial to the terminal object in $\biHH(\Adj) \simeq (\Lambda_\infty)^{\lhd\rhd}$. Hence, the definition of the Euler characteristic is canonical.
\end{cor}

We give an intricate, yet explicit, description of $\biHH(\AdjEnd)$ in \cref{thm:thh adjend}, in particular proving $\biHH(\AdjEnd)$ has set of objects $\{[n]k \colon [n] \in \Obj_{\DD_+}, k \in \bN \} \cong \bN \times \bN$, implying the following non-trivial observation regarding the trace functor constructed in \cref{thm:tr functor}. 

\begin{cor} \label{cor:trace noncanonical}
  The universal trace $\widehat{\tr}_U$ is the morphism $\emptyset 1 \to [0]1$ in $\biHH(\AdjEnd)$ (\cref{thm:thh adjend}). As $\Aut(\emptyset 1)= \bZ$, it follows that $\widehat{\tr}$ is not canonically determined and is fixed only up to a choice of automorphism.
\end{cor}

The non-canonicity of the universal trance plays no role in \cref{cor:tr functor on iso}, but it is not clear whether the choice of automorphism might influence other future results.

\section{Shadows on \texorpdfstring{$(\infty,2)$}{(oo,2)}-categories} \label{sec:traces of infty two categories}

In this last section we move from bicategories to the realm of $\V$-enriched $\infty$-categories, with a particular focus on $(\infty,2)$-categories, which are the natural $\infty$-categorical analogue of bicategories. First in \cref{subsec:thh of enriched categories} we review the work of Berman defining Hochschild homology for arbitrary enriched $\infty$-categories. Then, in \cref{subsec:shadows on infty two cats}, we define $(\infty,2)$-categorical shadows and construct a first example (\cref{thm:thh shadow}), conjecturing further generalizations of interest (\cref{rmk:challenges}). We rely on the existing $\infty$-categorical literature \cite{lurie2009htt,lurie2017ha}, but review the most relevant definitions and results in a brief introductory section (\cref{subsec:foundations infinity categories}).

\subsection{\texorpdfstring{$\infty$}{oo}-category theoretical background} \label{subsec:foundations infinity categories}
Before we commence, we fix some $\infty$-categorical conventions. We use the terminology \emph{$\infty$-category} as a synonym for \emph{quasi-category}, one important model of $(\infty,1)$-categories, popularized by Joyal and Lurie \cite{joyal2008theory,lurie2009htt}. We denote the large $\infty$-category of small $\infty$-categories by $\cat_\infty$. The nerve functor $N$ from the category $\cat$ of small categories to that of simplicial sets enables us to see $\cat $ as a subcategory of $\cat_\infty$. We routinely suppress the functor $N$ to simplify notation. Recall the following, useful property of $N$ \cite[Proposition 1.2.3.1,Remark 1.2.3.2]{lurie2009htt}

\begin{lemma} \label{lemma:homotopy category}
  The product preserving functor $\pi_0\colon \S \to \set$ induces a product preserving left adjoint to the inclusion of $\cat $ into $\cat_\infty$ 
  $$\Ho\colon \cat_\infty \to \cat$$
  that takes each $\infty$-category to its \emph{homotopy category}.
\end{lemma}

For an $\infty$-category $\C$, let $\P(\C)$ denote the $\infty$-category of space-valued presheaves on $\C$. Following \cite[Proposition 5.1.3.1]{lurie2009htt}, it comes with a fully faithful functor $j\colon\C \to \P(\C)$ that has the following universal property, stated in \cite[Theorem 5.1.5.6]{lurie2009htt}.

\begin{lemma} \label{lemma:lan}
  Let $\C,\D$ be $\infty$-categories with $\D$ cocomplete. Then the following diagram admits a (homotopically) unique colimit-preserving lift
  \[ 
    \begin{tikzcd}
      \C \arrow[r, "F"] \arrow[d, "j"] & \D \\
      \P(\C) \arrow[ur, dashed, "\hat{F}"]
    \end{tikzcd} 
  \] 
\end{lemma}

We also review some essential facts regarding monoidal $\infty$-categories, following \cite{lurie2017ha}. For the next definition, let $\Fin_*$ denote the category of finite pointed sets $<n>$ and pointed maps. Moreover, for any functor $\C \to \Fin_*$, we denote the fiber over $<n>$ by $\C_{<n>}$.

\begin{defn}[{\cite[Definition 2.0.0.7]{lurie2017ha}}]
  A \emph{symmetric monoidal $\infty$-category} is a Cartesian fibration $\C^\otimes \to \Fin_*$ satisfying the Segal condition, meaning $\C^\otimes_{<n>} \simeq (\C^\otimes_{<1>})^n$.
  A \emph{symmetric monoidal functor} is a morphism of Cartesian fibrations over $\Fin_*$.
\end{defn}

\begin{rmk} \label{rem:monoidal structure}
  For a given symmetric monoidal $\infty$-category $\C^{\otimes} \to \Fin_*$, the fiber of $<1>$ is called the \emph{underlying $\infty$-category}, which we simply denote by $\C$. Moreover, the map $<2> \to <1>$ in $\Fin_*$ induces an $\infty$-functor $- \otimes - \colon \C_{<1>} \times \C_{<1>} \simeq \C_{<2>} \to \C_{<1>}$, giving us the monoidal structure on $\C$. The remainder of the Cartesian fibration encodes the relevant coherence data. 
\end{rmk}

Throughout this section we abuse notation and denote symmetric monoidal $\infty$-categories simply by the underlying $\infty$-category $\C$, leaving the remaining data implicit.  

Finally, we need terminology regarding presentable $\infty$-categories. Recall that an $\infty$-category is \emph{presentable} if it has small colimits and is accessible \cite[Definition 5.5.0.1]{lurie2009htt}. Presentability and monoidal structure combine as follows.

\begin{defn} \label{def:presentably monoidal}
  A \emph{presentably symmetric monoidal $\infty$-category} is a symmetric monoidal $\infty$-category $\C$ such that the underlying $\infty$-category is presentable, and the induced map $- \otimes - \colon \C \times \C \to \C$ (\cref{rem:monoidal structure}) preserves colimits in both variables.
\end{defn}

\subsection{\texorpdfstring{$\THH$}{THH} of enriched \texorpdfstring{$\infty$}{oo}-categories} \label{subsec:thh of enriched categories}
In this subsection we review the definition of $\THH$ of an enriched $\infty$-category, relying primarily  on work of Berman \cite{berman2022thh}. Before moving on to the enriched setting, we explain why we cannot simply generalize \cref{Def:thh of spectral cat} to the $\infty$-categorical world. By definition, an $\infty$-category $\C$ is a simplicial set satisfying additional conditions, the so-called ``inner horn lifting conditions" \cite{lurie2009htt}. These lifting conditions imply that for any two objects (i.e., $0$-simplices) $x$ and $y$, the simplicial set $\Map(x,y)$ given by the fiber of the map $\C^{\Delta^1} \to \C \times \C$ over the point $(x,y)$ is a Kan complex and thus a homotopy-meaningful mapping space. There is no direct composition map, however. Instead, for three objects $x,y,z$, there is a zig-zag 
$$\Map(x,y) \times \Map(y,z) \xleftarrow{ \simeq } \Map(x,y,z) \to \Map(x,z)$$
where $\Map(x,y,z)$ is the fiber of the map $\C^{\Delta^2} \to \C^3$ over $(x,y,z)$, and the lifting conditions imply that the first map is a trivial Kan fibration.

This observation has two unfortunate implications.
\begin{enumerate}
    \item We cannot just define an enriched $\infty$-category in terms of the existence of composition maps and thus need a more complicated notion of enrichment.
    \item We cannot just define $\THH$ via the cyclic bar construction, as the simplicial structure relies on the existence of a direct composition map.
\end{enumerate}

Fortunately, Gepner-Haugseng \cite{gepnerhaugseng2015enriched} and Hinich \cite{hinich2020yoneda} have developed notions of $\infty$-categories enriched in a monoidal $\infty$-category, both of which are based on an operadic approach to classical enriched category theory, generalized to the $\infty$-categorical setting. Their constructions are very powerful and have been used to prove deep results about enriched $\infty$-categories, but are also very intricate and can be difficult to use for computations.

Since we need a notion of enrichment only to study $\THH$, we can focus on the case where the enriching category is not just monoidal, but actually {\it symmetric monoidal}. 
There is a much more convenient way of defining enriched $\infty$-categories in the symmetric case, due to Berman \cite{berman2022thh}\footnote{This approach can be in fact generalized to encompass the monoidal case, however, we do not require this generality.}, which also provides a natural framework for a generalization of $\THH$.

The key idea is that of a {\it bypass operation}, which we define next. Recall that a {\it directed multigraph} is a pair of sets $\Gamma = (S,E)$  of vertices and of directed edges between them. For two multigraphs $\Gamma_1, \Gamma_2$ with the same set of vertices $S$, there is a  {\it bypass operation }from $\Gamma_1$ to $\Gamma_2$ if $\Gamma_2$ can be obtained from $\Gamma_1$ by a sequence of the following combinatorial moves. 
\begin{enumerate}
    \item Adding a loop
    \item Replacing a path of consecutive edges $e_1, ... , e_n$ by a single edge $e$.
\end{enumerate}

For a precise definition, see \cite[Definition 2.1]{berman2022thh}.

\begin{defn}
  Let $S$ be any set.  The {\it bypass category on $S$}, $\Bypass_S$, has as objects all directed multigraphs with fixed vertex set $S$ and a finite set of edges, and as morphisms all bypass operations between such.
\end{defn}
 
 Note that $\Bypass_S$ admits a symmetric monoidal product $\otimes$, where the set of edges of $\Gamma_1 \otimes \Gamma_2$ is the disjoint union of the sets of edges of $\Gamma_1$ and $\Gamma_2$, and the unit is the multigraph with no edges.
 
\begin{defn}\label{def:enriched infty cat}
 Let $\V$ be a symmetric monoidal $\infty$-category. A {\it $\V$-enriched $\infty$-category with set of objects $S$} is  a symmetric monoidal functor of $\infty$-categories 
 $$\C\colon \Bypass_S \to \V.$$
\end{defn}

The bypass category is designed to encode the structure of an enriched category. For example, if $(X_0,...,X_n)$ denotes the multigraph with vertex set $S$ and precisely $n$ edges, one from $X_i$ to $X_{i+1}$ for every $0 \leq i < n$, then 
$$\C(X_0, ..., X_n) \in \V$$
can be viewed as the object in $\V$ of sequences of $n$ composable maps $X_0 \to ... \to X_n$. In particular $\C(X_0,X_1)$ plays the role of the mapping object. We can then use a bypass operation of type (2) above to obtain the desired composition map 
$$\C(X_0,X_1,X_2) \to \C(X_0,X_2).$$
The following result should thus not be very surprising.

\begin{prop}[{\cite[Proposition 2.7]{berman2022thh}}]
 If $\V$ is a symmetric monoidal $\infty$-category, then the definition of $\V$-enriched $\infty$-category given here agrees with that in \cite{gepnerhaugseng2015enriched}.
\end{prop}

\begin{rmk}
 Intuitively, Gepner and Haugseng define $\V$-enriched $\infty$-categories as certain algebras in $\V$ of a suitably chosen colored operad $\O_S^\otimes$ \cite{gepnerhaugseng2015enriched}. However, as the target $\infty$-category is symmetric monoidal, such algebras are equivalent to symmetric monoidal functors out of the symmetric monoidal envelope of $\O_S^\otimes$, which Berman proves to be $\Bypass_S$, permitting the simplification of the structure of an enriched $\infty$-category.  
\end{rmk}

We can use this simplified definition of an enriched $\infty$-category (and in particular the fact that $\Bypass_S$ is a category) to define $\THH $ of a $\V$-enriched $\infty$-category as follows. Let $\Lambda$ be the category with finite cyclically ordered sets $0<1<...<n<0$, for $n\geq 0$, as objects  and maps that respect the cyclic structure as morphisms. (For more details see \cite[Definition 3.3]{berman2022thh}.) 

Let 
$$(\O_{hh_\V})_\bullet\colon \Lambda \to \P(\Bypass_S)$$ be the functor that takes a cyclic set $0<1<...<n<0$ to the coproduct of representables 
$$\coprod_{X_0,... ,X_n \in S}(X_0,X_1,...,X_n,X_0),$$
and let 
$$\O_{hh_\V}=\colim_{\DD^{op}} (\O_{hh_\V}|_{\DD^{op}})_\bullet \in \P(\Bypass_S).$$ 
We suppress the choice of the set $S$ of objects from the notation $(\O_{hh_\V})_\bullet$ and $\O_{hh_\V}$.

As Berman explains, \cref{lemma:lan} implies that any $\V$-enriched $\infty$-category $\C\colon \Bypass_S \to \V$ extends to a colimit-preserving functor 
\[
 \begin{tikzcd}
  \Bypass_S \arrow[r, "\C"] \arrow[d, "j"] & \V, \\
  \P(\Bypass_S) \arrow[ur, "\C_*"']
 \end{tikzcd}
\]
which allows us to formulate the following generalization of $\THH$, providing a way of associating an object in $\V$ to any $\V$-enriched $\infty$-category. 

\begin{defn}[{\cite[Definition 4.1]{berman2022thh}}]
 \label{def:thh of enriched cat}
 Let $\C$ be a $\V$-enriched $\infty$-category. The {\it $\V$-enriched Hochschild homology} of $\C$ is 
 $$\HH_\V(\C) = \C_*(\O_{hh_\V}) \in \mathrm{Obj} \V.$$
\end{defn}

\begin{rmk}
 The construction $\HH_\V$ is simply denoted by $\THH$ in \cite{berman2022thh}, however, as we analyze Hochschild homology with several different enrichments, we have chosen notation that makes the enrichment explicit.
\end{rmk}
Tracing through the definitions, since $\C_*$ preserves colimits, we see that $\HH_\V(\C)$ is the colimit of a cyclic diagram that at level $n$ is equivalent to 

$$\coprod_{X_0,...,X_n \in S} \C(X_0,...,X_n,X_0) \simeq \coprod_{X_0,..., X_n} \C(X_0,X_1) \otimes ... \otimes \C(X_n,X_0)$$
because $\C$ is symmetric monoidal. 

\begin{ex}
 If $\V = (\Sp, \wedge)$, the symmetric monoidal $\infty$-category of spectra with the smash product, then a $\V$-enriched $\infty$-category $\C$ is a spectral $\infty$-category, and  $\HH_{\Sp}(\C)$ is the $\infty$-categorical analogue of \cref{Def:thh of spectral cat}, and we therefore denote it simply by $\THH(\C)$.
\end{ex}

\begin{ex}
 If $\V=(\S,\times)$, the symmetric monoidal $\infty$-category of spaces, then $\V$-enriched $\infty$-categories are precisely 
 non-enriched $\infty$-categories. More concretely, we get complete Segal spaces \cite{rezk2001css}, as follows from \cite[Theorem 4.4.7, Remark 5.3.10]{gepnerhaugseng2015enriched}. We hence denote $\HH_\S$ by $\HHinfty$.
\end{ex}

\begin{ex} \label{ex:two cats}
 If $\V= (\cat, \times)$, then a $\V$-enriched $\infty$-category $\C$ a precisely a \emph{bicategory} \cite[Definition 6.1.1]{gepnerhaugseng2015enriched}, \cite[Proposition 2.7]{berman2022thh}.  By construction, if $\C$ is an bicategory, then  $\HH_{\cat}(\C)$ is a category.
\end{ex}
 
When applied to bicategories, the definition of $\HH_{\cat}$ does indeed coincide with that of $\biHH$ (\cref{def:biHH}), as the following proposition makes explicit.
 
\begin{prop}\label{prop:bicat}
	For every bicategory $\B$, there is an equivalence $\HH_{\cat}(\B) \simeq \biHH(\B)$, meaning $\HH_{\cat}$ is also the colimit of \cref{eq:thh bicat} in the $(2,1)$-category $\cat$.
\end{prop}
		
		\begin{proof}
	Since $\cat$ is a $(2,1)$-category, for any simplicial diagram $F\colon \DD^{op} \to \cat$, there is an equivalence 
	$$\colim ( \DD^{op} \xrightarrow{ F } \cat) \simeq \colim ( (\DD_{\leq 2})^{op} \xrightarrow{ i^{op} } \DD^{op} \xrightarrow{ F } \cat)$$ 
		where $i\colon \DD_{\leq 2} \to \DD$ is the natural inclusion map of the full subcategory with objects $[0],[1]$ and $[2]$ (see \cite[Lemma 1.3.3.10]{lurie2017ha}).  
	\end{proof}

We can leverage this abstract description of $\biHH$ and formal homotopical methods to compute $\biHH$ of bigroupoids. Doing so requires careful understanding of how bigroupoids relate to homotopy types, which is the content of the next remark. In this remark $\Grpd$ denotes the $(2,1)$-category of groupoids and $\Grpd_2$ denotes the $(3,1)$-category of bigroupoids. 

\begin{rmk}
 By \cite[Corollary 6.1.10]{gepnerhaugseng2015enriched}, there is an adjunction of $\infty$-categories
		\begin{equation} \label{eq:truncated spaces}
						\begin{tikzcd}[row sep=0.5in, column sep=0.5in]
						\S \arrow[r, shift left=1.8, "\Pi_{\leq 1}", "\bot"'] & \Grpd, \arrow[l, shift left=1.8, "N"] 
						\end{tikzcd}
			\end{equation}
			where $\Grpd$ denotes the $(2,1)$-category of groupoids, of which the right adjoint is fully faithful, with essential image given by $1$-truncated spaces (spaces with trivial homotopy groups above degree $1$). The right adjoint takes a groupoid $\G$ to a Kan complex of which the $0$-cells are the objects of $\G$, while for any two objects $X,Y$, there is an isomorphism of sets $\Map_{N\G}(X,Y) \cong \G(X,Y)$, where $\Map_{N\G}(X,Y)$ denotes the space of paths from $X$ to $Y$ (which in this case happens to be discrete as $N\G$ is $1$-truncated).
				
				The same corollary (\cite[Corollary 6.1.10]{gepnerhaugseng2015enriched}) implies that there is an adjunction
		\begin{equation} \label{eq:twotruncated spaces}
						\begin{tikzcd}[row sep=0.5in, column sep=0.5in]
						\S \arrow[r, shift left=1.8, "\Pi_{\leq 2}", "\bot"'] & \Grpd_2 \arrow[l, shift left=1.8, "N_2"] 
						\end{tikzcd}
			\end{equation}
	of which the right adjoint is fully faithful, with essential image given by $2$-truncated spaces. By \cite[Lemma 6.1.9]{gepnerhaugseng2015enriched} the functor $N_2$ takes a bigroupoid $\G$ to a Kan complex $N_2\G$, of which the $0$-cells are the objects of $\G$, while for any two objects $X,Y$, there is an equivalence of groupoids 
		$$\Pi_1\Map_{N_2\G}(X,Y) \simeq \G(X,Y).$$
	\end{rmk}
	
	We can use the adjunction above to compute $\biHH$ of a bigroupoid.
	
	\begin{ex} \label{ex:thh groupoids}
	For any bigroupoid $\G$, there is an equivalence 
		$$\biHH(\G) = |\coprod_{X_0,...,X_n} \G(X_0,X_1,...,X_0)| \simeq  |\coprod_{X_0,...,X_n} \Pi_1\Map_{N_2\G}(X_0,X_1,...,X_0)|$$
		$$ \simeq \Pi_1 |\coprod_{X_0,...,X_n} \Map_{N_2\G}(X_0,X_1,...,X_0)|$$
		where $\Map_{N_2\G}(X_0,X_1,...,X_0)$ denotes the space of loops through the vertices $X_0,X_1,...,X_n$ and the equivalence follows from the previous remark and the fact that $\Pi_1$ commutes with colimits. The geometric realization of this diagram of spaces is known as {\it unstable topological Hochschild homology} and has been computed to be the free loop space $(N_2\G)^{S^1}$ \cite[Corollary 4, Page 858]{hesselholtscholze2019thhoberwolfbach}. Since $N_2$ is fully faithful and $N_2 B\mathbb{Z} = S^1$, it follows that 
		$$\biHH(\G) \simeq (N_2\G)^{S^1} = (N_2\G)^{N_2B\mathbb{Z}} \simeq \Fun(\B\mathbb{Z},\G),$$ where the last equivalence follows from the fact that $N_2$ is fully faithful.
	\end{ex}

\begin{ex} \label{ex:infty two cats}
 If $\V= (\cat_\infty, \times)$, then we call a $\V$-enriched $\infty$-category $\C$ an {\it $(\infty,2)$-category}.  Note that if $\C$ is an $(\infty,2)$-category, then  $\HH_{\cat_\infty}(\C)$ is an $\infty$-category, and we hence use the notation $\biHHinfty(\C)$.
\end{ex}

\begin{rmk}
 There are many models of $(\infty,2)$-categories in the literature, such as $2$-complicial sets \cite{verity2008complicial}, $2$-fold complete Segal spaces \cite{barwick2005nfoldsegalspaces}, and $\Theta_2$-spaces \cite{rezk2010thetanspaces}. 
 The model of $(\infty,2)$-categories constructed in \cref{ex:infty two cats} corresponds to $2$-fold complete Segal spaces, as follows from the proof in \cite[Theorem 4.4.7, Remark 5.3.10]{gepnerhaugseng2015enriched}, with the $\infty$-category of complete Segal spaces $\CSS$ replacing the $\infty$-category $\S$.
\end{rmk}

\begin{rmk} \label{rem:trace}
 Let $\C$ be a $\V$-enriched $\infty$-category with set of objects $S$, and let $\tau\colon \HH_\V(\C)\to V$ be a morphism in $\V$. From $\tau$ we can derive a family of morphisms in $\V$
 $$\{\tau_{X}\colon \C(X, X) \to V\mid X\in S\}.$$
The cyclic structure of $(\O_{hh_\V})_\bullet$ implies that
$$\xymatrix{\C(X_0,X_1)\otimes \C(X_1, X_0)\simeq \C(X_0,X_1,X_0)\ar[r] \ar @<65pt> [dd]& \C(X_0,X_0)\ar [d]^{\tau_{X_0}}\\
&V\\
\C(X_1,X_0)\otimes \C(X_0, X_1)\simeq \C(X_1,X_0,X_1)\ar [r] &\C(X_1,X_1)\ar[u]_{\tau_{X_1}}
}$$
commutes for all $X_0,X_1\in S$, where the horizontal ``composition'' maps arise from bypasses, and the left hand vertical arrow is given by the cyclic structure.  In other words, the value of $\tau$ on a composite of two 1-cells that can be composed in either order is independent of the order of composition, just as the trace of a product of two matrices that can be multiplied in either order is independent of the order of multiplication.
\end{rmk}

This observation justifies the terminology in the next definition. 

\begin{defn}\label{defn:trace}
 Let $\C$ be a $\V$-enriched $\infty$-category, and let  $V$ be an object in $\V$.  A {\it $\V$-trace} out of $\C$ into $V$ is  a morphism $\HH_\V(\C) \to V$ in $\V$.
\end{defn}

\subsection{Shadows on \texorpdfstring{$(\infty,2)$}{(oo,2)}-categories} \label{subsec:shadows on infty two cats}
We are finally in a position to generalize shadows to $(\infty,2)$-categories. To motivate our definition, we first analyze the nature of shadows on the homotopy bicategory of a $(\infty,2)$-category with values in the homotopy category of an $(\infty, 1)$-category. 

Let $\B$ be an $(\infty,2)$-category specified by a symmetric monoidal functor
$$\B \colon \Bypass_{\B_0}\to \cat_\infty.$$
Its \emph{homotopy bicategory} $\Ho_2(\B)$ is the bicategory specified by the composite
$$\Bypass_{\B_0}\xrightarrow{\B} \cat_\infty\xrightarrow{\Ho}\cat.$$
This composite does indeed give rise to a bicategory (i.e., enriched over $\cat$ in the sense of \cref{def:enriched infty cat}), since $\Ho$ preserves finite products and thus is symmetric monoidal (\cref{lemma:homotopy category}).

\begin{prop} \label{prop:shadow on infty bicat}
Let $\B$ be an $(\infty,2)$-category and $\D$ an $(\infty,1)$-category. There is an equivalence 
$$\Sha\big(\Ho_2(\B),\Ho(\D)\big)  \simeq \Fun_\infty\big(\biHHinfty(\B),\Ho(\D)\big),$$
where on the right-hand side we implicitly view $\Ho(\D)$ as an $(\infty, 1)$-category via the nerve functor.
\end{prop}

\begin{proof}
There are equivalences 
 $$\Sha\big(\Ho_2(\B),\Ho(\D)\big) \simeq \Fun\big(\biHH(\Ho_2(\B)),\Ho(\D)\big) \simeq \Fun_\infty\big(\biHHinfty(\B),\Ho(\D)\big).$$
 The first equivalence is an immediate consequence of \cref{thm:main theorem}.
 The second follows from the fact that $\Ho$ commutes with colimits as a left adjoint and so 
 $$\biHH(\Ho_2(\B)) \simeq \Ho(\biHHinfty(\B)),$$
together with the fact that $\Ho$ is left adjoint to the nerve functor (\cref{lemma:homotopy category}).
\end{proof}

\begin{notn} Given a shadow $\shadow{-}$ on  $\Ho_2(\B)$ taking values in $\Ho(\D)$, we denote the corresponding $\infty$-functor from $\biHHinfty(\B)$ to $\Ho(\D)$ by $\widehat{\shadow{-}}$. 
\end{notn}

In particular, there is a localization functor 
$$\Fun_\infty(\biHHinfty(\B), \D) \to \Sha\big(\Ho_2(\B),\Ho(\D)\big)$$
that takes a trace of $(\infty,2)$-categories to a shadow.

Motivated by \cref{thm:main theorem} and the analysis above, we formulate the following definition. 

\begin{defn} \label{def:shadow infty}
  A \emph{shadow} on an $(\infty,2)$-category $\B$ with values in an $(\infty,1)$-category $\D$ is a $\cat_\infty$-trace (\cref{defn:trace}), i.e.,  an $\infty$-functor 
  \[\biHHinfty(\B) \to \D,\]
  where $\biHHinfty(\B)$ is constructed as in \cref{def:thh of enriched cat}.  
\end{defn}
This definition can be viewed as a homotopy coherent lift of the definition of a shadow on a bicategory. It is natural to ask when $(\infty,2)$-categorical shadows exist and how to construct them. In the important special case of $\V$-enriched categories and their bimodules, we conjecture that there should be an analogue of the Hochschild shadow for spectral categories, building on the following result by Haugseng.

\begin{thm}[{\cite[Theorem 1.2]{haugseng2016bimodules}}] \label{thm:vbimodules}
  Let $\V$ be a presentably symmetric monoidal $\infty$-category (\cref{def:presentably monoidal}). There exists an $(\infty,2)$-category $\Mod_\V$ with
  \begin{itemize}
      \item objects $\V$-enriched $\infty$-categories,
      \item $1$-morphisms given by $\V$-bimodules,
      \item $2$-morphisms given by morphisms of $\V$-bimodules.
  \end{itemize}
\end{thm}

 We conjecture that the functors $\HH_\V$ assemble into a shadow of $(\infty,2)$-categories on $\Mod_\V$ with values in $\V$. See \cref{conj:main} for a precise formulation.

While we cannot yet prove this conjecture, we take an important first step below and construct a functor to $\Ho \V$ (\cref{thm:thh shadow}), then discuss some technical challenges that arise when trying to lift to an $(\infty, 2)$-categorical shadow (\cref{rmk:challenges}). 

\begin{rmk} \label{rem:functoriality thh v}
    As we have not established functoriality of $\biHH_\infty$, we cannot yet conclude that an $(\infty,2)$-categorical shadow (\cref{def:shadow infty}) satisfies Morita invariance, along the lines of \cref{thm:chi functor}. However, once we have established functoriality, Morita invariance will follow immediately by an argument analogous to the one given above in the case of bicategories. In fact, this generalized argument would even imply Morita invariance of $\HH_\V$ for $\V$ an arbitrary $\infty$-category.  
\end{rmk}

We begin by relativizing Berman's approach to $\V$-enriched $\infty$-categories \cite{berman2022thh} in order to describe $\V$-enriched bimodules.

\begin{defn}
Let $S,T$ be two sets. A {\it directed graph from $S$ to $T$} is a directed graph with vertex set $S \coprod T$, set of edges $E$, and source and target maps $s,t\colon E \to S \coprod T$ such that if $t(e) \in S$, then $s(e) \in S$, i.e., there are no edges starting in $T$ and ending in $S$.
\end{defn}

\begin{defn}
Let $\Bypass_{S,T}$ be the full subcategory of $\Bypass_{S \coprod T}$ with objects the directed graphs from $S$ to $T$.
\end{defn}

This full subcategory is still symmetric monoidal, as graphs from $S$ to $T$ are closed under disjoint union, and the empty graph on $S \coprod T$ is in particular a graph from $S$ to $T$. 

The following notation for objects in $\Bypass_{S,T}$ proves useful below.

\begin{notn} \label{notn:bypassst}
 Let $X,Y \in S \coprod T$, where if $Y \in S$, then $X \in S$. There exists a graph with a unique edge from $X$ to $Y$, which we denote $(X,Y)$. More generally, let $(X_0,X_1, ... , X_n)$ be the graph with a single path $X_0 \to X_1 \to ... \to X_n$, where if $X_i \in S$, then $X_j \in S$ for all $j <i$.
\end{notn}

As $\Bypass_{S,T}$ is defined as a full symmetric monoidal subcategory of $\Bypass_{S\coprod T}$, the explanation following \cite[Definition 2.3]{berman2022thh} provides an explicit description of $\Bypass_{S,T}$ via generators and relations as follows.
\begin{itemize}
    \item Objects are pairs $(X,Y)$ of elements of $S\coprod T$, where if $Y \in S$, then $X \in S$.
    \item Morphisms are generated by $(X,Y,Z) = (X,Y) \otimes (Y,Z) \to (X,Z)$, which exists for every triple $X,Y,Z \in S \coprod T$, where $Z \in S$ implies $X,Y \in S$, and $Y \in S$ implies $X \in S$.
    \item For every $X \in S \coprod T$, there is an identity morphism $\emptyset \to (X,X)$
    \item The associativity and unitality relations of \cite[Definition 2.3]{berman2022thh}.
\end{itemize}

There is an evident inclusion functor $\Bypass_S \coprod \Bypass_T \to \Bypass_{S,T}$. We can use this to define our desired modules.

\begin{defn} \label{defn:bimodules}
 Let $\C\colon \Bypass_S \to \V$ and $\D\colon \Bypass_T \to \V$ be two $\V$-enriched $\infty$-categories. A {\it $\V$-enriched $(\C,\D)$-bimodule} is a symmetric monoidal functor $\M\colon \Bypass_{S,T} \to \V$ that makes the following diagram commute.
 \begin{center}
     \begin{tikzcd}[column sep=1in]
       \Bypass_S \arrow[dr, "\C"] \arrow[d, hookrightarrow] & \\
       \Bypass_{S,T} \arrow[d, hookleftarrow] \arrow[r, dashed, "\M" description] & \V \\
       \Bypass_T  \arrow[ur, "\D"']
     \end{tikzcd}
 \end{center}
\end{defn}

It is helpful to unpack this definition somewhat. The objects in $\Bypass_{S,T}$ can be classified into three distinct types: graphs whose edges all start and end at elements of $S$, graphs whose edges all start and end at elements of $T$, and graphs that have an edge that starts in $S$ and ends in $T$. The first two types of graph lie in the essential image of the inclusion functor $\Bypass_S \coprod \Bypass_T \to \Bypass_{S,T}$, and so the value of $\M$ on those graphs is predetermined. In particular for every $X_0,..., X_n \in S$,
$$\M(X_0,...,X_n) = \C(X_0,...,X_n),$$ 
and for every $X_0,...,X_n \in T$,
$$\M(X_0,...,X_n) = \D(X_0,...,X_n).$$ 
On the other hand if $X_0 \in S$ and $X_1 \in T$, then there are no constraints on the object $\M(X_0,X_1)$ in $\V$.

In $\Bypass_{S,T}$, for every $X_0, ..., X_n \in S$ and $X_{n+1}\in T$, there a bypass morphism $(X_0,..., X_{n+1}) \to (X_0,X_{n+1})$, the image of which under $\M$ is a map 
$$\C(X_0,X_1) \otimes ... \otimes\C(X_{n-1},X_n) \otimes \M(X_n,X_{n+1}) \to \M(X_0,X_{n+1}).$$ 
Similarly for $X_0 \in S$ and $X_1, ... , X_{n+1} \in T$, there is a map
$$\M(X_0,X_1)\otimes \D(X_0,X_{1}) \otimes ... \otimes \D(X_n,X_{n+1}) \to \M(X_0,X_{n+1}).$$ 
This is precisely the expected structural data of a bimodule. The composition rules in $\Bypass_{S,T}$ guarantee that these bimodule actions satisfy the appropriate coherence conditions.

Having defined bimodules, we also need a suitable notion of morphism. 

\begin{defn} \label{defn:bimodule morphism}
  Let $\C\colon \Bypass_S \to \V$ and $\D\colon \Bypass_T \to \V$ be $\V$-enriched $\infty$-categories, and let $\M$ and $\N$ be $(\C,\D)$-bimodules. A {\it morphism of bimodules} is a natural transformation $\alpha\colon \M \to \N$ such that $\alpha$ restricts to the identity on $\Bypass_S$ and $\Bypass_T$.
 \end{defn}

Before we proceed, it is important to confirm that these two definitions given here match with the existing literature. Bimodules of $\V$-enriched $\infty$-categories have been studied extensively by Haugseng \cite{haugseng2016bimodules}. We show that the definition above does indeed coincide with that of Haugseng, by a slight generalization of the argument given by Berman in \cite[Proposition 2.7]{berman2022thh}.

\begin{prop}
 The notions of bimodules formulated in \cref{defn:bimodules} and of morphism of bimodules \cref{defn:bimodule morphism} coincide with those of Haugseng \cite[Definition 4.3]{haugseng2016bimodules}.
\end{prop}

\begin{proof}
We first establish that \cref{defn:bimodules} coincides with that of Haugseng, by following the outline of the proof of \cite[Proposition 2.7]{berman2022thh}. First recall that a bimodule is an algebra on the non-symmetric $\infty$-operad $\DD^{op}_{S,T}$ (described explicitly in \cite[Definition 4.1]{haugseng2016bimodules}). Because $\V$ is symmetric monoidal, we can use the symmetric monoidal envelope of $\DD^{op}_{S,T}$, given by the active morphisms \cite[Construction 2.2.4.1]{lurie2017ha}. 
 
As in the proof of \cite[Proposition 2.7]{berman2022thh}, the active morphisms with codomain a pair $(A,B)$ of elements of $S\coprod T$ are of the form
$$(X_1,Y_1) \otimes ... \otimes (X_n,Y_n) \to (A,B).$$
It follows that the symmetric monoidal envelope of $\DD_{S,T}^{op}$ is given by $\Bypass_{S,T}$ and so the first claim follows from the universal property of symmetric monoidal envelopes \cite[Proposition 2.2.4.9]{lurie2017ha}.

We next analyze \cref{defn:bimodule morphism}. By \cite[Definition 4.3]{haugseng2016bimodules}, a morphism of bimodules is precisely a morphism of $\DD^{op}_{S,T}$-algebras, which, by \cite[Proposition 2.2.4.9]{lurie2017ha}, corresponds to a symmetric monoidal natural transformation out of the symmetric monoidal envelope of $\DD_{S,T}^{op}$, i.e., natural transformations out of $\Bypass_{S,T}$. This is precisely the content of \cref{defn:bimodule morphism} and hence establishes the second claim.
\end{proof}

We are now almost ready to define $\THH$ for $\infty$-categorical bimodules but need one last definition.

\begin{defn} \label{defn:bypasscircle}
 For $S,T$ two sets, define the symmetric monoidal category $\Bypass_{S\circlearrowleft T}$ as the pushout of symmetric monoidal categories
 \begin{center}
     \begin{tikzcd}
       \Bypass_S \coprod \Bypass_T \arrow[r] \arrow[d] & \Bypass_{S,T} \arrow[d] \\
       \Bypass_{T,S} \arrow[r] & \Bypass_{S\circlearrowleft T}.
     \end{tikzcd}
 \end{center}
\end{defn}

Let $\C\colon\Bypass_S \to \V$ be a $\V$-enriched $\infty$-category and $\M\colon \Bypass_{S,S} \to \V$ a $(\C,\C)$-bimodule. To distinguish between the two copies of $S$, we denote an element $X \in S$ by $X'$ if it is in the second copy. By the universal property of pushouts, $\M$ lifts to a functors $\Bypass_{S\circlearrowleft S} \to \V$, which we also denote by $\M$, to simplify notation.

Let $\M^*\colon \P(\Bypass_{S\circlearrowleft S}) \to \V$ be the left Kan extension of $\M$. Let $(\O_{hh_\V})_\bullet\colon\DD^{op} \to \P(\Bypass_{S\circlearrowleft S})$ be the functor defined in level $n$ by 
$$(\O_{hh_\V})_n = \coprod_{X_0,..., ,X_n \in S} (X_0,X_1,...,X_n,X_0'),$$
where we are using \cref{notn:bypassst}.

\begin{defn}\label{def:thh bimod}
For any $\V$-enriched $\infty$-category $\C$ and $(\C,\C)$-bimodule $\M$, the {\it $\V$-enriched Hochschild homology of $\C$ with coefficients in $\M$}, denoted $\HH_\V(\C,\M)$, is the object in $\V$ that is the colimit of $\M^*\circ(\O_{hh_\V})_\bullet$.
\end{defn}

Since $\M^*$ preserves colimits, $\HH_\V(\C,\M)$ is equivalent to the colimit of the simplicial diagram with $n^{\text{th}}$ level 
$$\coprod_{X_0,..., X_n} \C(X_0,X_1) \otimes ... \otimes \C(X_{n-1},X_n) \otimes \M(X_n,X_0).$$

The construction of $\HH_\V(\C,\M)$ is natural in the coefficient bimodule, i.e., it extends to an $\infty$-functor $$\HH_\V(\C,-)\colon\Mod_\V(\C,\C) \to \V$$ for every $\V$-enriched category $\C$. Indeed, for any $\C$-bimodule morphism $\alpha\colon \M \to \N$, we can define $\HH_\V(\C,\alpha)$ by 
 $$\HH_\V(\C,\alpha)=\colim_{\DD^{op}}\alpha^* \circ \O_{hh_\V}\colon \colim_{\DD^{op}} \M^* \circ \O_{hh_\V} \to \colim_{\DD^{op}}\N^* \circ \O_{hh_\V}. $$
 In other words, $\HH_\V(\C,-)$ is given by  the following composite of $\infty$-functors.
$$
   \Fun(\Bypass_{S\circlearrowleft S},\V) \xrightarrow{\textrm{Lan}} \Fun\big(\P(\Bypass_{S\circlearrowleft S}),\V\big)  \xrightarrow{(\O_{hh_\V})^*} \Fun(\DD^{op},\V)  \xrightarrow{\colim} \V  
$$

We are now ready to generalize \cref{rmk:thh trace} and show that the collection of induced functors on the homotopy categories 
\[\big\{\HH_\V(\C,-)\colon \Ho_2(\Mod_\V)(\C,\C) \to \Ho\V\mid \C \text{ a $\V$-enriched $\infty$-category}\big\}\]
underlies a shadow, for every presentably symmetric monoidal $\V$.

\begin{thm} \label{thm:thh shadow} 
 Let $\V$ be a presentably symmetric monoidal $(\infty,1)$-category and $\Mod_\V$ the $(\infty,2)$-category of $\V$-bimodules (\cref{thm:vbimodules}). There is a functor of $\infty$-categories
 \[\widehat{\HH}_\V\colon \biHHinfty(\Mod_\V) \to \Ho\V\]
 such that the corresponding shadow on $\Ho_2(\Mod_\V)$ (cf. \cref{prop:shadow on infty bicat}) sends a $\C$-bimodule $\M$ to $\HH_\V(\C,\M)$ (\cref{def:thh bimod}).
\end{thm}

\begin{proof} 
We need only to define the twisting isomorphism for the collection of functors $$\big\{\HH_\V(\C,-)\colon \Ho_2(\Mod_\V)(\C,\C) \to \Ho\V\mid \C \text{ a $\V$-enriched $\infty$-category}\big\}.$$ To do so, we adapt the Dennis-Morita-Waldhausen argument \cite[Proposition 6.2]{blumbergmandell2012thh}, which is also used by Campbell and Ponto \cite[Theorem 2.17]{campbellponto2019thh}, to the $\infty$-categorical setting. 

The first step towards defining the twisting isomorphism consists of the following bisimplicial construction.
 Let $(\O^{S\circlearrowleft T}_{thh})_{\bullet\bullet}\colon \DD^{op} \times \DD^{op} \to \P(\Bypass_{S\circlearrowleft T})$ be the functor that takes $([n],[m])$ to 
  $$\coprod_{X_0,...,X_n \in S, Y_0, ... , Y_m \in T} (X_0,..., X_n,Y_0,..., Y_m,X_0),$$
  and let $\O^{S\circlearrowleft T}_{thh}$ be the colimit of this bisimplicial diagram in $\P(\Bypass_{S\circlearrowleft T})$.
  Similarly, define  $(\O^{T\circlearrowleft S}_{thh})_{\bullet\bullet}\colon \DD^{op} \times \DD^{op} \to \P(\Bypass_{S\circlearrowleft T})$ as the functor with value
  $$\coprod_{X_0,...,X_n \in S, Y_0, ... , Y_m \in T} (Y_0,..., Y_m,X_0,..., X_n,Y_0),$$
 on $([n],[m])$ and $\O^{T\circlearrowleft S}_{thh}$ as its colimit in $\P(\Bypass_{S\circlearrowleft T})$.
  
The symmetric monoidal structure of the bypass categories provides us with a canonical isomorphism
   $$(X_0,..., X_n,Y_0,..., Y_m,X_0) \overset{\cong}{\longrightarrow} (Y_0,..., Y_m,X_0,..., X_n,Y_0),$$
   which implies that there is an equivalence
   \begin{equation}\label{eq:last one}
     \O^{S\circlearrowleft T}_{thh} \overset{\cong}{\Longrightarrow} \O^{T\circlearrowleft S}_{thh}.  
   \end{equation}
   in $\P(\Bypass_{S\circlearrowleft T})$.
  
 We are now ready to define the desired twisting isomorphism. Let 
 $$\C\colon\Bypass_S \to \V \quad\text{and}\quad \D\colon\Bypass_T \to \V$$ be two $\V$-enriched $\infty$-categories, and let $\M\colon \Bypass_{S,T} \to \V$ be a $(\C,\D)$-bimodule and $\N\colon \Bypass_{T,S} \to \V$ a $(\D,\C)$-bimodule. 
The universal property of the pushout implies that these data give rise to a functor 
$$\M \otimes \N\colon \Bypass_{S\circlearrowleft T} \to \V,$$ 
which we can left Kan extend to a functor 
$$(\M\otimes\N)^*\colon \P(\Bypass_{S\circlearrowleft T}^{op}) \to \V.$$ 
Note that $(\M\otimes\N)^*(\O^{S\circlearrowleft T}_{thh})$ is the colimit of the bisimplicial object 
 $$(\M\otimes\N)^* \circ (\O^{S\circlearrowleft T}_{thh})_{\bullet\bullet}\colon \DD^{op} \times \DD^{op} \to \V$$
that takes $([n],[m])$ to 
  $$\coprod_{X_0,...,X_n \in S, Y_0, ... , Y_m \in T} \C(X_0,X_1) \otimes ... \otimes \C(X_{n-1},X_n) \otimes \M(X_n,Y_0) \otimes \D(Y_0,Y_1) \otimes ... \otimes \D(Y_{m-1},Y_m)\otimes \N(Y_m,X_0).$$
  
Fix an object $[n]$ in $\DD$. By \cite[End of Section 5]{haugseng2016bimodules}, the colimit of the simplicial diagram with $n^{\text{th}}$ level
 \begin{align*}
   &(\M\otimes\N)^* \circ (\O^{S\circlearrowleft T}_{thh})_{n\bullet}=  \\
   &\coprod_{X_0,...,X_n \in S, Y_0, ... , Y_\bullet \in T} \C(X_0,X_1) \otimes ... \otimes \C(X_{n-1},X_n) \otimes \M(X_n,Y_0) \otimes \D(Y_0,Y_1) \otimes ... \otimes \N(Y_\bullet,X_0)
 \end{align*}
 is  
 $$\coprod_{X_0,...,X_n \in S} \C(X_0,X_1) \otimes ... \otimes \C(X_{n-1},X_n) \otimes (\M \otimes \N)(X_n,X_0),$$
 where $\M\otimes\N$ is the $(\C,\C)$-bimodule obtained via tensor product of $\M$ and $\N$ as defined in \cite[Remark 5.4]{haugseng2016bimodules}.
 By \cref{def:thh bimod} the colimit of this simplicial object is precisely $\HH_\V(\C,\M \otimes \N)$.
 Repeating the same argument with the roles of $\M$ and $\N$ reversed, we deduce that $\HH_\V(\D,\N\otimes \M)$ can be obtained as the colimit of the bisimplicial diagram of the form  
 $$\coprod_{X_0,...,X_n \in S, Y_0, ... , Y_m \in T} \D(Y_0,Y_1) \otimes ... \otimes \D(Y_{n-1},Y_m) \otimes \N(Y_m,X_0) \otimes \C(X_0,X_1) \otimes ... \otimes \M(X_n,Y_0),$$
i.e.,  $(\M\otimes\N)^*(\O^{T\circlearrowleft S}_{thh})$. The equivalence in \cref{eq:last one} thus implies that
 $\HH_\V(\D,\N\otimes \M)$ and $\HH_\V(\C, \M\otimes\N)$ are equivalent in $\V$ and therefore isomorphic in $\Ho \V$, as desired.
 
 Finally, as mentioned in the proof of \cite[Theorem 2.17]{campbellponto2019thh}, it is straightforward to check that this isomorphism satisfies the two compatibility conditions of \cref{defn:shadow}, and hence we can conclude. 
\end{proof}

We can apply \cref{{thm:thh shadow}} to the case of spectrally enriched $\infty$-categories  and thus, in particular, to stable $\infty$-categories, since every stable $\infty$-category is in fact enriched over $\Sp$ \cite[Example 7.4.14]{gepnerhaugseng2015enriched}.

\begin{cor}
There is a functor of $\infty$-categories
 $$\widehat{\THH}\colon \biHHinfty(\Mod_\Sp) \to \Ho\Sp$$
 such that that the corresponding shadow on $\Mod_\Sp$ (cf. \cref{prop:shadow on infty bicat}) sends any spectral bimodule $\M$ to $\THH(\C,\M)$.
\end{cor}

With this result at hand, we can now formulate a precise version of our conjecture and possible obstructions to proving it.

\begin{conj} \label{conj:main}
 Let $\V$ be a presentably symmetric monoidal $\infty$-category. Let $\Mod_\V$ be the $(\infty,2)$-category with as objects $\V$-enriched categories and morphisms given by bimodules (cf. \cite{haugseng2016bimodules}).

 The following diagram of $\infty$-categories
 \[
    \begin{tikzcd}
    & \V \arrow[d, "\Ho"] \\
        \biHH_\infty(\Mod_V) \arrow[ur, dashed] \arrow[r, "\widehat{\HH}_\V"'] & \Ho(\V)
    \end{tikzcd}
 \]
 admits a lift. Moreover, if $\V$ is closed symmetric monoidal, then the lift is a $\V$-enriched functor of $\V$-enriched $\infty$-categories.
\end{conj}

\begin{rmk} \label{rmk:challenges}
To establish \cref{conj:main}, we need to prove that if $\V$ is closed monoidal (i.e., if $\V =\Sp$), then the functor $\HH_\V\colon \biHHinfty(\Mod_\V) \to \V$, the existence of which we also must establish, is a $\V$-enriched functor of $\V$-enriched $\infty$-categories. While it is clear that  $\V$ is $\V$-enriched \cite[Corollary 7.4.10]{gepnerhaugseng2015enriched},  proving that $\biHHinfty(\Mod_\V)$ is $\V$-enriched requires showing that $\Mod_\V$ is a $\V$-enriched $(\infty,2)$-category i.e., enriched in $\V$-enriched $\infty$-categories rather than just $\infty$-categories. 
 
If $\V$ is presentable, then this is expected to be true and should be established in future work\footnote{Based on private conversation with Rune Haugseng.}. The existing literature, such as \cite{haugseng2016bimodules}, does not include such results, however, and makes the further study of $\HH_\V$ more challenging.
\end{rmk}

\appendix

\section{Truncated Simplicial Pseudo-Diagrams} \label{sec:pseudo diag}
In this appendix we perform a detailed analysis of pseudofunctors out of the truncated simplex categories $(\DD_{\leq 2})^{op}$ and $((\DD_{\leq 2})^{op})^{\rhd}$, which is essential for our proof of \cref{thm:main theorem}. The content of this appendix is a manifestation of categorical obstruction theory, in that we prove in \cref{thm:trunc simpl lift} that the existence of pseudo-functorial lifts for the inclusion $(\DD_{\leq 2})^{op} \to ((\DD_{\leq 2})^{op})^{\rhd}$ reduces to several manageable obstructions.

A $2$-category is by definition a category enriched over categories. Hence, there is a default notion of morphisms between $2$-categories, called \emph{$2$-functors}, which are strict functors that respect the categorical enrichment. On the other hand, we can also consider {\it pseudofunctors}, for which the composition and unit hold only up to chosen natural isomorphisms. For a more detailed discussion of pseudofunctors see \cite{grothendieck1971pseudo,benabou1967bicat}. 

We want to characterize pseudofunctors out of $(\DD_{\leq 2})^{op}$ and out of $((\DD_{\leq 2})^{op})^{\rhd}$ into a $2$-category $\B$. Constructing a pseudofunctor by hand can be quite challenging, however,  since we need to specify a natural isomorphism for every pair of composable morphisms in the domain. Fortunately, there is a way to simplify the task. In \cite{lack2002quillentwocat, lack2004quillenbicat} Lack constructs a model structure on the category of $2$-categories that has the property that any pseudofunctor between $2$-categories  with cofibrant domain is equivalent to a $2$-functor \cite[Remark 4.10]{lack2002quillentwocat}. 

We start therefore by recalling the characterization of the cofibrant objects in Lack's model structure, which requires the notion of  \emph{free categories}. Recall that a directed graph is specified by a pair of functions with the same domain and the same codomain. Every small category has an \emph{underlying directed graph}, given by the source and target functions on the set of morphisms, i.e., there is a forgetful functor $U\colon\cat \to \grph$, the category of directed graphs. The forgetful functor admits a left adjoint $\F_1\colon\grph \to \cat$, which takes a directed graph to its \emph{free category}. For more details regarding free categories and directed graphs see \cite[Section II.7]{maclane1998categories}. The following result regarding cofibrant objects summarizes \cite[Theorem 4.8]{lack2002quillentwocat}.

\begin{thm}\label{thm:lack cofibrant}
 A $2$-category  is cofibrant in Lack's model structure if and only its underlying $1$-category is a free category on a directed graph.
\end{thm}

Unfortunately, the categories $(\DD_{\leq 2})^{op}$ and $((\DD_{\leq 2})^{op})^{\rhd}$ are not cofibrant in Lack's model structure on $2$-categories. Indeed the generating morphisms satisfy non-trivial relations (cf. \cref{lemma:trunc simplex gen}). Hence our first task is to find appropriate cofibrant replacements, for which we apply the theory of {\it computads} introduced in \cite{street1976twolimits}. A computad consists of a directed graph $\G$ together with a set of $2$-arrows between parallel morphisms in the free category on $\G$. We refer to the original source \cite[Section 2]{street1976twolimits} for a more complete description, as we need only certain computads.

Before we proceed, it is helpful to describe fully the category $(\DD_{\leq 2})^{op}$, in particular its morphisms. As it is a full subcategory of $\DD^{op}$, we can state the following result in terms of the characterization of $\DD$ via generators and relations found in \cite[Section I.1]{goerssjardine2009simplicial} and \cite[Section VII.5]{maclane1998categories}
\begin{lemma} \label{lemma:trunc simplex gen}
 The category $(\DD_{\leq 2})^{op}$ can be described as follows.
  \begin{itemize}
     \item It has three objects: $0,1,2$.
     \item It has eight generating morphisms.
     \begin{itemize}
         \item $s_0\colon 0 \to 1$
         \item $d_0,d_1\colon 1 \to 0$
         \item $s_0,s_1\colon 1 \to 2$
         \item $d_0,d_1,d_2\colon 2 \to 1$
     \end{itemize}
     \item The generating morphisms satisfy the following relations.
     \begin{itemize}
        \item $d_0d_0= d_0d_1,d_0d_2=d_1d_0, d_1d_1 =d_1d_2$
         \item $d_1s_0=\mathrm{id} =d_0s_0$
         \item $d_0s_0=d_1s_1 =\mathrm{id}= d_1s_0=d_2s_1$ 
         \item $s_0s_0=s_1s_0$
     \end{itemize}
 \end{itemize}
\end{lemma}

Motivated by this description, we now construct the desired computad, starting with its underlying directed graph.

\begin{defn} \label{def:cdtwo}
 Let $\cD_2$ be the directed graph with three objects $\{0,1,2\}$ and eight arrows $\{s_0\colon0 \to 1; d_0,d_1\colon 1 \to 0; s_0,s_1\colon 1 \to 2; d_0,d_1,d_2\colon 2 \to 1\}$. 
\end{defn}

Next we add the relevant $2$-arrows based on the relations in \cref{lemma:trunc simplex gen}.

\begin{defn} \label{def:cg computad}
 Let $\cG_2$ be the computad with underlying directed graph $\cD_2$, equipped with following  $2$-arrows in $\F_1(\cD_2)$.
 \begin{itemize}
     \item $d_0d_0\Rightarrow d_0d_1,d_0d_2\Rightarrow d_1d_0, d_1d_1 \Rightarrow d_1d_2$
         \item $d_1s_0\Rightarrow\mathrm{id} \Leftarrow d_0s_0$
         \item $d_0s_0\Rightarrow d_1s_1 \Rightarrow\mathrm{id}\Leftarrow d_1s_0 \Leftarrow d_2s_1$ 
         \item $s_0s_0\Rightarrow s_1s_0$
 \end{itemize}
\end{defn}

Applying \cite[Theorem 2]{street1976twolimits} to the computad $\cG_2$ gives us following result.

\begin{lemma} \label{lemma:cg computad inverse}
 There is a $2$-category $\F_2(\cG_2)$ satisfying the universal property that every $2$-functor $\F_2(\cG_2) \to \B$ is specified by the following data.
 \begin{itemize}
     \item A functor from the free category $\F_1(\cD_2)$ on $\cD_2$ to the underlying category of $\B$.
     \item A choice of $2$-morphism in $\B$ with the appropriate source and target for every $2$-arrow of the computad $\cG_2$.
 \end{itemize}
\end{lemma}

Intuitively, every $2$-cell in $\F_2(\cG_2)$ has as source and target a collection of composable $1$-cells, and the ``appropriate'' source and target of the image of the $2$-cell in $\B$ is given by the composition of the images of these $1$-cells (which does exist in $\B$, as it is a $2$-category). We refer to \cite[Page 155]{street1976twolimits} for a more detailed explanation of what appropriate means in this context.

Applying the universal property of the free category functor, let 
 $$\pi_1\colon\F_1(\cD_2) \to (\DD_{\leq 2})^{op}$$ 
 denote the functor that  is the identity on objects and generating $1$-morphisms.  Note that by the universal property of $\F_2(\cG_2)$, there is a $2$-functor
 $$\pi_2\colon\F_2(\cG_2) \to (\DD_{\leq 2})^{op}$$
with underlying functor $\pi_1$ and sending every 2-arrow of the computad $\cD_2$ to an identity. 

 Before proceeding further, we analyze the structure of the $2$-category $\F_2(\cG_2)$ and the relations satisfied by its $2$-morphisms. 
 
 \begin{lemma} \label{lemma:cofib rep}
 The $2$-functor $\pi_2\colon\F_2(\cG_2) \to (\DD_{\leq 2})^{op}$ is a cofibrant replacement in Lack's model structure on bicategories. 
\end{lemma} 

\begin{proof}
 By \cref{thm:lack cofibrant}, the $2$-category $\F_2(\cG_2)$ is evidently cofibrant as its underlying category is just $\F_1(\cD_2)$, which is free by definition.
 
 We therefore need only to show that the projection map $$\pi_2\colon\F_2(\cD_2) \to (\DD_{\leq 2})^{op}$$
 is a weak equivalence in Lack's model structure. However, as follows from the proof  of \cite[Proposition 4.2]{lack2002quillentwocat} and the explanation immediately thereafter, this functor is actually a trivial fibration. 
\end{proof} 

We apply this result to establish a useful characterization of  the $2$-category $\F_2(\cG_2)$. To do so, it is helpful to review {\it contractible groupoids}.

\begin{lemma} \label{lemma:contractible groupoid}
 The forgetful functor $\Grpd \to \set$, which takes a groupoid to its set of objects, admits a right adjoint $I\colon \set \to \Grpd$, which takes a set $S$ to the groupoid $I(S)$ with the same set of objects and a unique isomorphism between any two objects. Moreover, a category with object set $S$ is contractible if and only if it is isomorphic to $I(S)$.
\end{lemma}

\begin{proof}
 Only the last sentence requires an argument. If $\C$ is a contractible category with object set $S$, then for any object $s \in S$, the functor $s\colon[0] \to \C$ is an equivalence and thus fully faithful, which implies that $\C(s,s)= \{\mathrm{id}_s\}$. 
 
Let $s,s' \in S$. Since the map $s\colon [0] \to \C$ is an equivalence, it is essentially surjective, whence there exists an isomorphism $s \to s'$ in $\C$, which must be unique. Indeed, the existence of two distinct isomorphisms $f,g\colon s \to s'$ would imply the existence of a non-trivial automorphism $g^{-1}f\colon s \to s$, in contradiction with the conclusion of the previous paragraph.  We conclude that $\C$ is  isomorphic to $I(S)$, as desired.
\end{proof}

 We call $I(S)$ the \emph{contractible groupoid based on the set $S$}, as it is equivalent to the terminal category. We can formulate an alternative characterization of the morphism categories in $\F_2(\cG_2)$ in terms of contractible groupoids.
 
 \begin{lemma}\label{lemma:alt description ftwogtwo}
  The $2$-category $\F_2(\cG_2)$ has three objects $0,1,2$. Moreover, for any  $i,j\in \{0,1,2\}$, the $1$-morphisms from $i$ to $j$ (i.e., the objects in $\F_2(\cG_2)(i,j)$) are the elements of the set $\F_1(\cD_2)(i,j)$, while for two given $1$-morphisms  $f,g\colon i \to j$, there is a {\it unique} $2$-morphism from $f$ to $g$ if and only if $\pi_1(f) = \pi_1(g)$ in $(\DD_{\leq 2})^{op}$.  In particular, $\F_2(\cG_2)(i,j)$ is a groupoid, and there is an isomorphism of groupoids
    $$\F_2(\cG_2)(i,j) \cong \coprod_{f \in (\DD_{\leq 2})^{op}(i,j)} I\big((\pi_1)^{-1}(f)\big).$$
\end{lemma}

\begin{proof}
 The characterization of the underlying category of the $2$-category $\F_2(\cG_2)$ follows from \cref{lemma:cg computad inverse}. For all $i,j \in \{0,1,2\}$,  \cref{lemma:cofib rep} implies that
 $$\F_2(\cG_2)(i,j) \to (\DD_{\leq 2})^{op}(i,j)$$ 
 is an equivalence of categories. Since the right hand side is discrete, $\F_2(\cG_2)(i,j)$ is a disjoint union of contractible categories. The desired isomorphism now follows from the characterization of contractible categories given in \cref{lemma:contractible groupoid}.
\end{proof}

We next use this cofibrant replacement to characterize pseudofunctors out of $(\DD_{\leq 2})^{op}$. To simplify notation, we henceforth denote the cofibrant replacement above by 
$$\pi_Q\colon Q(\DD_{\leq 2})^{op} \to (\DD_{\leq 2})^{op}.$$

The next result is an immediate consequence of \cref{lemma:alt description ftwogtwo} and \cite[Remark 4.10]{lack2002quillentwocat}.

\begin{cor} \label{cor:pseudo vs two}
Let $\B$ be a $2$-category.
 Precomposing with the cofibrant replacement functor $\pi_Q\colon Q(\DD_{\leq 2})^{op} \to (\DD_{\leq 2})^{op}$ induces an equivalence between the category of pseudofunctors $(\DD_{\leq 2})^{op} \to \B$ and that of $2$-functors $Q(\DD_{\leq 2})^{op} \to \B$. In other words, for every pseudofunctor $F\colon (\DD_{\leq 2})^{op} \to \B$, there exists a $2$-functor $\widehat{F}\colon Q(\DD_{\leq 2})^{op} \to \B$ and an equivalence $F \circ \pi_Q \simeq \widehat{F}$.
\end{cor} 

Combining the results above, we can now characterize pseudofunctors out of $(\DD_{\leq 2})^{op}$.

\begin{lemma} \label{lemma:trunc simp diag}
For any $2$-category $\B$, a pseudofunctor $(\DD_{\leq 2})^{op} \to \B$ is determined up to equivalence by a choice of the following data.
 \begin{itemize}
     \item Three objects $B_0,B_1,B_2$
     \item Eight $1$-morphisms 
     \begin{itemize}
         \item $s_0\colon B_0 \to B_1$
         \item $d_0,d_1\colon B_1 \to B_0$
         \item $s_0,s_1\colon B_1 \to B_2$
         \item $d_0,d_1,d_2\colon B_2 \to B_1$
     \end{itemize}
     \item Ten invertible $2$-morphisms
     \begin{itemize}
         \item $d_0d_0 \overset{\alpha_0}{\Rightarrow} d_0d_1,d_0d_2\overset{\alpha_1}{\Rightarrow} d_1d_0, d_1d_1 \overset{\alpha_2}{\Rightarrow} d_1d_2$ in $\B(B_2,B_0)$
         \item $d_1s_0 \overset{\beta_0}{\Rightarrow} \mathrm{id} \overset{\beta_1}{\Leftarrow} d_0s_0$ in $\B(B_0,B_0)$
         \item $d_0s_0 \overset{\gamma_0}{\Rightarrow} d_1s_1 \overset{\gamma_1}{\Rightarrow} \mathrm{id} \overset{\gamma_2}{\Leftarrow} d_1s_0 \overset{\gamma_3}{\Leftarrow} d_2s_1$ in $\B(B_1,B_1)$
         \item $s_0s_0 \overset{\delta_0}{\Rightarrow} s_1s_0$ in $\B(B_0,B_2)$
     \end{itemize}
    
 \end{itemize}
Moreover, any two $2$-morphisms generated by the $2$-morphisms above that have the same domain and same codomain are equal.
\end{lemma}

\begin{proof}
 By \cref{cor:pseudo vs two} a pseudofunctor $(\DD_{\leq 2})^{op} \to \B$ is determined up to equivalence by a $2$-functor $Q(\DD_{\leq 2})^{op} \to \B$. The universal property of $Q(\DD_{\leq 2})^{op}$ as formulated in \cref{lemma:cg computad inverse} implies that a $2$-functor $Q(\DD_{\leq 2})^{op} \to \B$ is specified by a choice of $0,1,2$-morphisms and relations as above.
 \end{proof}
 
Now that we have a useful description of pseudofunctors out of $(\DD_{\leq 2})^{op}$, the next step is to study pseudofunctorial lifts to $((\DD_{\leq 2})^{op})^\rhd$. Recall that the category $((\DD_{\leq 2})^{op})^\rhd$ is  the join of $(\DD_{\leq 2})^{op}$ and a final object (\cref{def:cocone}). To characterize pseudofunctors out of $((\DD_{\leq 2})^{op})^{\rhd}$  directly is challenging, so we prefer instead to  study $2$-functors out of its cofibrant replacement, which requires proving results analogous to \cref{lemma:cg computad inverse} and \cref{lemma:trunc simp diag}. First we describe $((\DD_{\leq 2})^{op})^\rhd$ in terms of generators and relations, analogously to \cref{lemma:trunc simplex gen}.

\begin{lemma} \label{def:deltaleqtwofinal}
 Let $((\DD_{\leq 2})^{op})^\rhd$ be the category constructed by joining a terminal object to $(\DD_{\leq 2})^{op}$. More precisely,  $((\DD_{\leq 2})^{op})^\rhd$ is specified by following generators and relations.
 \begin{itemize}
     \item Objects $0,1,2,f$
     \item Morphisms
      \begin{itemize}
          \item The generating morphisms in $(\DD_{\leq 2})^{op}$: $s_0\colon 0 \to 1$, $d_0,d_1\colon 1 \to 0$, $s_0,s_1\colon 1 \to 2$, $d_0,d_1,d_2\colon 2 \to 1$, 
          \item Three additional morphisms: $t_0\colon 0 \to f$,  $t_1\colon 1 \to f$, $t_2\colon 2 \to f$.
      \end{itemize} 
    \item Relations  
    \begin{itemize}
        \item The relations that already hold in $(\DD_{\leq 2})^{op}$: 
        \begin{itemize} 
        \item 
        $d_0d_0=d_0d_1$, $d_0d_2= d_1d_0$, $d_1d_1= d_1d_2\colon 2 \to 0$,
        \item $d_1s_0 = \mathrm{id} = d_0s_0\colon0 \to 0$,
        \item $d_0s_0 =d_1s_1= \mathrm{id}=d_1s_0=d_2s_1\colon1 \to 1$,
        \item $s_0s_0=s_1s_0\colon0 \to 2$,
        \end{itemize}
        \item The additional relations for the terminal object:
        \begin{itemize}
        \item $t_0=t_1s_0=t_2s_0s_0=t_2s_1s_0\colon0 \to f$,
        \item $t_1=t_0d_0=t_0d_1=t_2s_0=t_2s_1\colon 1 \to f$,
        \item $t_2=t_1d_0=t_1d_1=t_1d_2=t_0d_0d_0=t_0d_0d_1=t_0d_0d_2= t_0d_1d_0=t_0d_1d_2= t_0d_1d_1\colon2 \to f$.
       \end{itemize}
    \end{itemize}
 \end{itemize}
\end{lemma}

There is an obvious fully faithful inclusion $(\DD_{\leq 2})^{op} \to ((\DD_{\leq 2})^{op})^{\rhd}$. The list of relations for $((\DD_{\leq 2})^{op})^{\rhd}$ given above contains many redundancies, which we can reduce as follows.
 
 \begin{lemma} \label{lemma:extra relations}
  The category $((\DD_{\leq 2})^{op})^{\rhd}$ can be specified by the following data.
  \begin{itemize}
      \item The same objects as above.
      \item The generating morphisms in $(\DD_{\leq 2})^{op}$ together with one additional morphism $t_0\colon0 \to f$
      \item The relations that hold in $(\DD_{\leq 2})^{op}$ together with one additional relation $t_0d_0=t_0d_1\colon 1 \to f$.
  \end{itemize}
 \end{lemma} 
 
 \begin{proof}
 We define $t_1=t_0d_0$ and $t_2=t_0d_0d_0$.
 Since the relations listed in this lemma are a subset of the relations  in \cref{def:deltaleqtwofinal}, the necessity is evident.  We prove that these relations suffice by recovering the remaining relations from them, as follows. 
   \begin{enumerate}
     \item $t_2s_0s_0 \overset{s_0s_0=s_1s_0}{=}t_2s_1s_0$.
     \item $t_2\overset{def}{=}t_0d_0d_0\overset{t_1=t_0d_0}=t_1d_0$
     \item $t_2\overset{def}{=} t_0d_0d_0\overset{d_0d_0=d_0d_1}{=}t_0d_0d_1\overset{t_0d_0=t_1}{=}t_1d_1$
     \item $t_2\overset{def}{=}t_0d_0d_0\overset{t_1=t_0d_0}=t_1d_0\overset{t_1=t_0d_1}{=}t_0d_1d_0\overset{d_1d_0=d_0d_2}{=}t_0d_0d_2\overset{t_0d_0=t_1}{=}t_1d_2$
     \item $t_2 \overset{def}{=} t_0d_0d_0\overset{t_0d_0=t_0d_1}{=} t_0d_1d_0\overset{d_1d_0=d_0d_2}{=}t_0d_0d_2\overset{t_0d_0=t_0d_1}{=}t_0d_1d_2\overset{d_1d_2=d_1d_1}{=}t_0d_1d_1$
     \item 
     $t_2s_0\overset{t_2=t_0d_0d_0}{=}t_0d_0d_0s_0\overset{t_1=t_0d_0}{=}t_1d_0s_0 \overset{d_0s_0=\mathrm{id}} = t_1$
     \item $t_2s_1\overset{t_2=t_0d_0d_0}{=} t_0d_0d_0s_1\overset{d_0d_0=d_0d_1}{=}t_0d_0d_1s_1\overset{t_0d_0=t_1}{=}t_1d_1s_1\overset{d_1s_1=\mathrm{id}}{=}t_1$
     \item  $t_2s_0s_0\overset{t_2=t_0d_0d_0}{=}t_0d_0d_0s_0s_0\overset{\mathrm{id} = d_0s_0}{=}t_0d_0s_0  \overset{\mathrm{id} = d_0s_0}{=}t_0$
     \item $t_1s_0\overset{t_1=t_0d_0}{=}t_0d_0s_0 \overset{d_0s_0=\mathrm{id}} = t_0$
 \end{enumerate}
 As we have recovered all the equalities in \cref{def:deltaleqtwofinal}, the result follows.
 \end{proof}

Given this lemma, we can easily modify  the proofs of \cref{lemma:cg computad inverse} and \cref{lemma:trunc simp diag} to establish the following analogous result.

\begin{defn} \label{def:dtwo plus}
 Let $(\cD_2)^{\rhd}$ denote the directed graph with
 \begin{itemize}
     \item vertices $0,1,2,f$, and
     \item the same edges as in $\cD_2$ (\cref{def:cdtwo}), together with one additional edge $0 \to f$.
 \end{itemize}
\end{defn}

\begin{lemma} \label{lemma:cofib rep final}
  The $2$-category $\big((\DD_{\leq 2})^{op}\big)^{\rhd}$ admits a cofibrant replacement given by the $2$-category $Q\Big(\big((\DD_{\leq 2})^{op}\big)^{\rhd}\Big)$, which is the $2$-category determined by the computad with
  \begin{itemize}
      \item underlying graph is $(\cD_2)^\rhd$, and 
      \item the same $2$-arrows as  $\cG_2$ (\cref{def:cg computad}), along with one additional $2$-arrow $\theta\colon t_0d_0 \Rightarrow t_0d_1$.
  \end{itemize}
\end{lemma}

The morphism categories of $Q\Big(\big((\DD_{\leq 2})^{op}\big)^{\rhd}\Big)$ can be characterized as in \cref{lemma:alt description ftwogtwo}, leading in particular to the following observation.

\begin{cor}\label{cor:contractible-join}
 The category $Q\Big(\big((\DD_{\leq 2})^{op}\big)^{\rhd}\Big)(i,f)$ is a contractible groupoid for all  $i\in \{0,1,2\}$.
\end{cor}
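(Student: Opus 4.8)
The plan is to transfer the argument of \cref{lemma:alt description ftwogtwo} to the join. First I would invoke \cref{lemma:cofib rep final}: the $2$-functor
$$
\pi_Q\colon Q\Big(\big((\DD_{\leq 2})^{op}\big)^{\rhd}\Big) \longrightarrow \big((\DD_{\leq 2})^{op}\big)^{\rhd}
$$
is a cofibrant replacement in Lack's model structure, and, exactly as in the proof of \cref{lemma:cofib rep} (via \cite[Proposition 4.2]{lack2002quillentwocat}), it is in fact a trivial fibration. Such a $2$-functor is locally a trivial fibration of categories, hence locally an equivalence, so for every $i\in\{0,1,2\}$ the induced functor
$$
Q\Big(\big((\DD_{\leq 2})^{op}\big)^{\rhd}\Big)(i,f) \xrightarrow{\ \simeq\ } \big((\DD_{\leq 2})^{op}\big)^{\rhd}(i,f)
$$
is an equivalence of categories.

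Next I would identify the target of this equivalence. The category $\big((\DD_{\leq 2})^{op}\big)^{\rhd}$ is an ordinary $1$-category in which, by construction (\cref{def:deltaleqtwofinal}), $f$ is a terminal object; hence for each $i$ the hom-category $\big((\DD_{\leq 2})^{op}\big)^{\rhd}(i,f)$ is the discrete category on a single object, i.e., the terminal category. Combining this with the previous paragraph, $Q\Big(\big((\DD_{\leq 2})^{op}\big)^{\rhd}\Big)(i,f)$ is equivalent to the terminal category, so by definition it is contractible. Equivalently, one may read this off directly from the analog of \cref{lemma:alt description ftwogtwo}, which exhibits this hom-category as $\coprod_{g} I\big((\pi_1)^{-1}(g)\big)$ indexed over the singleton $\big((\DD_{\leq 2})^{op}\big)^{\rhd}(i,f)$.

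Finally, I would appeal to \cref{lemma:contractible groupoid}: a contractible category on object set $S$ is isomorphic to the contractible groupoid $I(S)$, and is in particular a groupoid. Therefore $Q\Big(\big((\DD_{\leq 2})^{op}\big)^{\rhd}\Big)(i,f)$ is a contractible groupoid, as claimed. I do not expect a genuine obstacle here; the only point requiring care is that the cofibrant-replacement $2$-functor really does induce equivalences on hom-categories, which is already established for $\cG_2$ in \cref{lemma:cofib rep}/\cref{lemma:alt description ftwogtwo} and transports verbatim, since the computad of \cref{lemma:cofib rep final} differs from $\cG_2$ only by the extra edge $t_0\colon 0\to f$ and the extra $2$-arrow $\theta\colon t_0 d_0 \Rightarrow t_0 d_1$.
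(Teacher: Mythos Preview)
Your proof is correct and follows essentially the same approach as the paper: both argue by transporting the argument of \cref{lemma:alt description ftwogtwo} to the join, using that the cofibrant replacement induces local equivalences on hom-categories and that $f$ is terminal in $\big((\DD_{\leq 2})^{op}\big)^{\rhd}$, so the target hom-category is a singleton. You have simply made explicit the steps that the paper compresses into the phrase ``essentially the same as that of \cref{lemma:alt description ftwogtwo}.''
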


\begin{proof}The proof is essentially the same as that of \cref{lemma:alt description ftwogtwo}, given that there are unique maps $i \to f$ in $Q\Big(\big((\DD_{\leq 2})^{op}\big)^{\rhd}\Big)$.
\end{proof}

\begin{rmk}\label{rem:contractible-maps}
\cref{cor:contractible-join} enables us to better understand $Q\Big(\big((\DD_{\leq 2})^{op}\big)^{\rhd}\Big)$. There are four objects $0,1,2,f$, and morphisms are words with letters 
 $$s_0,d_0,d_1,s_0,s_1,d_0,d_1,d_2,t_0.$$
 In particular, the only generating morphism that has codomain $f$ is $t_0\colon0 \to f$, whence every object in the groupoid $Q\Big(\big((\DD_{\leq 2})^{op}\big)^{\rhd}\Big)(i,f)$ is necessarily of the form $t_0 g$ for some morphism $g\colon i \to 0$, where $i \in \{ 0,1,2\}$. Contractibility of $Q\Big(\big((\DD_{\leq 2})^{op}\big)^{\rhd}\Big)(i,f)$ implies that there is precisely one isomorphism between $t_0g_0$ and $t_0g_1$ for any $g_0,g_1\colon i \to 0$. 
\end{rmk} 

The equivalence below of $2$-functors and pseudofunctors  follows from \cref{lemma:cofib rep final} and \cite[Remark 4.10]{lack2002quillentwocat} .

\begin{cor}\label{cor:ps-2}
Let $\B$ be a $2$-category.
 Precomposing with the cofibrant replacement functor $$\pi_Q^\rhd\colon Q\Big(\big((\DD_{\leq 2})^{op}\big)^{\rhd}\Big) \to \big((\DD_{\leq 2})^{op}\big)^{\rhd}$$ induces an equivalence between the category of pseudofunctors $\big((\DD_{\leq 2})^{op}\big)^{\rhd} \to \B$ and that of $2$-functors $Q\Big(\big((\DD_{\leq 2})^{op}\big)^{\rhd}\Big) \to \B$. In other words, for every pseudofunctor $F\colon \big((\DD_{\leq 2})^{op}\big)^{\rhd} \to \B$, there exists a $2$-functor $\widehat{F}\colon Q\Big(\big((\DD_{\leq 2})^{op}\big)^{\rhd}\Big) \to \B$ and an equivalence $F \circ \pi_Q^\rhd \simeq \widehat{F}$.
\end{cor}

We can use this result to characterize pseudofunctors out of $((\DD_{\leq 2})^{op})^{\rhd}$, using the inclusion functor $Q(\DD_{\leq 2})^{op} \to Q\Big(\big((\DD_{\leq 2})^{op}\big)^{\rhd}\Big)$ arising from the inclusion of computads.  In the statement and the proof below, we use the notation of \cref{lemma:trunc simp diag}.

\begin{thm} \label{thm:trunc simpl lift}
 Let $\B$ be a $2$-category, and let $F\colon(\DD_{\leq 2})^{op} \to \B$ be a pseudofunctor. An extension of $F$ to $\big((\DD_{\leq 2})^{op}\big)^{\rhd}$ 
 \begin{center}
     \begin{tikzcd}
      (\DD_{\leq 2})^{op} \arrow[d, hookrightarrow] \arrow[r, "F"] & \B \\
      ((\DD_{\leq 2})^{op})^{\rhd} \arrow[ur, "\widehat{F}"', dashed]
     \end{tikzcd}
 \end{center}
is specified by
 \begin{itemize}
     \item an object $B_f$ in $\B$,
     \item a $1$-morphism $t_0\colon B_0 \to B_f$, and
     \item an invertible $2$-morphism $t_0d_0 \overset{\theta}{\Rightarrow} t_0d_1$
\end{itemize}
 such that the following equalities hold, where $*$ denotes whiskering of 2-cells by 1-cells, and $\circ$ denotes vertical composition of 2-cells. 
     \begin{itemize}
         \item $\theta* s_0= t_0*((\beta_0)^{-1} \circ \beta_1)\colon t_0d_0s_0 \to t_0d_1s_0$
         \item $ (t_0 (\alpha_1)^{-1}) \circ (\theta * d_0) = (\theta^{-1} * d_2) \circ (t_0 * \alpha_2) \circ (\theta * d_1) \circ (t_0 * \alpha_0)\colon t_0d_0d_0 \to t_0d_0d_2$
  \end{itemize}
  Here the $s_i, d_j$ are $1$-morphisms in $\B$ and  $\alpha_i,\beta_j$ are $2$-morphisms in $\B$, characterizing the pseudofunctor $F$, as described in \cref{lemma:trunc simp diag}.
\end{thm}

\begin{proof}
 By \cref{cor:pseudo vs two} and \cref{cor:ps-2}, we can instead start with a $2$-functor $F\colon Q((\DD_{\leq 2})^{op}) \to \B$ and define an extension $\widehat{F}\colon Q((\DD_{\leq 2})^{op})^{\rhd} \to \B$. The nature of the data required to specify the image of $0,1$, and $2$-morphisms under $\widehat{F}$ follows from the characterization of $Q((\DD_{\leq 2})^{op})^{\rhd}$ in \cref{lemma:cofib rep final}.
 
 In order to complete the proof we make explicit the relations that guarantee that the groupoids $Q\Big(\big((\DD_{\leq 2})^{op}\big)^{\rhd}\Big)(i,f)$ are in fact contractible, i.e., that for any two morphisms $g_1,g_2\colon i \to f$, there is a {\it unique} natural isomorphism $g_1 \cong g_2$. Any other natural isomorphism generated by $\theta\colon t_0d_0\cong t_0d_1$ must therefore coincide with the original one. 
 
 We start by analyzing $Q\Big(\big((\DD_{\leq 2})^{op}\big)^{\rhd}\Big)(1,f)$. As explained in \cref{rem:contractible-maps}, every object in this category is of the form $t_0g'$ for some $g'\colon1 \to 0$. By construction, every object $g'$ in $Q\Big(\big((\DD_{\leq 2})^{op}\big)^{\rhd}\Big)(1,0)$ is {\it uniquely } isomorphic to either $d_0\colon1 \to 0$ or $d_1\colon1 \to 0$. For any two 1-cells $g_1,g_2\colon1 \to t$, with $g_1=t_0g_1'$, $g_2=t_0g_2'$, there are thus four possible scenarios.
 
 \begin{enumerate}
     \item There are unique isomorphisms $g_1' \overset{\kappa_1}{\cong} d_0$ and $g_2' \overset{\kappa_2}{\cong} d_0$, whence
     $$g_1= t_0g_1' \overset{t_0\kappa_1}{\cong} t_0d_0 \overset{t_0\kappa_2}{\cong} t_0g_2' = g_2$$
     is the unique isomorphism between $g_1$ and $g_2$.
     \item There are unique isomorphisms $g_1' \overset{\kappa_1}{\cong} d_1$ and $g_2' \overset{\kappa_2}{\cong} d_1$. This case is essentially identical to the previous one.
     \item There are unique isomorphisms $g_1' \overset{\kappa_1}{\cong} d_0$ and $g_2' \overset{\kappa_2}{\cong} d_1$. In this case the unique isomorphism between $g_1$ and $g_2$ is obtained as follows.
     $$g_1= t_0g_1' \overset{t_0\kappa_1}{\cong} t_0d_0 \overset{\theta}{\cong} t_0d_1 \overset{t_0\kappa_2}{\cong} t_0g_2' = g_2$$
     \item There are unique isomorphisms $g_1' \overset{\kappa_1}{\cong} d_1$ and $g_2' \overset{\kappa_2}{\cong} d_0$.  This case is the same as the previous one, up to permutation of the roles of $g_1$ and $g_2$.
 \end{enumerate}
 
Consider now the case of $Q\Big(\big((\DD_{\leq 2})^{op}\big)^{\rhd}\Big)(0,f)$. 
 There are three types of objects in this category.
 \begin{enumerate}
     \item $t_0g$, where $g\colon0 \to 0$ is uniquely isomorphic to the identity in $Q\Big(\big((\DD_{\leq 2})^{op}\big)^{\rhd}\Big)(0,0)$.
     \item $t_0d_0g$, where $g\colon0 \to 1$ is uniquely isomorphic to $s_0$ in $Q\Big(\big((\DD_{\leq 2})^{op}\big)^{\rhd}\Big)(0,1)$.
     \item $t_0d_1g$, where $g\colon0 \to 1$ is uniquely isomorphic to $s_0$ in $Q\Big(\big((\DD_{\leq 2})^{op}\big)^{\rhd}\Big)(0,1)$.
 \end{enumerate}
As in the previous case, there is a unique isomorphism $t_0d_0g \cong t_0d_1g$. We need therefore only  to compare a 1-cell of the form $t_0g_1$, where $g_1 \cong \mathrm{id}_0$, and a 1-cell of the form $t_0d_0g_2$, where $g_2 \cong s_0$, and to find the necessary and sufficient conditions to guarantee that there is a unique isomorphism between $t_0g_1$ and $t_0d_0g_2$. 
 
 By assumption, $t_0g_1 \cong t_0$ and $t_0d_0g_2 \cong t_0d_0s_0$, via unique isomorphisms. There are two potential isomorphisms between $t_0$ and $t_0d_0s_0$: 
 \begin{enumerate}
     \item $t_0d_0s_0 \overset{t_0*\beta_1}{\Rightarrow} t_0$, and
     \item $t_0d_0s_0 \overset{\theta* s_0}{\Rightarrow}t_0d_1s_0 \overset{t_0*\beta_0}{\Rightarrow} t_0$.
 \end{enumerate}
 Hence, there is a unique isomorphism $t_0g_1 \cong t_0d_0g_2$ if and only if $\theta* s_0 = t_0*((\beta_0)^{-1} \circ \beta_1)$.
 
The final case is that of $Q\Big(\big((\DD_{\leq 2})^{op}\big)^{\rhd}\Big)(2,f)$. If $g\colon2 \to f$ is any 1-cell, then $g = t_0g'$, for some $g'\colon2 \to 0$, which is necessarily of one of the following three types.
\begin{enumerate}
    \item $g' \cong d_0d_0$ via unique isomorphism
    \item $g' \cong d_0d_2$ via unique isomorphism
    \item $g' \cong d_1d_2$ via unique isomorphism
\end{enumerate}
It suffices therefore to find the necessary and sufficient conditions such that for a choice of $g_1',g_2'$ among the three types above, there is a unique isomorphism $t_0g_1' \cong t_0g_2'$.  There are three types of pairs of 1-cells to consider.

\begin{enumerate}
    \item In the case where $g_1' \cong g_2'\cong d_0d_0$, observe that the diagram of unique isomorphisms between 1-cells
    \begin{center}
     \begin{tikzcd}
      t_0d_0d_0 \arrow[r, "\theta* d_0", "\cong"'] \arrow[d, "t_0*\alpha_0"', "\cong"] &t_0d_1d_0 \arrow[r, "t_0*(\alpha_1)^{-1}", "\cong"'] & t_0d_0d_2  \\ t_0d_0d_1  \arrow[r, "\theta* d_1"', "\cong"]
      & t_0d_1d_1  \arrow[r, "t_0*\alpha_2"', "\cong"]
      & t_0d_1d_2,  \arrow[u, "\theta^{-1}* d_2"', "\cong"]
     \end{tikzcd}.
    \end{center}  
   commutes by uniqueness of isomorphisms between 1-cells. It follows that there is a unique isomorphism $t_0g_1' \cong t_0g_2'$ if and only if 
   $$(t_0 (\alpha_1)^{-1}) \circ (\theta * d_0) = (\theta^{-1} * d_2) \circ (t_0 * \alpha_2) \circ (\theta * d_1) \circ (t_0 * \alpha_0)
   $$
       
    \item When $g_1' \cong d_0d_0, g_2'\cong d_0d_2$, the same commuting diagram as in the previous case implies that the same necessary and sufficient conditions apply in this case. 
    
    \item When  $g_1'=d_0d_2$, $g_2'=d_1d_2$, the same commuting diagram again leads to the same necessary and sufficient conditions. 
\end{enumerate}

It follows that $Q\Big(\big((\DD_{\leq 2})^{op}\big)^{\rhd}\Big)(i,f)$ is contractible for $i = 0,1,2$ if and only if
\begin{align*}
\theta* s_0&= t_0*((\beta_0)^{-1} \circ \beta_1)\\
(t_0 (\alpha_1)^{-1}) \circ (\theta * d_0)&= (\theta^{-1} * d_2) \circ (t_0 * \alpha_2) \circ (\theta * d_1) \circ (t_0 * \alpha_0),
    \end{align*}
which finishes the proof.
 \end{proof}
 
\begin{rmk}
The results of this section (and in particular \cref{thm:trunc simpl lift}) focused on the case where $\B$ is a $2$-category, as we  apply these results only to the $2$-category of categories (in \cref{thm:main theorem}). However, following \cite[Lemma 9]{lack2004quillenbicat},  an
 analogous result should certainly hold for bicategories and pseudofunctors of bicategories (called {\it homomorphisms of bicategories} in \cite{lack2004quillenbicat}).
\end{rmk}

\section{Shadows vs. Traces: The Proof} \label{sec:the main result proof}
This section is dedicated to the proof of \cref{thm:main theorem}. We commence with a breakdown into several lemmas.	

\begin{proof}[Proof of \cref{thm:main theorem}]
	By \cref{lemma:cocones are traces} the functor 
	$$\Comp\colon \Fun\big(\biHH(\B),\D\big) \to \Cocone(\B,\D)$$
	is a natural equivalence of categories. In order to prove the theorem, we define two functors natural in $\B$ and $\D$ that we think of as performing ``strictification" and ``unstrictification" processes (\cref{rmk:strictification}),
	$$\St\colon \Cocone(\B,\D) \to \Sha(\B,\D)$$
	in \cref{prop:st functor} and
	$$\Un\colon \Sha(\B,\D) \to \Cocone(\B,\D),$$
	in \cref{prop:un functor} and then prove that they are mutually inverse in \cref{prop:st and un inverses}.
\end{proof}

\begin{lemma} \label{lemma:step two}
	Let $C = (C_0,C_1,C_2)$ be a cocone in $\Cocone(\B,\D)$. Then $C_0\colon \coprod_{X_0} \B(X_0,X_0) \to \D$ satisfies the trace-like property of a shadow.   
\end{lemma}

\begin{proof}
The natural isomorphism witnessing the trace-like property of $C_0$ is the composite of the natural isomorphisms witnessing the commutativity of the left hand triangles in \cref{eq:cocone}, 
	$$\theta_C\colon C_0d_0 \xrightarrow{\cong} C_1 \xrightarrow{\cong} C_0d_1.$$ 
Since this assignment is clearly natural in $\B$ and $\D$, what remains is to prove that  $(C_0,\theta_C)$ satisfies the axioms of a shadow, i.e., that the two  diagrams of \cref{defn:shadow} commute.  First we show that \cref{eq:shadow cond one} commutes. 

The fact that \cref{eq:cocone} is a cocone means that the image of the map 
\[
  {\ds C_0\colon \Fun^\Delta\big(\coprod_{X_0,X_1,X_2} \B(X_0,X_1,X_2,X_0), \coprod_{X_0} \B(X_0,X_0)\big) \to \Fun\big(\coprod_{X_0,X_1,X_2} \B(X_0,X_1,X_2,X_0),\D\big)}
\]
must be contractible (here we are using \cref{not:simplicial maps}). Concretely, this means the image under postcomposition with $C_0$ of any two functors $\coprod_{X_0,X_1,X_2} \B(X_0,X_1,X_2,X_0) \to \coprod_{X_0} \B(X_0,X_0)$ induced by the simplicial operators must be naturally isomorphic in a {\it unique} way (as a non-unique natural isomorphism would give us a non-trivial loop). 

\cref{eq:cocone} gives rise to the  cube below, in which each face commutes up to natural isomorphism.

{\small{     \strut
				\hspace{-1in}
					\begin{tikzcd}[row sep=0.3in, column sep=0.3in]
							&[-0.3in] & \ds\coprod_{X_0,X_1,X_2} \B(X_0,X_1,X_2,X_0) & & &[0.3in] &  \ds\coprod_{X_0,X_1} \B(X_0,X_1,X_0)\\
							\\[0.1in]
						\ds\coprod_{X_0,X_1} \B(X_0,X_1,X_0) & & & & \ds\coprod_{X_0}\B(X_0,X_0) & & \\
						\\[-0.2in]
						& & \ds\coprod_{X_0,X_1}\B(X_0,X_1,X_0) & & & & \ds\coprod_{X_0}\B(X_0,X_0) \\
						\\[0.2in]
						\ds\coprod_{X_0}\B(X_0,X_0) & & & & \D & & 
								\arrow[from=3-1, to=5-3, Rightarrow,"a" description, dashed]
								\arrow[from=5-3, to=1-7, Rightarrow,"a" description, dashed, bend left=10]
								\arrow[from=7-1, to=5-7, Rightarrow, "\theta" description, dashed]
								\arrow[from=3-5, to=5-7, Rightarrow, "\theta" description, dashed]
								\arrow[from=1-3, to=1-7, "d_2" description]
								\arrow[from=1-3, to=5-3, "d_1" description, crossing over]
								\arrow[from=1-3, to=3-1, "d_0"' description]
								\arrow[from=1-7, to=3-1, Rightarrow,"a"' description, dashed, crossing over]
								\arrow[from=1-7, to=5-7, "d_1" description]
								\arrow[from=1-7, to=3-5, "d_0" description]
								\arrow[from=3-1, to=3-5, "d_1" description, crossing over]
								\arrow[from=3-1, to=7-1, "d_0" description]
								\arrow[from=5-3, to=5-7, "d_1" description]
								\arrow[from=5-3, to=7-1, "d_0" description]
								\arrow[from=7-1, to=3-5, Rightarrow, "\theta" description, dashed, bend right=8, crossing over]
								\arrow[from=3-5, to=7-5, "C_0" description, crossing over]
								\arrow[from=5-7, to=7-5, "C_0" description]
								\arrow[from=7-1, to=7-5, "C_0" description]
					\end{tikzcd}}}

	There are exactly six paths in this cube  from the top left corner to the bottom right corner, 
	$$\ds \coprod_{X_0,X_1,X_2} \B(X_0,X_1,X_2,X_0) \to \D ,$$
corresponding to the six objects in \cref{eq:shadow cond one}. The natural isomorphisms on the faces of the cube correspond to the morphisms between the objects in \cref{eq:shadow cond one}. As explained above, the diagram of natural isomorphisms must commute since \cref{eq:cocone}  is a cocone.

It remains to prove that \cref{eq:shadow cond two} commutes. By symmetry it suffices to verify commutativity of the left-hand triangle, which we do by an argument similar to that above. Since  \cref{eq:cocone} is a cocone, the image of 
$$C_0\colon\Fun^\Delta\big(\coprod_{X_0}\B(X_0,X_0),\coprod_{X_0}\B(X_0,X_0)\big) \to \Fun\big(\coprod_{X_0}\B(X_0,X_0) ,\D\big)$$
must be contractible, which means that any two functors from  $\B(X_0,X_0)$ to $\D$ induced by the simplicial diagram in the domain functor category must be natural isomorphic in a unique manner. 

Since the morphisms in the left-hand triangle in \cref{eq:shadow cond two} are exactly the natural isomorphisms in the diagram below, we can conclude by the remark above that the triangle commutes as desired. 

	\begin{center}
		\begin{tikzcd}[row sep=0.5in, column sep=0.5in]
			\ds\coprod_{X_0} \B(X_0,X_0)  \arrow[dr, "s_0"] \arrow[ddr, "id"', bend right=30, ""{name=U, below}] \arrow[drr, "id", bend left=20, ""{name=A}] & & \\
			& \ds\coprod_{X_0,X_1} \B(X_0,X_1,X_0) \arrow[r, "d_1"] \arrow[d, "d_0"] 
			& \coprod_{X_0} \B(X_0,X_0) \arrow[d, "C_0"] \\
			& \ds \coprod_{X_0} \B(X_0,X_0) \arrow[r, "C_0"] & \D
			\arrow[from=2-2, to=U, Rightarrow, "r"', shorten=0.1in]
			\arrow[from=2-2, to=A, Rightarrow, "l", shorten=0.2in]
			\arrow[from=3-2, to=2-3, Rightarrow, "\theta", shorten=0.2in]
		\end{tikzcd}
	\end{center}
\end{proof}

\begin{lemma} \label{lemma:step	three}
 If $(\alpha_0,\alpha_1,\alpha_2)\colon (C_0,C_1,C_2) \Rightarrow (C_0',C_1',C_2')$ is a morphism in $\Cocone(\B,\D)$, then $\alpha_0$ is a morphism of shadows.
\end{lemma}

\begin{proof} 
	We proved in \cref{lemma:step two} that the functors $C_0$ and $ C_0'$ can be equipped with natural transformations $\theta$ and $\theta'$ with respect to which they are shadows. By definition 
	$$\alpha_0\colon C_0 \to C_0'$$
	is a natural transformation. We need to check that $\alpha_0$ satisfies the conditions formulated in \cref{def:category of shadows} with respect to $\theta$ and $\theta'$.
	
The compatibility of $\alpha_0$ and $\alpha_1$ with the natural isomorphisms in the cocones $C$ and $C'$ implies that the diagram
	\begin{center}
					\begin{tikzcd}[row sep=0.5in, column sep=0.5in]
						C_0d_0 \arrow[rr, "\theta", bend left = 30] \arrow[d, "\alpha_0"] & C_1 \arrow[d, "\alpha_1"] \arrow[r, "\cong"] \arrow[l, "\cong"'] & C_0d_1 \arrow[d, "\alpha_0"]\\
						C_0'd_0 \arrow[rr, "\theta'"', bend right = 30] & C_1' \arrow[r, "\cong"'] \arrow[l, "\cong"] &  C_0'd_1
					\end{tikzcd}
	\end{center}
        commutes and hence that \cref{eq:shadow cat cond one} commutes. 
        
        Next we see that $\alpha_0$ also commutes with associators, meaning \cref{eq:shadow cat cond two} commutes, which follows from the diagram below, by standard whiskering arguments in bicategories, again using, as in the previous step, that $(C_0,C_1, C_2)$ and $(C'_0,C'_1, C'_2)$ are cocones.
        
        {\small \begin{center}
		\begin{tikzcd}[row sep=0.3in, column sep=0.3in]
			& \ds \coprod_{X_0,X_1,X_0} \B(X_0,X_1,X_0) \arrow[dr, "d_0"] \arrow[dd, Rightarrow, "a" description, shorten =0.2in] & &[-0.45in] \\
			\ds\coprod_{X_0,X_1,X_2} \B(X_0,X_1,X_2,X_0) \arrow[ur, "d_0"] \arrow[dr, "d_1"'] & & 
			\ds\coprod_{X_0} \B(X_0,X_0) & \strut \arrow[rrr, bend left = 40, ""{name=B, below}, "C_0"] \arrow[rrr, bend right = 40, ""{name=U}, "C_0'"'] &[0.4in] & & \D \\
			& \ds\coprod_{X_0,X_1,X_0} \B(X_0,X_1,X_0) \arrow[ur, "d_0"'] & 
			\arrow[from=B, to=U, Rightarrow, "\alpha_0" description]
		\end{tikzcd}
	\end{center}}
        Finally, we also have to establish that $\alpha_0$ commutes with unitors, meaning \cref{eq:shadow cat cond three} commutes, which follows from the following diagram, again relying on whiskering and the property of cocones.
	{\small{\begin{center}
			\begin{tikzcd}[row sep=0.3in, column sep=0.3in]
					& \ds \coprod_{X_0,X_1} \B(X_0,X_1,X_0) \arrow[dr, "d_0"] & &[-0.45in] \\
					\ds \coprod_{X_0} \B(X_0,X_0) \arrow[rr, "\mathrm {Id}" near start, ""{name=C}, ""{name=D, below}] \arrow[ur, "s_0"] \arrow[dr, "s_0"'] & & \ds \coprod_{X_0} \B(X_0,X_0) & \strut \arrow[rrr, bend left = 40, ""{name=B, below}, "C_0"] \arrow[rrr, bend right = 40, ""{name=U}, "C_0'"'] &[0.4in] & & \D \\
					& \ds \coprod_{X_0,X_1} \B(X_0,X_1,X_0) \arrow[ur, "d_1"'] & 
					\arrow[from=1-2, to=C, Rightarrow, "r" description]
					\arrow[from=3-2, to=D, Rightarrow, "l" description]
					\arrow[from=B, to=U, Rightarrow, "\alpha_0" description]
			\end{tikzcd}
	\end{center}}}
\noindent Thus $\alpha_0$ is indeed a morphism of shadows.
\end{proof}

\begin{prop} \label{prop:st functor}
	The assignment $\St\colon \Cocone(\B,\D) \to \Sha(\B,\D)$ taking an object $(C_0,C_1,C_2)$ to $C_0$ and a morphism $(\alpha_0,\alpha_1,\alpha_2)$ to $\alpha_0$ is a functor.
\end{prop}

\begin{proof}
 We proved in \cref{lemma:step two} and \cref{lemma:step three} that the assignment is well-defined. Moreover, preservation of composition and of identities of $\St$ is evident from the construction.
\end{proof}

We now proceed to construct the inverse functor $\Un$. We first define the functor on objects, using \cref{thm:trunc simpl lift}, which allows us to construct cocones out of shadows via the following lemma. 

\begin{lemma} \label{lemma:step four}
If $\big(\shadow{-}, \theta\big)$ is a shadow on $\B $ taking values in $\D$, then the triple $\big(\D, \shadow{-}, \theta\big)$ satisfies the conditions of \cref{thm:trunc simpl lift}, hence lifts uniquely to a cocone $((\DD_{\leq 2})^{op})^{\rhd} \to \cat$.
\end{lemma}

\begin{proof}
 Based on the input, the first condition in \cref{thm:trunc simpl lift} translates to the following diagram 
	\begin{equation} \label{eq:step four eq one}
		\begin{tikzcd}[row sep=0.5in, column sep=0.5in]
			\shadow{d_0s_0(-)} \arrow[r, "\theta"] \arrow[dr, "\shadow{r}"'] & \shadow{d_1s_0(-)} \arrow[d, "\shadow{l}"] \\
			& \shadow{-}
		\end{tikzcd},
	\end{equation}
	which commutes by \cref{eq:shadow cond two}. 
	
	The second condition that the natural isomorphisms in \cref{thm:trunc simpl lift} must satisfy translates to the commutativity of the following diagram.
		\begin{equation} \label{eq:step four eq three}
					\begin{tikzcd}[row sep=0.5in, column sep=0.5in]
					\shadow{d_0d_0(-)} \arrow[r, "\theta"] \arrow[d, "\shadow{a}"'] & \shadow{d_1d_0(-)} \arrow[r, "\shadow{a}"] & \shadow{d_0d_2(-)} \\
					\shadow{d_0d_1(-)} \arrow[r, "\theta"] & \shadow{d_1d_1(-)} \arrow[r, "\shadow{a}"] & \shadow{d_1d_2(-)} \arrow[u, "\theta"'] 
					\end{tikzcd}
	\end{equation}
	
	Since the commutativity of this diagram is exactly \cref{eq:shadow cond one} in the definition of a shadow, we can conclude.
\end{proof}

\begin{defn} \label{def:un objects}
 Let $\big(\shadow{-}, \theta\big)$ be a shadow on $\B $ taking values in $\D$. Define $\Un(\big(\shadow{-}, \theta\big))$ to be the cocone uniquely defined via \cref{lemma:step four}. 
\end{defn}

We now define $\Un$ on morphisms. For this part it is instructive to review morphisms of cocones more explicitly. Informally speaking, a morphism of cocones $\alpha\colon C \to C'$  has the following shape. 
\[  
	\begin{tikzcd}[row sep=1.5in, column sep=1in]
			\ds\coprod_{X_0} \B(X_0,X_0) 
		\arrow[r, shorten >=1ex,shorten <=1ex] \arrow[dr, "C'_0"' very near start, bend right=15, ""{name=A}] \arrow[dr, "C_0" very near start, ""{name=U, below}, bend left=15] 
		&
	\ds\coprod_{X_0,X_1} \B(X_0,X_1,X_0) 
		\arrow[l, shift left=1.2, "d_1"] \arrow[l, shift right=1.2, "d_0"'] 
	\arrow[r, shift right, shorten >=1ex,shorten <=1ex ] \arrow[r, shift left, shorten >=1ex,shorten <=1ex] \arrow[d, "C_1'"' very near start, bend right=17, ""{name=B}] \arrow[d, "C_1" very near start, bend left=17, ""{name=V, below}] 
& 
\ds\coprod_{X_0,X_1,X_2} \B(X_0,X_1,X_2,X_0).
\arrow[l] \arrow[l, shift left=2, "d_2"] \arrow[l, shift right=2, "d_0"'] \arrow[dl, "C_2'"' very near start, bend right=10, ""{name=C}] \arrow[dl, "C_2" very near start, bend left=10, ""{name=W, below}]\\
& \D & 
\arrow[from=U, to=A, Rightarrow, "\alpha_0"', shorten <= 0.1in, shorten >= 0in]
\arrow[from=V, to=B, Rightarrow, "\alpha_1"' near end, shorten <= 0.05in, shorten >= 0.0in]
\arrow[from=W, to=C, Rightarrow, "\alpha_2"', shorten <= 0.1in, shorten >= 0.1in]
\end{tikzcd}
\] 
More precisely, a natural transformation $C \to C'$ is  a pseudofunctor 
			$$\alpha\colon ((\DD_{\leq 2})^{op})^{\rhd} \to \Fun([1],\cat)$$ 
			that fits into the following diagram.
			\begin{equation}\label{eq:lift nat}
							\begin{tikzcd}[column sep=0.6in]
							& & \cat \\
								(\DD_{\leq 2})^{op} \arrow[r] \arrow[rr, bend right = 20, "\mathrm{id}"' near start] &((\DD_{\leq 2})^{op})^{\rhd} \arrow[r, dashed, "\alpha" description] \arrow[ur, "C" near end] \arrow[dr, "C'"' near end]& {\Fun([ 1 ] , \cat )} \arrow[u, "\pi_1"'] \arrow[d, "\pi_2"]  \\
								& & \cat 
							\end{tikzcd}
			\end{equation}
	Here, the arrow labeled $\mathrm{id}$ corresponds the identity natural transformation on the functor $\biN^{cy}$ (see \cref{def:cocone}), which is the common value of the restrictions of $C$ and $C'$ to $(\DD_{\leq 2})^{op}$. Thus a morphism of cones is simply a lift of  $\mathrm{id}\colon (\DD_{\leq 2})^{op} \to \Fun([1],\cat)$ to $((\DD_{\leq 2})^{op})^{\rhd}$ and so we can again apply \cref{thm:trunc simpl lift}.

	Let $\alpha\colon \shadow{-}_1 \to \shadow{-}_2$ be a morphism in $\Sha(\B,\D)$. Then $\mathrm{id}_\D\colon \D \to \D$ is indeed an object in $\Fun([1],\cat)$, $\alpha$ is a morphism in $\Fun([1],\cat)$ (which are given by natural isomorphisms), and based on the definition of morphism of shadows, there is a natural isomorphism $\beta$ making the following diagram commute (\cref{def:category of shadows}).

\begin{equation}\label{eq:step five eq two}
		\begin{tikzcd}
				\shadow{d_0(-)}_1 \arrow[r, "\theta_1"] \arrow[d, "\alpha d_0"] & \shadow{d_1(-)}_1 \arrow[d, "\alpha d_1"] \\
				\shadow{d_0(-)}_2 \arrow[r, "\theta_2"] & \shadow{d_1(-)}_2
		\end{tikzcd}
\end{equation}

We now have the following lemma. 

\begin{lemma} \label{lemma:step five}
	 Let $\alpha\colon \shadow{-}_1 \to \shadow{-}_2$ be a morphism in $\Sha(\B,\D)$ and denote the natural isomorphism making \cref{eq:step five eq two} commute by $\beta$. Then the triple $(\mathrm{id}_\D, \alpha, \beta)$ satisfies the three conditions of \cref{thm:trunc simpl lift}, and thus lifts uniquely to a cocone $((\DD_{\leq 2})^{op})^{\rhd} \to \Fun([1],\cat)$.
\end{lemma}

\begin{proof}
	Based on the input, the first condition in \cref{thm:trunc simpl lift} translates to the following diagram 
\begin{equation} \label{eq:step five eq one}
				\begin{tikzcd}[row sep=0.4in, column sep=0.6in]
						& \shadow{d_0s_0}_1 \arrow[rr, "\theta_1"] \arrow[dl, "\alpha d_0s_0" description] \arrow[ddr, "\shadow{r}_1" near start] & & \shadow{d_1s_0}_1 \arrow[dl, "\alpha d_1s_0" description] \arrow[ddl, "\shadow{l}_1"] \\
						\shadow{d_0s_0}_2 \arrow[rr, "\theta_2", crossing over] \arrow[ddr, "\shadow{r}_2"'] & & \shadow{d_1s_0}_2 \arrow[ddl, "\shadow{l}_2" description, crossing over] & \\
						& & \shadow{-}_1 \arrow[dl, "\alpha" description] & \\
						& \shadow{-}_2 & &
				\end{tikzcd},
\end{equation}
whereas the second condition requires the diagram below to commute.
\begin{equation} \label{eq:step five eq three}
				\begin{tikzcd}[row sep=0.3in, column sep=0.1in]
				\shadow{d_0d_0(-)}_1 \arrow[dr, "\alpha d_0d_0" description] \arrow[rr, "\theta_1"] \arrow[dd, "\shadow{a}_1"'] & & \shadow{d_1d_0(-)}_1 \arrow[dr, "\alpha d_1d_0" description] \arrow[rr, "\shadow{a}_1"] & & \shadow{d_0d_2(-)}_1 \arrow[dr, "\alpha d_0d_2" description] \\
				& \shadow{d_0d_0(-)}_2 \arrow[rr, "\theta_2"] \arrow[dd, "\shadow{a}_2"' near start] & & \shadow{d_1d_0(-)}_2  & & \shadow{d_0d_2(-)}_2\\
				\shadow{d_0d_1(-)}_1 \arrow[rr, "\theta_1" near end] \arrow[dr, "\alpha d_0d_1" description] & &  \shadow{d_1d_1(-)}_1 \arrow[rr, "\shadow{a}_1"] \arrow[dr, "\alpha d_1d_1" description] & & \shadow{d_1d_2(-)}_1 \arrow[uu, "\theta_1"' near start] \arrow[dr, "\alpha d_1d_2" description] 
				\\
				& \shadow{d_0d_1(-)}_2 \arrow[rr, "\theta_2"] & &  \shadow{d_1d_1(-)}_2 \arrow[rr, "\shadow{a}_2"] & & \shadow{d_1d_2(-)}_2 \arrow[uu, "\theta_2"'] 
				\arrow[from=2-4, to=2-6, "\shadow{a}_2" near start, crossing over]
				\end{tikzcd}
\end{equation}
	To establish the commutativity of these two diagrams, we use that the outer squares and triangles in the diagram commute (by \cref{def:category of shadows}), whence the inside diagram commutes as well, since categories are $2$-coskeletal.
\end{proof}

\begin{defn} \label{def:un morphisms}
 Let $\alpha\colon \big(\shadow{-}_1, \theta_1\big) \to \big(\shadow{-}_2, \theta_2\big)$ be a morphism of shadows from $\B$ to $\D$. Define $\Un(\alpha)$ to be the cocone uniquely defined via \cref{lemma:step five}. 
\end{defn}

We now have all the pieces to construct the desired functor.

\begin{prop} \label{prop:un functor}
 The assignment $\Un\colon \Cocone(\B,\D) \to \Sha(\B,\D)$ taking an object $\shadow{-}$ to $\Un(\shadow{-})$ and a morphism $\alpha$ to $\Un(\alpha)$ is a functor.
\end{prop}

\begin{proof}
 We already have a construction on objects and morphisms. The fact that $\Un$ preserves composition follows directly from the fact that given two composable morphisms $\alpha_1, \alpha_2$, $\Un(\alpha_1 \circ \alpha_2)$ is uniquely determined via a lifting condition (as given in \eqref{eq:lift nat}), which is in particular satisfied by $\Un(\alpha_1) \circ \Un(\alpha_2)$. The case for identities is similar.
\end{proof}
	
We now need to prove that the two functors $\St$ and $\Un$ are mutual inverses. 

\begin{lemma} \label{lemma:stun is identity}
 The composition $\St\Un\colon \Sha(\B,\D) \to \Sha(\B,\D)$ is the identity.
\end{lemma}

\begin{proof}
If  $\big(\shadow{-}, \theta\big)$ is a shadow, then 
$$\Un\Big(\shadow{-},\theta \Big) = \Big(\shadow{-},\shadow{d_0(-)}, \shadow{d_0d_0(-)}\Big)$$ 
and so
	$$\St\Un\big(\shadow{-},\theta\big) = \big(\shadow{-},\theta\big)$$
Hence, $\St\Un$ is the identity functor.	
\end{proof}

\begin{lemma} \label{lemma:unst equivalent identity}
		The composition $\St\Un\colon \Sha(\B,\D) \to \Sha(\B,\D)$ is naturally equivalent to the identity.
	\end{lemma}

	\begin{proof}
		Let $C=(C_0,C_1,C_2)$ be an arbitrary cocone. By \cref{cor:ps-2} this cocone (which is by definition a pseudofunctor) is naturally equivalent to a strict cocone, which, by \cref{thm:trunc simpl lift} is of the form $(C_0,C_0d_0,C_0d_0d_0)$. By functoriality $\St$ and $\Un$ preserve natural isomorphisms, so it suffices to prove that $\Un\St$ takes this particular cocone to itself. 
	
 This follows by direct computation. Indeed $\St(C_0,C_0d_0,C_0d_0d_0)$ is a shadow with shadow functor $C_0$ and $\Un$, by its very definition, takes this shadow to the cocone $(C_0,C_0d_0,C_0d_0d_0)$.
 \end{proof}

	\begin{prop} \label{prop:st and un inverses}
		The pair $(\St, \Un)$ forms an equivalence between $\Cocone(\C,\D)$ and $\Sha(\B,\D)$.
	\end{prop}

	\begin{proof}
		By \cref{lemma:stun is identity} and \cref{lemma:unst equivalent identity}, both compositions $\St\Un$ and $\Un\St$ are naturally equivalent to the identity. 
	\end{proof}

\section{Hochschild Homology of \texorpdfstring{$2$}{2}-Categories} \label{sec:thh two cat}
In this section we compute the Hochschild homology of various $2$-categories of interest. We apply these computations to the study of Morita invariance in \cref{subsec:morita invariance traces}, but they can also be of independent interest. The main computational result is \cref{thm:thh two cat}, which gives an explicit presentation of $\biHH(\B)$ for any $2$-category $\B$. The main application is \cref{thm:thh adj}, which gives explicit descriptions of $\biHH(\Mon)$, $\biHH(\CoMon)$ and $\biHH(\Adj)$ in terms of the paracyclic category, where $\Adj$ is the free adjunction $2$-category, $\Mon$ is the free monad $2$-category, and $\CoMon$ is the free comonad $2$-category.

Recall that for any small category $\C$ and cocomplete category $\D$, there is a \emph{tensor (or coend) functor}
$$ - \otimes - \colon\Fun(\C,\D) \times \Fun(\C^{op},\D) \to \D$$ 
that takes a a pair $(F,G)$ to the coequalizer of
\begin{center}
    \begin{tikzcd}
      \ds \coprod_{f\colon c \to c'} F(c) \times G(c') \arrow[r, shift left=0.05in] \arrow[r, shift right=0.05in] & \ds \coprod_c F(c) \times G(c) 
    \end{tikzcd}.
\end{center}

\begin{rmk}\label{rmk:tensor facts}
 The tensor functor defined above satisfies the following properties.
 \begin{itemize}
     \item \cite[Example 4.1.3]{riehl2014categorybook}: If $\D$ has a terminal object and $F\colon \C \to \D$ is the terminal functor, then $F \otimes G = \colim_{\C^{op}} G$.
     \item \cite[Example 4.1.5]{riehl2014categorybook}: If $F = \Hom_\C(c,-)$, then $F \otimes G \cong G(c)$.
     \item The tensor functor preserves colimits. 
 \end{itemize}
 For more details regarding the tensor functor, see \cite[Section 4.1]{riehl2014categorybook}. 
\end{rmk}

Below we provide a more explicit description of pseudo-colimits in terms of the tensor functor, for which the following technical result proves useful.

\begin{lemma} \label{lemma:tensor functor left quillen}
 The tensor functor 
 $$- \otimes - \colon \cat^{\DD} \times \cat^{\DD^{op}} \to \cat$$ 
 is left Quillen, with respect to the canonical model structure on the codomain and the Reedy model structure on each factor of the domain.
\end{lemma}

\begin{rmk} \label{rmk:canonical model structure} 
 The canonical model structure on $\cat$ is Cartesian, and
 \begin{itemize}
     \item the cofibrations are functors that are injective on objects, and
     \item weak equivalences are categorical equivalences.
 \end{itemize}
 See also \cite[Example 11.3.9]{riehl2014categorybook} for more details.
\end{rmk}

\begin{proof}
  Since the canonical model structure on $\cat$ is Cartesian, we can conclude by an argument analogous to that in \cite[18.4.11]{hirschhorn2003modelcats}. See also the discussion in \cite[Section 14.3]{riehl2014categorybook}.
\end{proof}

To use this lemma to describe pseudo-colimits,  we need the following definition.

\begin{defn} \label{defn:i bullet}
 Let $I[n]$ denote the category with $n$ objects and exactly one morphism from any object to any other object (which in particular implies that all morphisms are isomorphisms). The collection of all $I[n]$ underlies a cosimplicial category 
 $$I[\bullet]\colon \DD \to \cat.$$
 \end{defn}

\begin{prop} \label{prop:pseudo-colimit homotopy colimit}
If $F\colon \DD^{op} \to \cat$ is a strict functor, then the pseudo-colimit of $F$ is equivalent to the tensor product $I[\bullet] \otimes F$.
\end{prop}

\begin{proof}
  By \cite{gambino2008homotopylimits} the pseudo-colimit of $F$ coincides with the homotopy colimit of $F$ in the canonical model structure on $\cat$.  By \cref{rmk:tensor facts} the colimit of $F$ is given by $* \otimes F$ and so, by \cref{lemma:tensor functor left quillen}, the homotopy colimit is the left derivative of this functor. 
  
  Observe that $F\colon \DD^{op} \to \cat$ is already Reedy cofibrant. Indeed, the latching object $L_nF$ is a full subcategory of $F_n$ and so by construction $L_nF \to F_n$ is a cofibration (\cref{rmk:canonical model structure}). We need thus only to find a Reedy cofibrant replacement of the terminal diagram. 
  
  We claim that $I[\bullet]\colon \DD \to \cat$ (\cref{defn:i bullet}) is such a cofibrant replacement. The map to $I[\bullet] \to *$ is obviously a level-wise weak equivalence (\cref{rmk:canonical model structure}), so it suffices to show that $I[\bullet]$ is Reedy cofibrant. This follows from direct computation, as the latching object $L_nI$ has $n+1$ objects, and the map $L_nI \to I[n]$ is the identity on objects and hence a cofibration (\cref{rmk:canonical model structure}).
\end{proof}

We next provide a more explicit description of $I[\bullet] \otimes F$. Recall there is an adjunction 

\begin{equation} \label{eq:fundamental cat}
     \begin{tikzcd}[row sep=0.5in, column sep=0.5in]
     \sSet \arrow[r, shift left=1.8, "\tau_1", "\bot"'] & \cat \arrow[l, shift left=1.8, "N"] 
     \end{tikzcd}
  \end{equation}
where the right adjoint is the nerve functor, and the left adjoint is known as the {\it fundamental category}.

\begin{rmk} \label{rmk:fundamental cat relations} 
The fundamental category can be described explicitly as follows.
\begin{itemize}
    \item $\Obj_{\tau_1X} = X_0$.
    \item The set $X_1$ generates the morphisms of $\tau_1X$.
    \item The morphisms of $\tau_1X$ satisfy the relations $d_1\sigma \sim d_0\sigma \circ d_2\sigma$ for all $\sigma \in X_2$ and $s_0x \sim \mathrm{id}_x$ for all $x \in X_0$.
\end{itemize}

See \cite[Section 11]{rezk2017qcats} for more details.
\end{rmk} 

The following simple property of the fundamental category proves useful to us below.

\begin{lemma} \label{lemma:colimit fundamental cat}
For all functors $F\colon J \to \cat$,  $$\colim_J F \cong \tau_1 \colim_{J}N(F).$$
\end{lemma}

In particular, for every strict functor $F\colon \DD^{op} \to \cat$,

$$I[\bullet] \otimes F\cong \tau_1(NI[\bullet] \otimes NF).$$

\begin{proof}
For any category $\D$, there is a chain of isomorphisms 
   \begin{align*}
  \Hom_{\cat}\big(\tau_1\colim_JN(F),\D\big) 
  & \cong \Hom_{\sSet}\big(\colim_JN(F),N\D\big) & \text{adjunction} (\tau_1,N)\\
  & \cong \lim_J\Hom_{\sSet}\big(N(F)(j),N\D\big) & \text{definition of colimit}\\
  & \cong \lim_J\Hom_{\cat}\big(F(j),\D\big) & \text{$N$ fully faithful,}\\ 
  \end{align*}
  which implies that  $\tau_1\colim_JN(F)$ satisfies the universal property of a colimit, and we can conclude.
\end{proof}

Before we proceed, we make the following general observation regarding computations with bisimplicial sets, which will be key further below.

\begin{rmk} \label{rmk:bisimplicial tensor}
  Let $\Delta[\bullet]\colon \DD \to \sSet$ be the Yoneda embedding. For every bisimplicial set $X$,  
  \[ 
   \Delta[\bullet]_n \otimes X_{\bullet k} \cong \Delta[n] \otimes X_{\bullet k} \cong X_{nk},    
  \]
   where the last step follows from \cref{rmk:tensor facts}, as $\Delta[n]$ is a representable functor.
  In particular, 
  $$(\Delta[\bullet] \otimes X)_k \cong X_{kk},$$ 
 i.e., $\Delta[\bullet] \otimes X$ is the diagonal of the bisimplicial set $X$.
\end{rmk}

By definition $\biHH(\B)$ is a pseudo-colimit, so we can use our newly gained understanding of pseudo-colimits of categories to give a more precise characterization of this construction via generators and relations. 

For the next proof, recall that if $\B$ is a $2$-category, then for every object $X$, there is a unit object $\id_X$ in $\B(X,X)$. Additionally, for every $1$-morphism $F$ in $\B(X,Y)$, there is an identity $2$-morphism $\rid_F$. Moreover,  the composition of two $1$-morphisms or $2$-morphisms in $\B(X,Y)$ and $\B(Y,Z)$ is denoted via $\ast$, to distinguish it from the composition internal to the  categories $\B(X,Y)$. 

\begin{rmk} \label{rmk:explicit}
 For the next theorem, we need a detailed understanding of the bisimplicial set $(N\biN^{cy}\B)_{\bullet\bullet}$ for a fixed 2-category $\B$, so we present here an explicit diagram for the benefit of the reader. 
 
 For a category $\C$, let $\Comp_\C$ denote the set of composable morphisms $(f,g)$, i.e.,  such that $\Dom_g=\Cod_f$. Now, using \cref{rmk:product notation}, we can depict the bisimplicial set $(N\biN^{cy}\B)_{\bullet\bullet}$ as follows, with certain morphisms described explicitly. 
 
\strut \hspace{-1in}
 \begin{tikzcd}[row sep=0.5in, column sep=0.5in]
   \ds\coprod_{X \in \Obj_\B} \Obj_{\B(X,X)} 
   \arrow[d, "\rid_F" description, shorten >=1ex,shorten <=1ex] \arrow[r, "{(F, \id)}" description]
   &  \ds\coprod_{X,Y \in \Obj_\B} \Obj_{\B(X,Y) \times \B(Y,X)} 
   \arrow[d, shorten >=1ex,shorten <=1ex, "\rid_{(F,G)}" description]
   \arrow[l, shift left=2, "G \ast F"] \arrow[l, shift right=2, "F \ast G"'] 
   \arrow[r, shift right=2.4] \arrow[r, shift left=2.4] 
   & \ds\coprod_{X,Y,Z \in \Obj_\B} \Obj_{\B(X,Y) \times \B(Y,Z) \times \B(Z,X)} 
   \arrow[d, shorten >=1ex,shorten <=1ex]
   \arrow[l, "{(F, G \ast H)}" description] \arrow[l, shift right=3.5, "{(F\ast G , H)}"'] \arrow[l, shift left=3.5, "{(H \ast F, G)}"]
   \arrow[r, shorten >=1ex,shorten <=1ex] \arrow[r, shift left=2, shorten >=1ex,shorten <=1ex] \arrow[r, shift right=2, shorten >=1ex,shorten <=1ex]
   & \cdots 
   \arrow[l, shift right=1] \arrow[l, shift left=1] \arrow[l, shift right=3] \arrow[l, shift left=3] 
   \\
    \ds\coprod_{X \in \Obj_\B} \Mor_{\B(X,X)} 
   \arrow[d, shift right=2.1, shorten >=1ex,shorten <=1ex ] \arrow[d, shift left=2.1, shorten >=1ex,shorten <=1ex]
   \arrow[u, shift left=3, "\Dom_\alpha"] \arrow[u, shift right=3, "\Cod_\alpha"'] \arrow[r, "{(\alpha, \rid_{\id})}" description]
   &  \ds\coprod_{X,Y \in \Obj_\B} \Mor_{\B(X,Y) \times \B(Y,X)} 
   \arrow[d, shift right=2.6, shorten >=1ex,shorten <=1ex ] \arrow[d, shift left=2.6, shorten >=1ex,shorten <=1ex ]
   \arrow[u, shift left=4.9, "{(\Dom_\alpha, \Dom_\beta)}"] \arrow[u, shift right=4.9, "{(\Cod_\alpha, \Cod_\beta)}"']
   \arrow[l, shift left=2, "\beta \ast \alpha"] \arrow[l, shift right=2, "\alpha \ast \beta"'] 
   \arrow[r, shift right=2.4] \arrow[r, shift left=2.4]
   &  \ds\coprod_{X,Y,Z \in \Obj_\B} \Mor_{\B(X,Y) \times \B(Y,Z) \times \B(Z,X)} 
   \arrow[d, shift right=2.6, shorten >=1ex,shorten <=1ex ] \arrow[d, shift left=2.6, shorten >=1ex,shorten <=1ex]
   \arrow[u, shift left=1.2] \arrow[u, shift right=1.2]
   \arrow[l, "{(\alpha, \beta \ast \gamma)}" description] \arrow[l, shift right=3.5, "{(\alpha\ast \beta , \gamma)}"'] \arrow[l, shift left=3.5, "{(\gamma \ast \alpha, \beta)}"]
   \arrow[r, shorten >=1ex,shorten <=1ex] \arrow[r, shift left=2, shorten >=1ex,shorten <=1ex] \arrow[r, shift right=2, shorten >=1ex,shorten <=1ex]
   & \cdots 
   \arrow[l, shift right=1] \arrow[l, shift left=1] \arrow[l, shift right=3] \arrow[l, shift left=3] 
   \\[0.5in]
    \ds\coprod_{X \in \Obj_\B} \Comp_{\B(X,X)}  
   \arrow[d, shorten >=1ex,shorten <=1ex] \arrow[d, shift left=2, shorten >=1ex,shorten <=1ex] \arrow[d, shift right=2, shorten >=1ex,shorten <=1ex]
   \arrow[u, "\alpha_2 \circ \alpha_1" description] \arrow[u, shift left=4.2, "\alpha_1"] \arrow[u, shift right=4.2, "\alpha_2"'] 
   \arrow[r, shorten >=1ex,shorten <=1ex]
   &  \ds\coprod_{X,Y\in \Obj_\B} \Comp_{\B(X,Y) \times \B(Y,X)}
   \arrow[d, shorten >=1ex,shorten <=1ex] \arrow[d, shift left=2, shorten >=1ex,shorten <=1ex] \arrow[d, shift right=2, shorten >=1ex,shorten <=1ex]
   \arrow[u]
    \arrow[u, "{(\alpha_2 \circ \alpha_1 ,}" description, start anchor={[yshift=0.2in]}, shorten >=0.5in,shorten <=0.5in, dash]
    \arrow[u, "\beta_2\circ \beta_1)" description, start anchor={[yshift=-0.1in]}, shorten >=0.5in,shorten <=0.5in, dash] 
    \arrow[u, shift left=5.6, "{(\alpha_1 , \beta_1)}"]
    \arrow[u, shift right=5.6, "{(\alpha_2 , \beta_2)}"'] 
   \arrow[l, shift left=1.2, "{(\beta_1\ast \alpha_1 , \beta_2\ast\alpha_2)}"] \arrow[l, shift right=1.2, "{(\alpha_1\ast \beta_1 , \alpha_2\ast\beta_2)}"'] 
   \arrow[r, shift right, shorten >=1ex,shorten <=1ex ] \arrow[r, shift left, shorten >=1ex,shorten <=1ex] 
   & \ds\coprod_{X,Y,Z \in \Obj_\B} \Comp_{\B(X,Y) \times \B(Y,Z) \times \B(Z,X)} 
   \arrow[d, shorten >=1ex,shorten <=1ex] \arrow[d, shift left=2, shorten >=1ex,shorten <=1ex] \arrow[d, shift right=2, shorten >=1ex,shorten <=1ex]
   \arrow[u]
    \arrow[u, "{(\alpha_2 \circ \alpha_1 ,}" description, start anchor={[yshift=0.3in]}, shorten >=0.5in,shorten <=0.5in, dash]
    \arrow[u, "{\beta_2\circ \beta_1,}" description, start anchor={[yshift=0in]}, shorten >=0.5in,shorten <=0.5in, dash]
    \arrow[u, "\gamma_2\circ \gamma_1)" description, start anchor={[yshift=-0.3in]}, shorten >=0.8in,shorten <=0.5in, dash]
    \arrow[u, shift left=5.6, "{(\alpha_1 , \beta_1 , \gamma_1)}"]
    \arrow[u, shift right=5.6, "{(\alpha_2 , \beta_2 , \gamma_2)}"'] 
   \arrow[l] \arrow[l, shift left=2] \arrow[l, shift right=2] 
   \arrow[r, shorten >=1ex,shorten <=1ex] \arrow[r, shift left=2, shorten >=1ex,shorten <=1ex] \arrow[r, shift right=2, shorten >=1ex,shorten <=1ex]
   & \cdots 
   \arrow[l, shift right=1] \arrow[l, shift left=1] \arrow[l, shift right=3] \arrow[l, shift left=3] 
   \\
   \ \vdots \ 
   \arrow[u, shift right=1] \arrow[u, shift left=1] \arrow[u, shift right=3] \arrow[u, shift left=3]
   & \ \vdots \ 
   \arrow[u, shift right=1] \arrow[u, shift left=1] \arrow[u, shift right=3] \arrow[u, shift left=3]
   & \ \vdots \ 
   \arrow[u, shift right=1] \arrow[u, shift left=1] \arrow[u, shift right=3] \arrow[u, shift left=3]
 \end{tikzcd}
\end{rmk}

\begin{thm} \label{thm:thh two cat}
If $\B$ is a $2$-category, then $\biHH(\B)$ admits the following presentation.
 \begin{itemize}
     \item $\Obj_{\biHH(\B)} = \coprod_{X \in \Obj_\B} \Obj_{\B(X,X)}$
     \medskip
    
     \item Generating morphisms
     \begin{enumerate}
         \item Symbols $\alpha\colon \Dom_\alpha \to \Cod_\alpha$ for all $\alpha \in \coprod_{X \in \Obj_\B} \Mor_{\B(X,X)}$.
         \item Symbols $(F,G)\colon F*G \to G*F$ for all $F \in  \Obj_{\B(X,Y)}$, $G \in \Obj_{\B(Y,X)}$ and all $X,Y \in \Obj_\B$.
     \end{enumerate}
     \medskip
     
     \item Relations
     \begin{enumerate}
         \item $(F,\id_X)\colon F \to F$ is the identity morphism of the object $F \in \Obj_{\B(X,X)}$.
         \item The composite of a pair of symbols $\alpha, \beta\in \Mor_{\B(X,X)}$ such that $\Dom_\beta = \Cod_\alpha$ is equal to their composite $\beta\circ \alpha$ in the category $\B(X,X)$.
         \item All symbols $(F,G)$ are isomorphisms.
         \item For all  symbols $\alpha\colon F \to G$ and $\beta\colon H \to L$ in $\Mor_{\B(X,X)}$, 
         $$(G,L) \circ (\alpha\ast\beta) = (\beta\ast\alpha) \circ (F,H).$$
         \item For any three $1$-morphisms $F \in \Obj_{\B(X,Y)}$, $G \in \Obj_{\B(Y,Z)}$, $H \in \Obj_{\B(Z,X)}$, 
         $$(F, G \ast H) = (H\ast F,G) \circ (F \ast G,H).$$
     \end{enumerate}
 \end{itemize}
\end{thm}

\begin{proof}
According to \cref{def:biHH}, $\biHH(\B)$ is the colimit of the simplicial diagram $\biN^{cy}_\bullet(\B)$, which we denote henceforth by $\biN^{cy}_\bullet$ to simplify notation within the proof. Because $\B$ is a $2$-category, $\biN^{cy}_\bullet$ is a strict diagram rather than just a pseudo-diagram, whence by \cref{prop:pseudo-colimit homotopy colimit}, $\biHH(\B)$ is equivalent to the tensor $I[\bullet] \otimes \biN^{cy}_\bullet$, which is isomorphic to the fundamental category of $NI[\bullet] \otimes (N\biN^{cy})_{\bullet\bullet}$, by \cref{lemma:colimit fundamental cat}.
 
In order to evaluate the fundamental category we need to better understand levels $0,1,$ and $2$ of the simplicial set $NI[\bullet] \otimes (N\biN^{cy}_{\bullet\bullet})$. For a fixed $k \geq 0$, the evaluation map $(-)_k\colon\sSet \to \set$ preserves colimits, so there is a bijection of sets
$$(N(I[\bullet]) \otimes N\biN^{cy}_{\bullet\bullet})_k \cong NI[\bullet]_k \otimes (N\biN^{cy})_{\bullet k}.$$ 
 
If $k = 0$, then $I[\bullet]_0 = \Delta[\bullet]_0$, the representable functor, and so by the argument above
 $$\Obj_{\biHH(\B)}=(NI[\bullet] \otimes N\biN^{cy}_{\bullet\bullet})_0 = (N\Delta[\bullet] \otimes N\biN^{cy}_{\bullet\bullet})_0= (N\biN^{cy})_{00} = \coprod_X \Obj_{\B(X,X)},$$
as desired.
 
If $k = 1$, then $NI[\bullet]_1 = \Mor_{I[\bullet]} = \Delta[\bullet]_1 \coprod_{\Delta[\bullet]_0} \Delta[\bullet]_1$, where the pushout is that of the degeneracy map $s_0\colon \Delta[\bullet]_0 \to \Delta[\bullet]_1$ with itself, since any two objects of $I[n]$ are connected by a unique isomorphism. Hence, 
\begin{align*}
  (NI[\bullet] \otimes N\biN^{cy}_{\bullet\bullet})_1 & \cong N(I[\bullet])_1 \otimes (N\biN^{cy}_{\bullet 1})\\
  & \cong (\Delta[\bullet]_1 \coprod_{\Delta[\bullet]_0} \Delta[\bullet]_1) \otimes (N\biN^{cy}_{\bullet 1})\\
  & \cong (\Delta[\bullet]_1 \otimes N\biN^{cy}_{\bullet 1}) \coprod_{(\Delta[\bullet]_0 \otimes N\biN^{cy}_{\bullet 1})} (\Delta[\bullet]_1 \otimes N\biN^{cy}_{\bullet 1}) \\
  & \cong (N\biN^{cy})_{11} \coprod_{(N\biN^{cy})_{01}}  (N\biN^{cy})_{11},
\end{align*}
where the last step follows from \cref{rmk:bisimplicial tensor}.
  
Recall from \cref{rmk:explicit} that  $(N\biN^{cy})_{11}= \coprod_{X,Y} \Mor_{\B(X,Y) \times \B(Y,X)}$, i.e., its elements are pairs $(\alpha,\beta)$ with $\alpha\colon F \to G$ and $\beta\colon H \to L$, with face maps made explicit in \cref{rmk:explicit}. It follows that the set   $(NI[\bullet] \otimes N\biN^{cy}_{\bullet\bullet})_1$ consists of pairs $(\alpha,\beta)\colon \Dom_\alpha \ast \Dom_\beta \to \Cod_\beta \ast \Cod_\alpha$ and $(\alpha, \beta^{-1})\colon \Dom_\beta \ast \Dom_\alpha \to \Cod_\alpha \ast \Cod_\beta$, such that for every $\alpha \in \Mor_\B(X,X)$ we have $(\alpha,\id_X) = (\alpha,\id_X^{-1})\colon \Dom_\alpha \to \Cod_\alpha$.
 
 We now use the information in level $2$ to describe the various relations between the morphisms. Notice that 
 $$NI[\bullet]_2 \cong NI[\bullet]_1 \times_{NI[\bullet]_0} NI[\bullet]_1.$$ 
 Hence, the elements in the set $(NI[\bullet] \otimes N\biN^{cy}_{\bullet \bullet})_2$ are of the form 
 \begin{itemize}
     \item $((\alpha_0,\alpha_1),(\beta_0,\beta_1),(\gamma_0,\gamma_1))$,
     \item $((\alpha_0,\alpha_1),(\beta_0^{- 1},\beta_1^{\pm 1}),(\gamma_0^{\pm 1},\gamma_1^{\pm 1}))$,
     \item $((\alpha_0,\alpha_1),(\beta_0^{\pm 1},\beta_1^{\pm 1}),(\gamma_0^{\pm 1},\gamma_1^{\pm 1}))$,
     \item $((\alpha_0,\alpha_1),(\beta_0^{\pm 1},\beta_1^{\pm 1}),(\gamma_0^{\pm 1},\gamma_1^{\pm 1}))$,
 \end{itemize}
  where $\alpha_0\colon F_0 \to F_1,\alpha_1\colon F_1 \to F_2\in\Mor_\B(X,Y)$, $\beta_0\colon G_0 \to G_1,\beta_1\colon G_1 \to G_2\in\Mor_\B(Y,Z)$ and $\gamma_0\colon H_0 \to H_1,\gamma_1\colon H_1 \to H_2\in\Mor_\B(Z,X)$.
 
 We focus first on relations induced by elements of the form $((\alpha_0\alpha_1),(\beta_0,\beta_1),(\gamma_0,\gamma_1))$. Using the morphisms given in \cref{rmk:explicit}, and their compositions, it follows that
  \begin{itemize}
     \item $d_0((\alpha_0,\alpha_1),(\beta_0,\beta_1),(\gamma_0,\gamma_1))=(\gamma_1 \ast \alpha_1,\beta_1)$
     \item $d_1((\alpha_0,\alpha_1),(\beta_0,\beta_1),(\gamma_0,\gamma_1))=(\alpha_1\alpha_0,\beta_1\beta_0 \ast \gamma_1\gamma_0)$
     \item $d_2((\alpha_0,\alpha_1),(\beta_0,\beta_1),(\gamma_0,\gamma_1))=(\alpha_0 \ast \beta_0,\gamma_0)$
 \end{itemize}
 which means that the following relation holds.
 
 \begin{equation} \label{eq:thh two}
 (\alpha_1\alpha_0, \beta_1\beta_0 \ast \gamma_1\gamma_0) =  (\gamma_1 \ast \alpha_1,\beta_1) \circ (\alpha_0 \ast \beta_0,\gamma_0)
  \end{equation}
  
 Fix $\alpha\colon F \to G \in \Mor_\B(X,Y)$ and $\beta\colon H \to L \in \Mor_\B(Y,X)$.
If $(\alpha_0,\beta_0,\gamma_0) = (\alpha,\beta,\id_X)$ and $(\alpha_1,\beta_1,\gamma_1) = (\rid_G,\rid_L,\id_X)$, then $(\alpha,\beta) = (\rid_G,\rid_L) \circ (\alpha \ast \beta,\id_X)$. On the other hand, if $(\alpha_0,\beta_0,\gamma_0) = (\rid_F,\rid_H,\id_X)$ and $(\alpha_1,\beta_1,\gamma_1) = (\alpha,\beta,\id_X)$, then $(\alpha,\beta) = (\beta\ast\alpha,\id_X) \circ (\rid_F,\rid_H)$. Hence,
 \begin{equation} \label{eq:commutative} 
 (\alpha,\beta) = (\rid_G,\rid_L) \circ (\alpha \ast \beta,\id_X) = (\beta\ast\alpha,\id_X) \circ (\rid_F,\rid_H).
 \end{equation}
 
 Now, by symmetry, we can repeat the same arguments for $(\alpha,\beta^{-1})$ to conclude that
 \begin{equation} \label{eq:commutative inverse} 
 (\alpha,\beta^{-1}) = (\alpha\ast \beta,\id_X) \circ (\rid_F,\rid_H^{-1}) = (\rid_G,\rid_L^{-1}) \circ (\beta \ast\alpha,\id_X)
 \end{equation}

 Henceforth, we denote the morphisms of the form $(\alpha, \id_X)$ by  $\alpha$, morphisms of the form $(\rid_F,\rid_G)$ by $(F,G)$, and morphisms of the form $(\rid_F,\rid_G^{-1})$ by $(F,G)^{-1}$. 
 
 In \eqref{eq:commutative} and \eqref{eq:commutative inverse}, we have already established that every arbitrary morphism in $\biHH(\B)$ is generated by these three classes of morphisms. In order to finish the proof we need to understand how these morphisms interact with each other and in particular confirm the relations from the statement.
 
\begin{enumerate}
    \item 
 The first relation follows from the definition of the fundamental category.
 
 \item Let $\alpha_0\colon F \to G, \alpha_1\colon G \to H \in \Mor_\B(X,X)$ be two morphisms. \eqref{eq:thh two}  with $(\alpha_0,\beta_0,\gamma_0) = (\alpha_0,\id_X,\id_X)$ and $(\alpha_1,\beta_1,\gamma_1) = (\alpha_1,\id_X,\id_X)$ implies that 
 $$(\alpha_1\alpha_0,\id_X) = (\alpha_1, \id_X)\circ  (\alpha_0,\id_X),$$
 which proves the second equation.
 
\item  Now for two objects $F\colon X \to Y$ and $G\colon Y \to X$ we have by definition of $I[2]$ a $2$-cell that we denote $(\rid_F,\rid_G,\rid_G^{-1})$ that witnesses the composition $(\rid_F,\rid_G^{-1}) \circ (\rid_F,\rid_G) = (\rid_{FG},\id_X)$. We can similarly deduce that $(\rid_F,\rid_G) \circ (\rid_F,\rid_G^{-1})= (\rid_{GF},\id_Y)$. This proves that $(F,G)$ is in fact an isomorphism with inverse $(F,G)^{-1}$
  
\item  We already confirmed the fourth condition in \eqref{eq:commutative}.
 
\item  Finally, we want to understand when two morphisms of the form $(F,G)$ commute. Plugging in $(\alpha_0,\beta_0,\gamma_0) = (\alpha_1,\beta_1,\gamma_1) = (\rid_F,\rid_G,\rid_H)$ into \eqref{eq:thh two} we get 
$$(\rid_F,\rid_G \ast\rid_H) = (\rid_H \ast \rid_F,\rid_G) \circ (\rid_F \ast \rid_G,\rid_H), $$
which gives us the desired relation $(F, G \ast H) = (H\ast F,G) \circ (F \ast G,H)$.
\end{enumerate}
 As we have checked all possible relations between all generating morphisms, we have a complete characterization of $\biHH(\B)$ and hence are done.
\end{proof} 

In certain cases we can simplify the result in \cref{thm:thh two cat} further. For a given strict monoidal category $\C$, let $B\C$ be the category with on object $*$ and $B\C(*,*) = \C$.

\begin{cor} \label{cor:thh two cat on obj}
 Let $(\C, \otimes, \id)$ be a strict monoidal category. Then $\biHH(B\C)$ admits the following presentation.
 \begin{itemize}
      \item $\Obj_{\biHH(\B)} =  \Obj_{\C}$
     \item Generating Morphisms
     \begin{itemize}
         \item Symbols $\alpha\colon \Dom_\alpha \to \Cod_\alpha$ where $\alpha \in \Mor_{\C}$.
         \item Symbols $(X,Y)\colon X \otimes Y \to Y \otimes X$ where $X, Y \in  \Obj_{\C}$
     \end{itemize}
     \item Subject to the Relations
     \begin{itemize}
         \item $(X,\id)\colon X \to X$ is the identity morphism of the object $X \in \Obj_{\C}$.
         \item For two symbols $\alpha, \beta$ coming from $\Mor_{\C}$ such that $\Dom_\beta = \Cod_\alpha$, the composition is given by the composition $\beta\circ \alpha$ given in $\Mor_{\C}$.
         \item The symbols $(X,Y)$ are isomorphisms.
         \item For symbols $\alpha\colon X \to Y$ and $\beta\colon Z \to W$ in $\Mor_{\C}$, we have
         $(Y,W) \circ \alpha\otimes\beta = \beta\otimes\alpha \circ (X,Z)$.
         \item For three objects $X,Y,Z \in \Obj_{\C}$ we have the equality
         $(X, Y \otimes Z) = (Z\otimes X,Y) \circ (X \otimes Y,Z)$.
    \end{itemize}
 \end{itemize}
\end{cor}

\begin{proof}
 Apply \cref{thm:thh two cat} to the $2$-category $B\C$ with only one object.
\end{proof}

To conclude this appendix, we apply  \cref{thm:thh two cat} and \cref{cor:thh two cat on obj} to the explicit computation of some key examples.

\begin{prop} \label{prop:thh bn}
 $\ds\biHH(B\bN) \simeq \{0\} \coprod_{n \in \{1,2,3,...\}} \{n\} \times B\bZ.$
\end{prop}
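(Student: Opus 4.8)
The plan is to apply \cref{cor:thh two cat on obj} to the strict monoidal category $\C=\bN$ whose objects are the natural numbers, whose only morphisms are identities, and whose monoidal product is addition with unit $0$; then $B\bN$ is a strict $2$-category and \cref{cor:thh two cat on obj} supplies a presentation of $\biHH(B\bN)$ which we then simplify. Because every morphism of $\bN$ is an identity, the generating morphisms of the first kind are all identities, the relations expressing composition of such symbols and their compatibility with the swap maps become vacuous, and the relation identifying $(X,\text{unit})$ with an identity becomes the statement that $(m,0)\colon m\to m$ is the identity morphism $\rid_m$ of the object $m$. Hence $\biHH(B\bN)$ has object set $\bN$, and for each $k$ the only generating morphisms with source and target $k$ are the endomorphisms $(m,k-m)\colon k\to k$ for $0\le m\le k$, subject to: each $(m,k-m)$ is invertible; $(k,0)=\rid_k$; and, upon specializing $(X,Y\otimes Z)=(Z\otimes X,Y)\circ(X\otimes Y,Z)$ to triples $m,n,p\in\bN$ with $m+n+p=k$, the relations $(m,n+p)=(p+m,n)\circ(m+n,p)$.

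Since all generating morphisms are endomorphisms and every relation relates parallel morphisms, $\biHH(B\bN)$ splits as a coproduct $\coprod_{k\in\bN}BG_k$ of one-object groupoids, where $G_k:=\biHH(B\bN)(k,k)$ is a group (every generating endomorphism of $k$ being invertible) with the presentation just described. For $k=0$ the only generator is $(0,0)=\rid_0$, so $G_0$ is trivial and the component at $0$ is the terminal category $\{0\}$. For $k\ge 1$, I would write $a_m:=(m,k-m)$ for $0\le m\le k$, so $a_k=\rid_k$, and specialize the relation $(m,n+p)=(p+m,n)\circ(m+n,p)$ to $(m,n,p)=(i,1,k-1-i)$ for $0\le i\le k-1$, obtaining $a_i=a_{k-1}\circ a_{i+1}$. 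Iterating downward from $a_k=\rid_k$ then yields $a_i=a_{k-1}^{k-i}$ for all $0\le i\le k$, so $G_k$ is generated by the single element $a_{k-1}$.

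It remains to prove $G_k\cong\bZ$ for $k\ge 1$, i.e.\ that $a_{k-1}$ has infinite order. For this I would define a homomorphism $\psi\colon G_k\to\bZ$ on generators by $\psi(a_m)=k-m$ and check it respects every relation: $\psi(a_k)=0$, and whenever $m+n+p=k$ one has $\psi(a_{p+m})+\psi(a_{m+n})=(k-p-m)+(k-m-n)=2k-2m-(n+p)=k-m=\psi(a_m)$. Thus $\psi$ is well defined, and since $a_{k-1}$ generates $G_k$ while $\psi(a_{k-1})=1$ generates $\bZ$, the map $\psi$ is an isomorphism. Assembling the components gives $\biHH(B\bN)\simeq\{0\}\coprod_{n\in\{1,2,3,\dots\}}\{n\}\times B\bZ$.

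\textbf{Main obstacle.} The argument is largely bookkeeping once \cref{cor:thh two cat on obj} is available; the step I expect to require the most care is the last one, namely showing that the a priori infinite family of relations $(m,n+p)=(p+m,n)\circ(m+n,p)$ does not secretly impose a torsion relation on $a_{k-1}$. The homomorphism $\psi$ is precisely what rules this out, certifying that the normal closure of the relators contains no nontrivial power of $a_{k-1}$. One should also check that the degenerate instances of that relation, with $n=0$ or $p=0$, are vacuous — they each reduce to $a_m=a_m$ using $a_k=\rid_k$ — so that no hidden relations lurk there.
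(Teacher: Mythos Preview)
Your proof is correct and follows the same route as the paper: apply \cref{cor:thh two cat on obj} to the discrete strict monoidal category $\bN$, reduce the generators at each object $k$ to a single invertible element using the relation $(X,Y\otimes Z)=(Z\otimes X,Y)\circ(X\otimes Y,Z)$, and identify the resulting automorphism group. Your argument is in fact slightly more thorough than the paper's: the paper stops after observing that the endomorphism group of $k$ is generated by $(k-1,1)$ and simply asserts the result, whereas you explicitly construct the homomorphism $\psi\colon G_k\to\bZ$ with $\psi(a_m)=k-m$ to rule out torsion, which is exactly the step needed to justify that the component at $k\ge 1$ is $B\bZ$ rather than $B(\bZ/r)$ for some $r$.
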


\begin{proof}
 We apply \cref{cor:thh two cat on obj} to symmetric monoidal category $\bN$ with only identity morphisms. It has objects $\{0,1,2,...\}$ and no non-trivial morphisms. Moreover, for every $n,m \in \bN$ there is an isomorphism $(n,m)\colon n+m \to n+m$.
 
 Now, the last relation in \cref{cor:thh two cat on obj} implies that for all $m>1$
 $$(n,m)  = (n,(m -1) + 1 ) = (1+n, m - 1) \circ (n+ (m - 1) , 1).$$
 By induction, this implies that $(n,m) = (n + m - 1,1)^m$ and so every object $n>0$ has a unique automorphism $(n-1,1)$. This finishes the proof.
\end{proof}

\begin{prop}\label{prop:thh bnn} 
 $ \ds\biHH(B(\bN \ast \bN)) \simeq \{(0,0)\} \coprod_{(n,m) \in  \bN \times \bN \backslash \{(0,0)\}} \{(n,m)\} \times B\bZ.$
\end{prop}

\begin{proof}
 Again, we use \cref{cor:thh two cat on obj} for the symmetric monoidal category $B(\bN \ast \bN)$. According to the result the objects are isomorphic to the elements in $\bN \ast \bN$. However, for two elements $x,y \in \bN \ast \bN$ we have $xy\cong yx$ in the category $\biHH(B(\bN \ast \bN))$ and so it suffices to take one object from each isomorphism class, which correspond to the commutator classes and are precisely $\bN \prod \bN$. 
 
 Now for a given object $(n,m)$, an automorphism is given by a tuple $((n_1,m_1),(n_2,m_2))$ such that $n = n_1 + n_2$ and $m =m_1 + m_2 = m_2 + m_1$. If $n= 0$ or $m = 0$, then this reduces to \cref{prop:thh bn}, and it follows that there is a unique generating automorphism for all cases and no automorphism for the case $n = m = 0$.

 If $n, m \neq 0$, then the only elements that commute with $(n,m)$ are $((n,m),(0,0))$ and $((0,0),(n,m))$. The first is the identity, by \cref{cor:thh two cat on obj}, and so we again have one free automorphism and so the desired result follows.
\end{proof}

\begin{rmk}
 As $\bN$ is not a group, we could not have used \cref{ex:thh groupoids} to do the computation above. However, the group completion of $\bN$ is $\bZ$, so we can ask ourselves how the previous two results compare to the computation of  $\biHH(B\bZ)$ and  $\biHH(B(\bZ * \bZ))$.
 
 First, the free loop space of $B\bZ$ is equivalent to $\bZ \times B\bZ$, which admits an inclusion 
 \[\{0\} \coprod_{n \in \{1,2,3,...\}} \{n\} \times B\bZ \to \bZ \times B\bZ.\]
 On the other hand the free loop space on $B(\bZ *\bZ)$ is equivalent to the groupoid $\Fun(B\bZ,B(\bZ*\bZ))$, which has objects automorphisms of $B(\bZ*\bZ)$, i.e., $\bZ *\bZ$ and morphisms that are natural transformations, which are given by conjugation. Isomorphism classes of objects are given by conjugacy classes, which correspond to $\bZ \times \bZ$, while the automorphism group is  the centralizer, which for $(0,0)$ is  $\bZ \times \bZ$ and for any other object is  $\bZ$. There is again an inclusion 
 $$\ds\{(0,0)\} \coprod_{(n,m) \in \bN \times \bN \backslash \{(0,0)\}} \{(n,m)\} \times B\bZ \to B(\bZ *\bZ) \coprod_{(n,m) \in \bZ \times \bZ \backslash \{(0,0)\}}B\bZ.$$
 
 Neither inclusions is full, making it challenging to deduce $\biHH(B\bN)$ from $\biHH(B\bZ)$.
\end{rmk}

We now tackle a more complicated example.  Let $\DD_+$ be the category of finite ordinals and order-preserving morphisms, which can also be characterized as the category $\DD$ together with one additional initial object corresponding to the empty ordinal. It is a strictly monoidal category with monoidal structure given by disjoint union and unit given by the empty set.

Before computing $\biHH(B\DD_+)$, we need to review the paracyclic category, as defined in \cite[Example I.22]{dyckerhoffkapranov2015crossed}

 \begin{defn} \label{defn:paracyclic}
  Let $\Lambda_\infty$ be the paracyclic category, with objects $\{ 0,1,2, ... \}$ and morphisms 
  $n \to m$ given by linear functions $f\colon \bZ \to \bZ$ such that $f(l+m+1)= f(l) + n + 1$.
 \end{defn}
 
 The paracyclic category was introduced in \cite{lodayzbigniew1991paracyclicorigin}, but the name comes from \cite{getzlerjones1993paracyclicname}. We need the following concrete characterization of the category as described in \cite[Example I.28]{dyckerhoffkapranov2015crossed}.
 
 \begin{rmk} \label{rmk:paracyclic relations}
  The paracyclic category can be characterized via the following generators and relations.
  \begin{itemize}
      \item Objects $\{[0],[1],[2],...\}$
      \item Generating morphisms:
      \begin{itemize}
          \item For $n \geq 1$ and $0 \leq i \leq n$, $d^i\colon [n-1] \to [n]$
          \item For $n \geq 0$ and $0 \leq i \leq n$, $s^i\colon [n+1] \to [n]$
          \item For $n \geq 0$, $t^n\colon [n] \to [n]$
      \end{itemize}
      \item $s^i$ and $d^i$ satisfy the cosimplicial relations and additionally we have for $n \geq 1$
      \begin{itemize}
          \item $t^nd^i=d^{i-1}t^{n-1}$ where $i>0$
          \item $t^nd^0 = d^n$
          \item $t^ns^i= s^{i-1}t^{n+1}$, where $i>0$
          \item $t^ns^0 = s^n(t^{n+1})^2$
      \end{itemize}
  \end{itemize}
 \end{rmk}

 \begin{thm} \label{thm:thh paracyclic}
  $\biHH(B\DD_+) \simeq (\Lambda_\infty)^\lhd.$
 \end{thm}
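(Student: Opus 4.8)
The plan is to feed the one-object strict $2$-category $B\DD_+$ — where $\DD_+$ is the augmented simplex category equipped with its strict monoidal structure given by ordinal sum $\oplus$ (so $[a]\oplus[b]=[a+b+1]$, with unit the empty ordinal $[-1]$) — into the explicit presentation of \cref{cor:thh two cat on obj}, and then to match the resulting generators and relations with the generators and relations of the paracyclic category recorded in \cref{rmk:paracyclic relations}. By \cref{cor:thh two cat on obj}, $\biHH(B\DD_+)$ has object set $\Obj_{\DD_+}=\{[-1],[0],[1],\dots\}$, is generated by the order-preserving maps of $\DD_+$ together with braidings $([a],[b])\colon [a+b+1]\to[a+b+1]$, and is subject to relations (1)--(5) of that corollary. (Conceptually this is the statement that the cyclic bar construction of the walking monoid is the paracyclic category, in the spirit of \cite{dyckerhoffkapranov2015crossed}; the proof below is the hands-on counterpart, analogous to \cref{prop:thh bn}.)

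I would begin with two structural observations. First, relation (5) forces every braiding on $[n]$ to be a power of the single automorphism $\tau_n:=([n-1],[0])$, precisely $([j-1],[n-j])=\tau_n^{\,n+1-j}$ for $0\le j\le n+1$; in particular $([n],[-1])=\tau_n^{\,0}=\rid_{[n]}$, consistent with relation (1). Second, combining relation (1) with relation (4) applied to the unique maps $\iota_a\colon[-1]\to[a]$ out of the initial object of $\DD_+$ gives $([a],[b])\circ\iota_{a+b+1}=\iota_{a+b+1}$, and an induction on word length then shows $g\circ\iota_m=\iota_n$ for every morphism $g\colon[m]\to[n]$ of $\biHH(B\DD_+)$ with $m\ge 0$; since moreover there is no morphism $[n]\to[-1]$ for $n\ge 0$ and $\End([-1])=\{\rid\}$, the object $[-1]$ is initial in $\biHH(B\DD_+)$. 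Hence $\biHH(B\DD_+)\cong\mathcal C^{\lhd}$, where $\mathcal C$ is the full subcategory on $\{[n]\}_{n\ge 0}$, and it remains to identify $\mathcal C$ with $\Lambda_\infty$.

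To do this I would construct mutually inverse functors $\Phi\colon\biHH(B\DD_+)\to(\Lambda_\infty)^{\lhd}$ and $\Psi\colon(\Lambda_\infty)^{\lhd}\to\biHH(B\DD_+)$, specified on generators. For $\Phi$: send $[-1]$ to the adjoined initial object $\bot$ and $[n]$ to $[n]$; send an order-preserving map of nonempty ordinals to itself via $\DD\hookrightarrow\Lambda_\infty$, send each $\iota_a$ to the unique map $\bot\to[a]$, and send $([a],[b])$ to $(t^{a+b+1})^{\pm(b+1)}\in\Aut([a+b+1])$, the sign of the exponent being fixed once and for all by matching a single relation (comparing $\tau_n\circ\delta^n$ with $t^n d^0=d^n$). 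For $\Psi$: send $\bot$ to $[-1]$, $[n]$ to $[n]$, $\delta^i,\sigma^i$ to themselves via $\DD\subset\DD_+$, and $t^n$ to $\tau_n$ (or $\tau_n^{-1}$, matching the sign above). One then checks that $\Phi$ carries each of relations (1)--(5) to an identity in $(\Lambda_\infty)^{\lhd}$: (1) and (3) are immediate, (2) is functoriality of $\DD\hookrightarrow\Lambda_\infty$ together with initiality of $\bot$, and (5) becomes $(t^n)^{b+c+2}=(t^n)^{b+1}\circ(t^n)^{c+1}$, valid since $\Aut_{\Lambda_\infty}([n])$ is abelian. Symmetrically, $\Psi$ must carry each relation of \cref{rmk:paracyclic relations} to an identity in $\biHH(B\DD_+)$: the cosimplicial relations hold already in $\DD$, the relations governing $\bot$ use the lemma $g\circ\iota_m=\iota_n$ above, and each relation involving $t$ is an instance of relation (4) after writing $\delta^i$ and $\sigma^i$ as ordinal sums of the basic generators $\iota_{[0]}\colon[-1]\to[0]$, $\sigma^0\colon[1]\to[0]$ and identities. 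Finally $\Phi\Psi$ and $\Psi\Phi$ are the identity on generators; the only step needing the earlier work is $\Psi\Phi(([a],[b]))=\tau_{a+b+1}^{\,b+1}=([a],[b])$, which is exactly the consequence $([j-1],[n-j])=\tau_n^{\,n+1-j}$ of relation (5).

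The main obstacle is precisely the verification that relation (4) of \cref{cor:thh two cat on obj} matches the paracyclic relations of \cref{rmk:paracyclic relations}. The useful reduction is that relation (4) asserts the naturality of the braiding $(A,B)\colon A\oplus B\to B\oplus A$ with respect to the bifunctor $\oplus$, so it holds for all pairs $(\phi,\psi)$ once it is known for $\phi,\psi\in\{\rid,\delta^i,\sigma^i\}$; dually, the paracyclic relations are already given on such generators. It then remains to line up, instance by instance, the powers of $\tau$ (equivalently of $t$) and the $n$-versus-$(n+1)$ index shifts: one finds $\tau_n\circ\delta^i=\delta^{i+1}\circ\tau_{n-1}$ for $i<n$ and $\tau_n\circ\delta^n=\delta^0$, matching $t^n d^i=d^{i-1}t^{n-1}$ and $t^n d^0=d^n$; $\tau_n\circ\sigma^i=\sigma^{i+1}\circ\tau_{n+1}$ for $i<n$ and $\tau_n\circ\sigma^n=\sigma^0\circ\tau_{n+1}^{2}$, matching $t^n s^i=s^{i-1}t^{n+1}$ and $t^n s^0=s^n(t^{n+1})^2$ (the squared term being exactly the braiding $([n-1],[1])=\tau_{n+1}^{2}$), together with the boundary cases where a summand is $[-1]$. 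This is finite but index-heavy; once carried out, $\Phi$ and $\Psi$ are well defined and mutually inverse, giving $\biHH(B\DD_+)\cong(\Lambda_\infty)^{\lhd}$, hence the asserted equivalence.
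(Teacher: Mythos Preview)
Your proposal is correct and follows essentially the same approach as the paper: feed $B\DD_+$ into the presentation of \cref{cor:thh two cat on obj}, reduce all braidings to powers of a single generator $\tau_n=([n-1],[0])$ via relation~(5) (exactly as in \cref{prop:thh bn}), and then match the interaction of $\tau_n$ with the cosimplicial generators against the paracyclic relations of \cref{rmk:paracyclic relations}. The paper carries out the last step by directly computing $t^n s^0$, $t^n s^i$, $t^n d^0$, $t^n d^i$ one at a time using relation~(4), and handles initiality of the empty ordinal at the very end with a one-line check; you package the same computations as the well-definedness of explicit inverse functors $\Phi,\Psi$ and give a slightly more careful argument for initiality, but the substance is identical.
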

 
 \begin{proof}
 It suffices to prove that $\biHH(B\DD_+)$ has the same presentation as $(\Lambda_\infty)^\lhd$ described in \cref{rmk:paracyclic relations}. By \cref{cor:thh two cat on obj}, the set of objects of $\biHH(B\DD_+)$ is $\bN$. The monoidal structure is the same as for $B\bN$, thus following \cref{prop:thh bn}, there is a unique isomorphism $t^n\colon n \to n$ for every $n>0$. 
 
 By \cref{cor:thh two cat on obj} the morphisms are generated by the $d^i,s^i$, and $t^i$, where the interaction of $s^i$ and $d^i$ is given by the cosimplicial relations. What remains is to check how the $s^i$ and $d^i$ interact with the $t^i$.

 We start with $s^0\colon[n+1] \to [n]$. We have 
 $$t^ns^0 = ([n-1],[0]) \circ s^0 = s^0 ([n-1],[1]) = s^0([n],[0])\circ ([n],[0]) = s^0(t^{n+1})^2.$$
If $s^i\colon [n+1] \to [n]$, where $i>0$, then $s^i = \rid_{[0]} \coprod s^{i-1}\colon [0] \coprod [n-1] \to [0] \coprod [n]$, whence 
 \[t^ns^i = ([n-1],[0]) \circ (\rid_{[0]} \coprod s^{i-1}) = (\rid_{[0]} \coprod s^{i-1} ([n],[0]) = s^{i-1}t^{n+1}. \]
 Consider now $d^i$. If $i_0\colon \emptyset \to [0]$ is the unique map, then $d^0= i_0 \coprod \rid_{[n-1]}\colon [n-1] \to [n]$, whence
 $$t^nd^0 = ([n-1],[0]) \circ i_0 \coprod \rid_{[n-1]} = \rid_{[n-1]} \coprod i_0 ([n],\emptyset) = d^n.$$
 For $i>0$, $d^i= \rid_{[0]} \coprod d^{i-1}\colon [0] \coprod [n-1] \to [0] \coprod [n]$, and so 
 $$t^nd^i= ([n-1],[0])\rid_{[0]} \coprod d^{i-1} = d^{i-1} \coprod \rid_{[0]} ([n-2],[0]) = d^{i-1}t^{n-1}.$$
 
 Finally we need to confirm that $\emptyset$ is still the initial object.  Since
 \[t^0i_0=(\emptyset,[0]) i_0=i_0(\emptyset,\emptyset)=i_0,\]
it follows that $\emptyset$ is still initial. This confirms all the relations, and hence we are done.
 \end{proof}
  
  We can now use this result to establish the main result of interest, which requires us to review some concepts regarding the free adjunction category $\Adj$. 
  
  \begin{rmk} \label{rmk:adj}
  The {\it free adjunction bicategory} $\Adj$ (\cref{def:adj}) can be described as follows. It has two objects $0,1$, and the following morphisms.
  \begin{itemize}
      \item $\Adj(0,0)= \DD_+$, where we think of $[n]$ as the morphism $(GF)^n$.
      \item $\Adj(1,1) = (\DD_+)^{op}$, the elements of which we denote by $[n]^{op}$ to distinguish them from the previous item and think of as $(FG)^n$. 
      \item $\Adj(0,1) = \DD_{max}$, the wide subcategory of $\DD$ consisting of morphisms in $\DD$ that preserve the maximum, the elements of which we denote by $[n]_{max}$ and think of as $(FG)^nF$
      \item $\Adj(0,1) = \DD_{min}$, the wide subcategory of $\DD$ consisting of morphisms in $\DD$ that preserve the minimum, with elements denoted $[n]_{min}$ and think of as $(GF)^nG$.
  \end{itemize}
 \end{rmk}
 For more details, see the original description of the free adjunction in \cite{schanuelstreet1986freeadj}.
  
  The full subcategory of $\Adj$ with unique object $0$ is the free monad  $2$-category, denoted by $\Mon$, while the full subcategory of $\Adj$ with unique object $1$ is the free comonad $2$-category, denoted $\CoMon$. We now compute $\biHH$ of each of these 2-categories.
  
 \begin{thm} \label{thm:thh adj}
  There is a diagram of equivalences of categories
  \begin{center}
      \begin{tikzcd}
        \biHH(\Mon) \arrow[r, hookrightarrow] & \biHH(\Adj) \arrow[r, hookleftarrow] & \biHH(\CoMon)  \\
        (\Lambda_\infty)^{\lhd} \arrow[r, hookrightarrow] \arrow[u, "\simeq"] & (\Lambda_\infty)^{\lhd\rhd} \arrow[u, "\simeq"]  \arrow[r, hookleftarrow] & (\Lambda_\infty^{op})^{\rhd} \arrow[u, "\simeq"]  
      \end{tikzcd}
  \end{center}
 \end{thm}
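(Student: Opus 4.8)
The plan is to reduce the two outer vertical equivalences to \cref{thm:thh paracyclic} and to obtain the middle one by computing $\biHH(\Adj)$ directly from the presentation in \cref{thm:thh two cat}. For the outer equivalences: by \cref{rmk:adj}, $\Mon$ is the full sub-$2$-category of $\Adj$ on the object $0$, so $\Mon\cong B\DD_+$ and \cref{thm:thh paracyclic} gives $\biHH(\Mon)\simeq(\Lambda_\infty)^\lhd$; similarly $\CoMon$ is the full sub-$2$-category on $1$, with hom-category $(\DD_+)^{op}$ and composition still the ordinal sum, and rerunning the argument of \cref{thm:thh paracyclic} with $(\DD_+)^{op}$ in place of $\DD_+$ --- equivalently, using that the presentation in \cref{cor:thh two cat on obj} is self-dual under reversing every generator $\alpha$ and every generator $(X,Y)$ --- yields $\biHH(\CoMon)\simeq\big((\Lambda_\infty)^\lhd\big)^{op}=(\Lambda_\infty^{op})^\rhd$. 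So the real content is the computation of $\biHH(\Adj)$ and the compatibility of the two horizontal inclusions.

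To compute $\biHH(\Adj)$ I would apply \cref{thm:thh two cat} to the strict $2$-category $\Adj$, feeding in the Schanuel--Street description of \cref{rmk:adj}. On objects, $\coprod_{X\in\{0,1\}}\Obj_{\Adj(X,X)}$ is two copies of $\Obj(\DD_+)$, which we read as $\{(GF)^n : n\ge 0\}$ and $\{(FG)^n : n\ge 0\}$. Evaluating the generating isomorphisms $(F',G')\colon F'\ast G'\to G'\ast F'$ of \cref{thm:thh two cat} on $F'=[n]_{max}$ (a $1$-cell $0\to 1$) and $G'=[m]_{min}$ (a $1$-cell $1\to 0$) gives isomorphisms $(GF)^{n+m+1}\cong(FG)^{n+m+1}$; letting $n,m$ range over $\bN$ identifies the two copies in all positive degrees, while the units $U_0=(GF)^0$ and $U_1=(FG)^0$ stay distinct, since no pair of $1$-cells of the free adjunction composes to an identity. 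Hence $\Obj_{\biHH(\Adj)}\cong\bN\sqcup\{U_0,U_1\}=\Obj\big((\Lambda_\infty)^{\lhd\rhd}\big)$, with $U_0$ destined to be the initial cone point and $U_1$ the terminal one.

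For the morphisms I would sort the generators of \cref{thm:thh two cat} into three families: the symbols $\alpha$ coming from $\Mor_{\Adj(0,0)}=\Mor_{\DD_+}$ and $\Mor_{\Adj(1,1)}=\Mor_{(\DD_+)^{op}}$; the symbols $(F,G)$ with $F,G$ endomorphisms of a single object, which --- exactly as in the proof of \cref{thm:thh paracyclic} --- produce the paracyclic operators $t^i$; and the symbols $(F,G)$ with $F\colon 0\to 1$ and $G\colon 1\to 0$, which supply the identifications above together with the morphisms in and out of $U_0$ and $U_1$. Working through relations (1)--(5) of \cref{thm:thh two cat} in this case, one should find that (i) the full subcategory on $\{(GF)^n : n\ge 1\}$ carries exactly the paracyclic presentation of \cref{rmk:paracyclic relations} --- in particular the $(\DD_+)^{op}$-generators, transported across $(GF)^n\cong(FG)^n$, become expressible through the $\DD_+$-generators and the $t^i$, which is precisely the phenomenon that makes $\biHH(B\DD_+)$ exactly $(\Lambda_\infty)^\lhd$ rather than anything larger; (ii) $U_0$ is initial in $\biHH(\Adj)$, being already initial in $\Mon$ and receiving no new incoming morphism from the remaining generators; and dually (iii) $U_1$ is terminal. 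Together these give $\biHH(\Adj)\simeq(\Lambda_\infty)^{\lhd\rhd}$.

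Finally, functoriality of $\biHH$ for $\cat$-enrichments (\cref{rem:thh bicat functorial}) applied to the inclusions $\Mon\hookrightarrow\Adj\hookleftarrow\CoMon$ produces the horizontal functors; tracing the identifications above shows that they are the evident inclusions $(\Lambda_\infty)^\lhd\hookrightarrow(\Lambda_\infty)^{\lhd\rhd}\hookleftarrow(\Lambda_\infty^{op})^\rhd$, the right-hand one realising the self-duality $\Lambda_\infty\cong\Lambda_\infty^{op}$ concretely via the assignment that sends a $(\DD_+)^{op}$-morphism to its image under $(GF)^n\cong(FG)^n$. Full faithfulness of both inclusions follows by comparing the presentations --- there are no morphisms between the remaining objects routing through $U_0$ or $U_1$, since the former is initial and the latter terminal --- and the two squares commute by construction. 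I expect the main obstacle to be the morphism analysis (i)--(iii): keeping the Schanuel--Street combinatorics of the four hom-categories $\DD_+$, $(\DD_+)^{op}$, $\DD_{max}$, $\DD_{min}$ under control inside the generators-and-relations machine of \cref{thm:thh two cat}, and above all verifying the ``no new morphisms'' claim, namely that the $(\DD_+)^{op}$-generators and the mixed symbols $(F,G)$ contribute exactly the paracyclic morphisms and the two cone structures, and nothing more.
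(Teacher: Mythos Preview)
Your proposal is correct and follows essentially the same strategy as the paper: reduce the outer columns to \cref{thm:thh paracyclic}, then compute $\biHH(\Adj)$ from the presentation of \cref{thm:thh two cat} fed with the Schanuel--Street description of \cref{rmk:adj}. The only organizational difference is that the paper keeps both copies of objects $[n]$ and $[n]^{op}$, writes down a single generating isomorphism $c^n\colon [n]\to[n]^{op}$ (after reducing all the mixed symbols $([n]_{min},[m]_{max})$ to $([n+m]_{min},[0]_{max})$ times a power of $t$), verifies the commutation relations $c^n d^i=(d^i)^{op}c^n$, $c^n s^i=(s^i)^{op}c^n$, $c^n t^n=(t^n)^{op}c^n$, and then recognizes the result as the pseudo-pushout $(\Lambda_\infty)^\lhd\amalg_{\Lambda_\infty}(\Lambda_\infty^{op})^\rhd\simeq(\Lambda_\infty)^{\lhd\rhd}$ --- which is exactly your step (i) phrased dually, and makes precise the ``no new morphisms'' claim you flagged as the main obstacle.
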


 \begin{proof}
  The two equivalences on the left and right side follow easily from \cref{thm:thh paracyclic}. It remains to prove that the middle map is an equivalence and that the squares commute.
  
  Following \cref{thm:thh two cat} and the explicit description of the $2$-category $\Adj$ given above in \cref{rmk:adj}, the category $\biHH(\Adj)$ admits the following description.
  \begin{itemize}
      \item Objects: two copies of $\bN$, which we denote $\emptyset,[0],[1], \cdots$ and $\emptyset^{op},[0]^{op},[1]^{op},\cdots$.
      \item Morphisms: $\sigma\colon [m] \to [n]$, $\sigma^{op}\colon[n]^{op} \to [m]^{op}$ for all $\sigma \in \DD([m],[n])$. 
      \item Isomorphisms: $([n],[m])\colon [n+m+1] \to [n+m+1]$, $([n]^{op},[m]^{op})\colon [n+m+1]^{op} \to [n+m+1]^{op}$ for objects $n,m \in \bN$.
      \item Isomorphisms: 
      $([n]_{min},[m]_{max})\colon[n+m+1] \to [n+m+1]^{op}$ and 
      $([m]_{max},[n]_{min})\colon[n+m+1]^{op} \to [n+m+1]$, where $n,m \in \bN$.
      \item Relations between the morphisms described in \cref{thm:thh two cat}.
  \end{itemize}
  We now use the various relations to reduce the structure and obtain the desired result.
  
  First, if we restrict to the objects $[0], [1], \cdots$, then we have exactly the generating morphisms and relations of $\DD_+$ and thus by \cref{thm:thh paracyclic}, obtain a copy of $(\Lambda_\infty)^{\lhd}$. Similarly, the generating morphisms and isomorphisms restricted to $[0]^{op},[1]^{op},\cdots$ gives rise to a copy of $(\Lambda_\infty^{op})^{\rhd}$. What remains is to explain how the additional isomorphisms $([n]_{min},[m]_{max})$ and     $([m]_{max},[n]_{min})$ influence $\biHH(\Adj)$.
  
  If $n,m \in \bN$, then the last relation in \cref{thm:thh two cat} implies that 
  \begin{align*}
    ([n]_{min},[m]_{max}) & =  ([n]_{min},[m-1]_{max}[0]^{op}) \\
    & = ([0]^{op}[n]_{min},[m-1]_{max}) \circ ([n+m],[0]) \\
    & = ([n+1]_{min},[m-1]_{max}) \circ ([n+m],[0])  
  \end{align*}
  and so, by induction,
  $$([n]_{min},[m]_{max}) = ([n+m]_{min},[0]_{max}) \circ ([n+m],[0])^m$$
  and similarly 
  $$([n]_{max},[m]_{min}) = ([n+m],[0])^m \circ ([n+m]_{max},[0]_{min}).$$
  All isomorphisms are therefore expressed as composition of the isomorphisms $([n]_{min},[0]_{max})\colon [n+1] \to [n+1]^{op}$ and $([n]_{max},[0]_{min})\colon[n+1]^{op} \to [n+1]$, for all $n \in \bN$. 
  
  Finally, again by the last relation in \cref{thm:thh two cat},  
  \[([n],[0])^2 = ([n-1],[1]) = ([n]_{max},[0]_{min}) \circ ([n]_{min},[0]_{max}).\]
There is therefore a simplified explicit description of $\biHH(\Adj)$, formulated as follows.
  \begin{itemize}
      \item Objects: $[0],[1],...$ and $[0]^{op},[1]^{op},...$
      \item Generating morphisms:
      \begin{itemize}
          \item $d^i\colon[n] \to [n+1],s^i\colon[n] \to [n-1],t^n\colon[n] \to [n]$
          \item $(d^i)^{op}\colon[n]^{op} \to [n+1]^{op},(s^i)^{op}\colon[n]^{op} \to [n-1]^{op}, (t^n)^{op}\colon [n]^{op} \to [n]^{op}$
          \item $c^n\colon[n] \to [n]^{op}$
      \end{itemize}
      \item Subject to relations:
      \begin{itemize}
          \item $d^i, s^i,t^n$ satisfy the paracyclic relations (\cref{rmk:paracyclic relations})
          \item $(d^i)^{op},(s^i)^{op},(t^n)^{op}$ satisfy the opposite paracyclic relations 
          \item $c^nt^n = (t^n)^{op}c^n$ as 
           $$([n]_{min},[0]_{max}) \circ ([n],[0]) =  ([n]_{min},[1]_{max})=([n]^{op},[0]^{op})\circ ([n]_{min},[0]_{max})$$
          \item $c^nd^i = (d^i)^{op}c^n$, which follows directly from the fourth relation in \cref{thm:thh two cat},
          \item $c^ns^i = (d^i)^{op}c^n$.
      \end{itemize}
  \end{itemize}
  
  The description of $\biHH(\Adj)$ as two copies of $\Lambda_{\infty}$ and $(\Lambda_\infty)^{op}$ and an additional isomorphism $c_n$ is just an explicit description of the following pseudo-pushout:
  \begin{center}
      \begin{tikzcd}
        \Lambda_\infty \arrow[r] \arrow[d] & (\Lambda_\infty)^{\lhd} \arrow[d] \\
        ((\Lambda_\infty)^{op})^{\rhd} \arrow[r] & \biHH(\Adj)
      \end{tikzcd}
  \end{center}
  which is equivalent to $(\Lambda_\infty)^{\lhd\rhd}$. By construction the squares in the diagram in the statement of the theorem commute.
 \end{proof}
 
 \begin{rmk}
In recent work Ayala and Francis have (independently) computed $\biHH(\Adj)$ \cite{ayalafrancis2021tracesthh} using methods from factorization homology \cite{ayalafrancis2017thhfactorization}. 
\end{rmk}

We now make one last computation, also necessary for our work on Morita invariance. For this last theorem we rely on a number of computations throughout this section. 

\begin{thm} \label{thm:thh adjend}
 $\ds\biHH(\AdjEnd) \simeq (\Lambda_\infty)^{\lhd\rhd} \times \{0\} \coprod_{n \in \{1,2,3,...\}} B\bZ \times \DD_+ \times \{n\}$
\end{thm}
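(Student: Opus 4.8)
The plan is to compute $\biHH(\AdjEnd)$ from the explicit generators-and-relations description of Hochschild homology of a strict $2$-category furnished by \cref{thm:thh two cat}, organising the computation by a grading coming from the free endomorphism $q$.

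First I would record the $2$-categorical structure of $\AdjEnd$ (\cref{def:adjend}). Since $\AdjEnd$ is $\Adj$ with a single $1$-cell $q\colon 0\to 0$ freely adjoined and no new $2$-cells — equivalently the strict pushout $\Adj\sqcup_{[0]}B\bN$ along the object $0$ — its hom-categories are the free monoidal categories generated under horizontal composition by the hom-categories of $\Adj$ together with the object $q$. Thus, using \cref{rmk:adj}, a $1$-cell $0\to 0$ is a word alternating runs of $q$'s with runs of $GF$'s, a $1$-cell $1\to 1$ the corresponding word in $FG$'s and $Fq^{k}G$'s, and there are no nonidentity $2$-cells touching a $q$. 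Consequently the total number of occurrences of $q$ in a $1$-cell is a nonnegative integer preserved by horizontal and vertical composition and by the generating $2$-cells $u,c$ (which are $q$-free); inspecting the generators of \cref{thm:thh two cat} — the symbols $\alpha$ coming from $2$-cells of $\AdjEnd(X,X)$ and the rotation symbols $(F,G)$ — each respects this count, so $\biHH(\AdjEnd)$ decomposes as a coproduct $\coprod_{n\ge 0}\biHH(\AdjEnd)_n$ of full subcategories, one per value of $n$.

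For $n=0$: the full sub-$2$-category of $\AdjEnd$ on the $q$-free cells is exactly $\Adj$ (any $2$-cell with $q$-free boundary is a pasting of $u$ and $c$, hence lives in $\Adj$), and the degree-$0$ part of the presentation of \cref{thm:thh two cat} is verbatim the presentation of $\biHH(\Adj)$; hence $\biHH(\AdjEnd)_0\simeq\biHH(\Adj)\simeq(\Lambda_\infty)^{\lhd\rhd}$ by \cref{thm:thh adj}. This yields the summand $(\Lambda_\infty)^{\lhd\rhd}\times\{0\}$. For $n\ge 1$ the claim is $\biHH(\AdjEnd)_n\simeq B\bZ\times\DD_+$, and this is where the work lies. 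I would treat this as the ``free monad'' decoration of the computation $\biHH(B\bN)\simeq\{0\}\coprod_{n\ge1}\{n\}\times B\bZ$ of \cref{prop:thh bn}: after using the rotation isomorphisms $(F,G)$ and relations (1), (3), (5) of \cref{thm:thh two cat} to normalise objects, a degree-$n$ object is encoded by a cyclically ordered $n$-tuple of objects of $\Adj(0,0)=\DD_+$ (the runs of $GF$'s lying between consecutive $q$'s), with morphisms generated by the rotational isomorphisms and by $\DD_+$-morphisms applied arc by arc. One then constructs the comparison functor $\biHH(\AdjEnd)_n\to B\bZ\times\DD_+$ whose $\DD_+$-component is the ``total run of $GF$'s'' (the ordinal sum of the $n$ entries, together with the ordinal sum of any arc-wise family of $\DD_+$-morphisms) and whose $B\bZ$-component records the winding of the cyclic structure of the $n$ copies of $q$, exactly as in the proof of \cref{prop:thh bn} (note $(U_0,X)\ne\rid_X$ is what produces the $\bZ$). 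Essential surjectivity of this functor is immediate ($q^{n}(GF)^{k}$ realises $[k-1]\in\DD_+$ and $q^{n}$ realises the initial object $\emptyset$), and full faithfulness reduces to matching the $\Hom$-sets computed from \cref{thm:thh two cat} with $\bZ\times\DD_+(-,-)$, using initiality of $\emptyset$ in $\DD_+$ to fill empty runs uniquely. Assembling the two families of summands gives the statement.

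The main obstacle is the $n\ge1$ case: one must extract from \cref{thm:thh two cat} the precise morphism structure of $\biHH(\AdjEnd)_n$ — in particular how the rotation isomorphisms interact with the arc-wise $\DD_+$-morphisms and with the internal cyclic symmetries of an object — and verify that the ``total $GF$-run'' assignment is genuinely functorial and independent of where a cyclic word is cut. The grading reduction and the $n=0$ identification are then formal consequences of \cref{thm:thh two cat} and \cref{thm:thh adj}, so essentially all of the weight of the proof sits in this decorated analogue of \cref{prop:thh bn}.
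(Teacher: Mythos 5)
Your proposal follows essentially the same route as the paper: grade $\biHH(\AdjEnd)$ by the number of occurrences of $q$, identify the degree-$0$ piece with $\biHH(\Adj)\simeq(\Lambda_\infty)^{\lhd\rhd}$ via \cref{thm:thh adj}, and for $n\ge 1$ combine the automorphism analysis of \cref{prop:thh bnn} with the arc-wise $\DD_+$-morphism structure to obtain $B\bZ\times\DD_+$. The ``main obstacle'' you flag is resolved in the paper by a short computation using relation (4) of \cref{thm:thh two cat}, showing that the generating automorphism $(\emptyset 0,[n]k)$ commutes with every morphism $\sigma k$, which is precisely what splits off the $B\bZ$ factor as a direct product.
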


\begin{proof}
 Let us start by reducing the set of objects, so that no two objects are isomorphic.
 Following \cref{rmk:morphism in adjbn} and \cref{thm:thh two cat}, the set of objects in $\biHH(\AdjEnd)$ admits a bijection with 
 \begin{align*}
     \Obj_{\AdjEnd(0,0)}  \coprod \Obj_{\AdjEnd(1,1)}& \cong (\bN \ast \bN) \coprod (\bN \ast \bN) \\
     & = (\{\emptyset, [0],...\} \ast \{0,1,...\}) \coprod (\{\emptyset^{op},[0]^{op},...\} \ast \{0,1,...\}),
 \end{align*}
 where we are using \cref{rmk:adj}.
  
If $x,y \in \{\emptyset, [0],...\} \ast \{0,1,...\}$, then \cref{thm:thh two cat} implies that there is an isomorphism $(x,y)\colon xy \to yx$, whence the isomorphism classes are the commutators classes of the free words (as already explained in \cref{prop:thh bnn}). Isomorphism classes of objects of $\biHH(\AdjEnd)$ can thus in our first step be reduced to 
 \begin{align*}
     \bN \times \bN \coprod \bN \times \bN & =  \{\emptyset, [0],...\} \times \{0,1,...\} \coprod \{\emptyset^{op},[0]^{op},...\} \times \{0,1,...\} \\
     & \cong \{[n]k,[n]^{op}k\colon [n] \in \Obj_{\DD},k \in \bN\}
 \end{align*}
 
 We make one further reduction of the set of objects. 
By the same argument as in \cref{thm:thh adj} (and again using the notation in \cref{rmk:adj}), it follows that 
 $$([n-1]_{min},[0]_{max})k\colon[n]k \to [n]^{op}k$$
 is an isomorphism. Hence, we can conclude that the isomorphism classes of objects are precisely given by 
 $$\bN \times \bN \cong \{[n]k\colon [n] \in \Obj_{\DD_+}, k \in \bN\}.$$  
 
Now that we have determined the objects, we can move on to determining the generating morphisms and isomorphisms, again using \cref{thm:thh two cat}.

By the explanation in \cref{rmk:morphism in adjbn}, morphisms in $\AdjEnd(0,0)$ are given by words of morphisms in $\DD_+$, so there is a morphism from $[n_1]m_1$ to $[n_2]m_2$ if and only if $m_1 = m_2$, whence 
\begin{equation}\label{eq:mor}
  \Mor_{\AdjEnd(0,0)} = \{\sigma k\colon[n]k \to [m]k\colon \sigma \in \DD_+([n],[m]), k \in \bN\}  
\end{equation}
 
On the other hand, the monoidal structure on $\AdjEnd(0,0)$ coincides with that on $B(\bN\ast\bN)$, whence, as explained in \cref{prop:thh bnn}, every object $[n]k$ has a unique generating non-trivial automorphism, i.e., $\Aut_{\biHH(\AdjEnd)}([n]k) = \bZ$.  

In order to finish the proof, we need to understand the interaction between the automorphisms and morphisms, using the relations given in \cref{thm:thh two cat}. Let $\biHH(\AdjEnd)_k$ be the full subcategory of $\biHH(\AdjEnd)$ consisting of objects of the form $[n]k$. The explanation in \eqref{eq:mor} implies that 
$$\biHH(\AdjEnd) \cong \coprod_{k \in \bN} \biHH(\AdjEnd)_k.$$
We can therefore break our analysis down into the different $\biHH(\AdjEnd)_k$.

Let us start with $k=0$. In that case there are morphisms $\sigma 0\colon [n]0 \to [m]0$, for $\sigma\colon [n] \to [m]$ in $\DD_+$, and automorphisms $t^n\colon [n]0 \to [n]0$, which, by the relations given in \cref{thm:thh adj}, interact precisely as stated in  \cref{thm:thh adj}, whence 
$$\biHH(\AdjEnd)_0 \simeq (\Lambda_\infty)^{\lhd\rhd}.$$

If $k >0$, then the generating isomorphism of $[n]k$ is given by the symbol $(\emptyset0,[n]k)$, which, by \cref{thm:thh two cat}, satisfies the equality
$$(\emptyset0,[n]k) = ([n]0,\emptyset k) \circ (\emptyset k,[n]0),$$
where $([n]0,\emptyset k)\colon [n]k \to k[n]$ is a twisting isomorphism, and $(\emptyset k,[n]0)$ is defined similarly.
For an arbitrary morphism $\sigma k\colon [n]k \to [m]k$ in $\biHH(\AdjEnd)_k$, where $\sigma\colon[n] \to [m]$,  
\begin{align*}
  (\emptyset0,[m]k)\circ \sigma k & =
 ([m]0,\emptyset k) \circ (\emptyset k,[m]0) \circ \sigma \\
 & =  ([m]0,\emptyset k) \circ k \sigma \circ (\emptyset k,[n]0) \\
 & =
 \sigma k \circ ([n]0,\emptyset k) \circ (\emptyset k,[n]0) \\
 & =
 \sigma k \circ (\emptyset0,[n]k).  
\end{align*}
 The third equality above is a consequence of the fact that $m \neq 0$. It follows that $(\emptyset0,[n]k)$ commutes with all morphisms, proving the desired equivalence 
 $$\biHH(\AdjEnd)_m \simeq B\bZ \times \DD_+,$$
 and finishing the proof.
\end{proof}

\bibliographystyle{alpha}
\bibliography{main.bib}

\end{document}